\makeatletter \@addtoreset{equation}{section} \makeatother
\renewcommand\thetable{\thesection.\@arabic\c@table}
\theoremstyle{plain}
\newtheorem{maintheorem}{Theorem}
\newtheorem{maincorollary}{Corollary}
\newtheorem{mainproposition}{Proposition}
\newtheorem{theorem}{Theorem }[section]
\newtheorem{proposition}[theorem]{Proposition}
\newtheorem{lemma}[theorem]{Lemma}
\newtheorem{corollary}[theorem]{Corollary}
\theoremstyle{definition} \theoremstyle{remark}
\newtheorem{remark}[theorem]{Remark}
\newtheorem{definition}[theorem]{Definition}
\newcommand{\be} {\beta}        
    \newcommand{\Ga}{\Gamma}
\newcommand{\de} {\delta}       
\newcommand{\vep}{\varepsilon}
\newcommand{\dist}{\operatorname{dist}}
\newcommand{\topp}{\operatorname{top}}
\newcommand{\var}{\operatorname{var}}
\newcommand{\htop}{h_{\topp}}
\newcommand{\cU}{\mathcal{U}}
\newcommand{\cA}{\mathcal{A}}
\begin{document}

\title{On the rotation sets of generic homeomorphisms on the torus $\mathbb T^d$}

\author{H. Lima and P. Varandas}

\address{Heides Lima, Universidade Federal da Bahia\\
Av. Ademar de Barros s/n, 40170-110 Salvador, Brazil.}
\email{heideslima@gmail.com}

\address{Paulo Varandas, Departamento de Matem\'atica e Estat\'istica, Universidade Federal da Bahia\\
Av. Ademar de Barros s/n, 40170-110 Salvador, Brazil.}
\email{paulo.varandas@ufba.br, pcvarand@gmail.com}

\date{\today}

\begin{abstract}
We study the rotation sets for homeomorphisms homotopic to the identity on the torus $\mathbb T^d$, $d\ge 2$.
In the conservative setting, we prove that there exists a Baire residual subset of the set $\text{Homeo}_{0, \lambda}(\mathbb T^2)$ of conservative homeomorphisms homotopic to the identity so that the set of points with wild
pointwise rotation set is a Baire
residual subset in $\mathbb T^2$, and that it carries full topological pressure
and full metric mean dimension.
Moreover, we prove that for every $d\ge 2$ the rotation set of $C^0$-generic conservative homeomorphisms on $\mathbb T^d$ is convex.
Related results are obtained in the
case of dissipative homeomorphisms on tori.
The previous results rely on the description of the topological complexity of the set of points with wild historic behavior
and on the denseness of periodic measures for continuous maps with the gluing orbit property. 
\end{abstract}

\keywords{Rotation sets, homeomorphisms on tori, historic behavior, topological entropy, metric mean dimension,  gluing orbit property,  specification}
\footnotetext{2010 {\it Mathematics Subject classification}:
37E45 
37B40   
37C50 
37E30 
}
\maketitle

\section{Introduction and statement of the main result}

In this paper we address and relate some fundamental concepts in topological dynamical systems, namely
topological pressure (including topological entropy), metric mean dimension and generalized rotation sets for
homeomorphisms on compact metric spaces.
Topological entropy and metric mean dimensions are two measurements of the dynamical complexity, which are particularly important
for continuous dynamical systems. While the first is a topological invariant, it is typically infinite for a $C^0$-Baire generic subset of
homeomorphisms on surfaces \cite{Yano}. On the other hand the second one, inspired by Gromov \cite{Gro} and proposed by
Lindenstrauss and Weiss, is a sort of dynamical analogue of the topological
dimension, depends on the metric and it is bounded above by the dimension of the ambient space \cite{LindW}.
In this way, the metric mean dimension may be used to distinguish the topological complexity of
surface homeomorphisms with infinite topological entropy.

Our main motivation is to describe rotation sets for homeomorphisms homotopic to the identity on tori.
The rotation number of a circle homeomorphism $f$, introduced by Poincar\'e \cite{Poincare}, is defined by
\begin{equation}\label{eq:rot-number}
\rho(f) = \lim_{n\to\infty} \frac{F^n(x)-x}n \; (\text{mod} 1)
\end{equation}
where $x\in \mathbb S^1$ and $F$ is a lift of the circle homeomorphism to $\mathbb R$. The rotation number
is independent of $F$ and $x$ and constitutes a very useful topological invariant (see e.g. \cite{cMvS}).
The situation changes drastically in the case of one-dimensional endomorphisms and higher-dimensional homeomorphisms.
This concept was first extended for continuous maps of degree one in the circle,  in which case the limit \eqref{eq:rot-number}
does not necessarily exist, its accumulation points form a (possibly degenerate) interval and such limit set  defines a rotation interval
which depends on the point $x$  (\cite{MPT}).
A generalization of rotation theory to a higher dimensional setting was studied by
Franks, Kucherenko, Kwapisz, Llibre, MacKay, Misiurewicz, Wolf and Ziemian among others (see \cite{1989F,FM,16KW, KWAP, MZ,MZ2} and references therein) for homeomorphisms homotopic to the identity, where the notion of rotation sets extend the concept of rotation number for  circle homeomorphisms. Although rotation sets are not a complete invariant,
their shapes can be used to describe properties of the dynamical system, as we now illustrate.
If $f$ is a homeomorphism on the torus $\mathbb T^d$ ($d\ge 2$) homotopic to the identity,
$\pi:\mathbb R^d\to\mathbb T^d=\mathbb R^d/\mathbb Z^d$
is the natural projection and $F: \mathbb R^d \to \mathbb R^d$
is a lift for $f$, the \emph{rotation set} of $F$ is defined by
\begin{equation}\label{eq:rot-set}
\rho(F) = \Big\{v\in \mathbb R^d\;:\; \text{exist }  z_i\in \mathbb R^2, \; n_i\in\mathbb N\; \text{so that}\;
	\lim_{i\to\infty} \frac{F^{n_i}(z_i)-z_i}{n_i} =v \Big\}.
\end{equation}
and the\emph{(pointwise) rotation set } $\rho(F,z)$ of a point $z\in \mathbb R^d$ is the set of the following accumulation vectors
\begin{eqnarray}\label{eq:rot-vect}
\rho(F, z) := \text{acc} \Big( \frac{F^{n}(z)-z}{n} \Big)_{n\ge 1}.
\end{eqnarray}
Given $x\in \mathbb T^d$ we define $\rho(F, x)$ by \eqref{eq:rot-vect} (note that the previous expression does
not vary in $\pi^{-1}(x)$).
The \emph{pointwise rotation set} of $F$ is $\rho_p(F) = \bigcup_{x\in \mathbb T^2} \rho(F,x)$.
The previous sets are compact and connected subsets of $\mathbb R^d$, and we will call them \emph{trivial} if they are reduced
to a single vector (see e.g.  Subsection~\ref{subsec:rotation-sets} and \cite{LM, MZ} for more details).
In the $2$-torus, each rotation set is convex (it may fail to be convex in higher dimensional torus)
but there are compact convex sets of the plane that are not the rotation set
of any torus homeomorphisms \cite{KWAP2}. 
Nevertheless, for every rational convex polygon $K\subset \mathbb R^2$ there exists a homeomorphism $f$ on $\mathbb T^2$
homotopic to the identity so that $\rho(F)=K$ \cite{KWAP}.

We will focus on the realization of convex sets as
rotation sets (see Subsection~\ref{subsec:rotation-sets}
for the definition).
More precisely, if $f$ is a homeomorphism  on $\mathbb T^d$, $g\ge 2$, and the map $F:\mathbb R^d \to \mathbb R^d$ is a lift:
\begin{enumerate}
\item given a compact and convex set $K\subset \rho(F)$ does there exist $x\in \mathbb T^d$ and $z\in \pi^{-1}(x)\in \mathbb R^d$ such that $\rho(F,z)=K$?
\item if the previous holds, what is the size of such set of points in $\mathbb T^d$?
\item how commonly (in $f$) is $\rho(F)$ convex?
\end{enumerate}
Concerning the first question we note that if $f$ is a homeomorphism isotopic to the identity on $\mathbb T^2$ and $F$ is a lift then:
(i) for every rational vector $v\in \rho(F)$ in the interior of $\rho(F)$ there exists a periodic point $x\in \mathbb T^d$
	so that $\rho(F,x)=v$ \cite{1989F};
(ii) for any vector $v$ in the interior of $\rho(F)$ there exists a non-empty compact set $\Lambda_v\subset \mathbb T^2$ so that $\rho(F,x)=v$ for every $x\in \Lambda_v$ and, under some mild assumptions, $f\mid_{\Lambda_v}$ has positive topological entropy \cite{MZ,AZ},
(iii) for any any compact connected $C$ is in the interior of the convex hull of vectors in $\rho(F)$ which represent periodic orbits of $f$  there exists a point $x\in \mathbb T^2$ so that $\rho(F, \tilde{x})=C$ \cite{LM}.

It seems that much less is known as an answer to the second question. Building over \cite{GK,P14} we prove that $C^0$-generic conservative homeomorphisms homotopic to the identity on $\mathbb T^2$ are so that the set of points for which the rotation vector is not well defined (equivalently, the limit defined by \eqref{eq:rot-vect} does not exist) form a Baire residual,
full topological pressure and full metric mean dimension subset of $\mathbb T^2$.
In the case of dissipative homeomorphisms homotopic to the identity we prove that the gluing orbit property is typical among the
isolated chain recurrent classes in the non-wandering set.  We use this fact to prove that for most surface homeomorphisms of 
$\mathbb T^2$ homotopic to the identity having rotation set with non-empty interior, the set of points with non-trivial pointwise rotation set is either empty or topologically large (Baire residual, full topological entropy and metric mean dimension) in a isolated chain recurrent class.

Finally, concerning the third question, we refer that Passeggi \cite{P14} proved that an open and dense subset set of
homeomorphisms on $\mathbb T^2$ homotopic to the identity so that the rotation set is a rational polygon.
Here we prove that $C^0$-generic conservative homeomorphisms homotopic to the identity on the torus $\mathbb T^d$, $d\ge 2$,
have a convex rotation set, providing an answer to this question. We also obtain related results in the dissipative context.

The previous results fit in a more general framework, namely the description of the topological complexity of the set of points
with historic behavior (also known as irregular, exceptional or non-typical points) from the topological viewpoint, and the density of periodic measures.
Given a continuous map  $f : X \rightarrow X$ on a compact metric space $(X, d)$ and a continuous observable
$\varphi : X \rightarrow \mathbb R^d$ ($d\ge 1$), the set of points with historic behavior with respect to $\varphi$ is
\begin{eqnarray*}
X_{\varphi, f} := \Big\{x \in X : \lim_{n\to\infty} \frac{1}{n}\sum_{i=0}^{n - 1}\varphi(f^i (x)) ~~\text{does not exist}  \Big\}.
\end{eqnarray*}
The term historic behavior was coined after some dynamics where the phenomena of the persistence of points with this
kind of behavior occurs \cite{Ruelle, Takens}.
Birkhoff's ergodic theorem (applied to the coordinates of $\varphi$)
ensures that $X_{\varphi_f}$ is negligible from the measure theoretic viewpoint, as it has zero measure
with respect to any invariant probability measure. It was first proved by Pesin and Pitskel, and by Barreira and Schmelling,
that in the case of subshifts of finite type,
conformal repellers and conformal horseshoes the sets $X_{\varphi,f}$ are either empty or carry full topological entropy,
and full Hausdorff dimension \cite{Barreira,PP}.
Several extensions of these results have been considered later on, building mainly over the concept of specification introduced
by Bowen in the early seventies and the concept of shadowing (see e.g. \cite{BarreiraS,CKL, DOT, LW,OlsenWinter,VT,TMP} and references therein).

Here we obtain yet another mechanism to describe the topological complexity of the set of points
with historic behavior, and to pave the way to multifractal analysis. In order to do so, we introduce the notion of relative
metric mean dimension. Then, given a continuous map with the gluing orbit property (a concept introduced in \cite{BV} in the context of topological
dynamical systems which bridges between uniform and
non-uniform hyperbolicity and extends the concept of specification) we prove that any non-empty set of points with
historic behavior has three levels of topological complexity: it is Baire generic, it has full topological pressure
and it has full metric mean dimension (Theorems~\ref{thm:A} and \ref{thm:B}). Moreover, we prove that the latter holds for typical pairs
$(f,\varphi)$ of homeomorphisms and continuous observables (Corollary~\ref{cor:A}), building over the fact, of independent
interest, that the gluing orbit property holds on isolated chain recurrent classes of $C^0$-generic homeomorphisms (Corollary~\ref{lem1}).

This paper is organized as follows. In Section~\ref{sec:state} we describe the setting,  state our main results and provide a discussion
on the arguments in the proofs. Some preliminaries on the topological invariants and notions of complexity are given in
Section~\ref{sec:prelim}. Section~\ref{sec:irregular} is devoted to the proof of the results on the set of points with wild historic
behavior for maps with the gluing orbit property. The results on the rotation sets for homeomorphisms homotopic to identity
are given in Sections~\ref{sec:homotopic1} and ~\ref{sec:homotopic2}. Finally, in Section ~\ref{sec:questions} we make some comments
and discuss possible directions of research.

\section{Statement of the main results}\label{sec:state}

\subsection{Pointwise rotation sets of homeomorphisms on the torus $\mathbb T^2$}

In this section we address the questions concerning the pointwise rotation sets of torus homeomorphisms homotopic to the identity.
We note that the pointwise rotation set may fail to be connected and all (see e.g. \cite[Example 1]{LM}).
Our first results ensure that this is not the typical situation 
in the case of volume preserving homeomorphisms.

\begin{maintheorem}\label{thm:C}
There exists a Baire residual subset $\mathfrak R_1 \subset \text{Homeo}_{0, \lambda}(\mathbb T^2)$ so that, for every $f\in \mathfrak{R}_1$ and every lift $F: \mathbb R^2 \to \mathbb R^2 $ of $f$:
\begin{enumerate}
\item the pointwise rotation set $\rho_p(F)$ is connected;
\item the set of points $x\in \mathbb T^2$ such that $\rho(F,{x})$ is non-trivial 
and coincides with $\rho_p(F)$
is a Baire residual subset of $\mathbb T^2$, it carries full topological pressure and full metric mean dimension in $\mathbb T^2$.
\end{enumerate}
\end{maintheorem}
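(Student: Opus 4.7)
The plan is to recast pointwise rotation vectors as Birkhoff averages of a displacement cocycle and to reduce the statement to the historic-behavior results of Theorems~\ref{thm:A}--\ref{thm:B}. For any lift $F$, the map $\varphi:\mathbb{T}^2\to\mathbb{R}^2$ given by $\varphi(x):=F(\tilde x)-\tilde x$ is well-defined (independent of $\tilde x\in\pi^{-1}(x)$) and continuous, and telescoping yields
$$\frac{F^n(z)-z}{n}=\frac{1}{n}\sum_{i=0}^{n-1}\varphi(f^i(\pi(z))).$$
Hence $\rho(F,x)$ is precisely the accumulation set of the Birkhoff averages of $\varphi$ at $x$, and $\{x:\rho(F,x)\text{ is non-trivial}\}$ coincides with the historic set $X_{\varphi,f}$.

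The residual class $\mathfrak{R}_1$ would be built by intersecting several $C^0$-generic properties. Combining Corollary~\ref{lem1} (gluing orbit property on isolated chain recurrent classes) with the classical genericity of transitivity for volume-preserving homeomorphisms, the whole torus is a single chain recurrent class of $f$ on which $f$ has the gluing orbit property; in particular periodic measures are dense in the space of invariant measures. I would further intersect with the generic subclass on which $\rho(F)$ has non-empty interior, a known $C^0$-generic property arising from perturbations that create hyperbolic periodic orbits with prescribed rotation vectors in the spirit of \cite{P14}. In this setting at least two ergodic measures with distinct rotation vectors coexist.

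Applying Theorem~\ref{thm:B} to the pair $(f,\varphi)$ with the above generic assumptions, the historic set $X_{\varphi,f}$ is non-empty, hence Baire residual, of full topological pressure and of full metric mean dimension in $\mathbb{T}^2$. To upgrade this to the coincidence $\rho(F,x)=\rho_p(F)$, I would fix a countable family $(\mu_n)_n$ of ergodic invariant measures whose rotation vectors $v_n=\int\varphi\,d\mu_n$ are dense in $\rho(F)$; this is possible by the Misiurewicz--Ziemian formula $\rho(F)=\{\int\varphi\,d\mu\}$ and the density of periodic measures. Using the gluing orbit property, the set of $x\in\mathbb{T}^2$ whose empirical measures accumulate on $\mu_n$ is open and dense for each $n$; a Baire intersection over $n$, together with the residual set from Theorem~\ref{thm:B}, produces a Baire residual subset of $\mathbb{T}^2$ on which $\rho(F,x)\supset\{v_n\}_n$, hence, being closed and always contained in $\rho(F)$, equal to $\rho(F)$. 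This forces $\rho_p(F)=\rho(F)$, which is convex and therefore connected, proving (1); and item (2) follows since the residual set realizes $\rho_p(F)$ and inherits full pressure and full metric mean dimension from Theorem~\ref{thm:B}.

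The main obstacle is ensuring that the Baire residual subset of $\mathbb{T}^2$ simultaneously (i) lies in $X_{\varphi,f}$ so as to inherit full topological pressure and full metric mean dimension from Theorem~\ref{thm:B}, and (ii) realizes the whole of $\rho(F)$ as its pointwise rotation set. I expect to handle this not by intersecting two residual sets post hoc (which preserves residuality but not a priori full pressure or mean dimension), but by re-running the combinatorial construction underlying Theorem~\ref{thm:B}: the orbit segments used to exhibit historic behavior can be interleaved, via the gluing orbit property, with long orbit segments tracking each $\mu_n$, so that the generic points produced come equipped with $\rho(F,x)=\rho(F)$ by design while retaining the full pressure and full metric mean dimension estimates.
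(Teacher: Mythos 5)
Your core reduction --- recasting $\rho(F,x)$ as the accumulation set of Birkhoff averages of the displacement cocycle $\varphi_F$ and then invoking Theorems~\ref{thm:A} and \ref{thm:B} --- is exactly the paper's strategy, but two of the links you supply differ from the paper's, and one of them is a genuine gap.

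Your route to the gluing orbit property for generic conservative $f$ does not work. Corollary~\ref{lem1} produces a residual subset $\mathcal R_0\subset\text{Homeo}_0(X)$ on which the gluing orbit property holds on isolated chain recurrent classes. But $\text{Homeo}_{0,\lambda}(\mathbb T^2)$ is nowhere dense in $\text{Homeo}_0(\mathbb T^2)$, so a set residual in $\text{Homeo}_0$ need not intersect $\text{Homeo}_{0,\lambda}$ in a residual subset (or at all), and you cannot simply restrict. The paper avoids this by using the stronger fact (Guih\'eneuf--Lefeuvre, stated as Corollary~\ref{cor:Hgluing}) that \emph{specification} is $C^0$-generic in $\text{Homeo}_{0,\lambda}(\mathbb T^d)$, together with the openness of $\text{Homeo}_{0,\lambda}(\mathbb T^2)$ in $\text{Homeo}_\lambda(\mathbb T^2)$. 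Likewise, the generic nonemptiness of $\text{int}\,\rho(F)$ is taken from Guih\'eneuf--Koropecki (Theorem~\ref{lem2}), whose ``nonempty interior'' conclusion is specific to the area-preserving case; Passeggi's result concerns dissipative homeomorphisms and only yields a rational polygon, which can be degenerate, so it is the wrong citation here.

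Your last paragraph puts a finger on a real subtlety that the paper's own proof glosses over: Theorem~\ref{thm:A} gives residuality of the wild-historic set $\{x:\rho(F,x)=\rho_p(F)\}$, whereas Theorem~\ref{thm:B} gives full pressure and full metric mean dimension only for the a priori larger set $X_{\varphi_F,f}$, and the paper merely writes ``Theorem~C is now a consequence of Theorems~A and B'' without addressing the mismatch. Your suggested fix --- rerunning the Theorem~\ref{thm:B} construction with orbit segments interleaved so as to track a countable family of measures whose rotation vectors are dense in $\rho(F)$ --- is the right idea, but it is only sketched and would need to be carried out. Note also that the Baire intersection over the $\mu_n$ to obtain residuality is unnecessary: Theorem~\ref{thm:A} already delivers the residuality of the wild-historic set directly.
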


Now we describe the counterpart of Theorem~\ref{thm:C} on the space $\text{Homeo}_{0}^{}(\mathbb T^2)$
of homeomorphisms homotopic to the identity.
Consider the set
$$
\mathcal A = \left\{f \in \text{Homeo}_{0}^{}(\mathbb T^2) : \text{int}\,\rho(F)\neq\emptyset\right\}.
$$ 
It is clear that the set $\mathcal A$  does not depend on the lift $F$ of $f$.
All homeomorphisms in $\cA$
have positive topological entropy \cite{LM}.
Let $CR(f)$ denote the chain recurrent set of $f$ (cf. Subsection~\ref{gop} for definitions). 
We prove the following:

\begin{maintheorem}\label{thm:D}
There exists a Baire residual subset $\mathfrak R_2 \subset \mathcal A$  so that, for every $f\in \mathfrak{R}_2$
there exists a positive entropy chain recurrent class $\Gamma\subset \Omega (f)$ such that $int(\rho(F\mid_{\pi^{-1}(\Gamma)})) \neq \emptyset$.  Moreover, if in addition $\Gamma$ is a isolated chain recurrent class then 
the set of points $x\in \Gamma$ for which $\rho(F,{x})$ is non-trivial  
is a Baire residual subset of $\Gamma$ that carries full topological entropy and full metric mean dimension in $\Gamma$.
\end{maintheorem}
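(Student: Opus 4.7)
The plan is to adapt the strategy of Theorem~\ref{thm:C} to the dissipative setting: Corollary~\ref{lem1} (genericity of the gluing orbit property on isolated chain recurrent classes) replaces the conservative ergodic-theoretic input, and Theorems~\ref{thm:A} and~\ref{thm:B} are then applied to the displacement cocycle on the class $\Gamma$. Before starting, note that $\cA$ is a Baire space (it contains the open dense set of rational polygons of \cite{P14}), and every $f\in\cA$ has positive topological entropy by \cite{LM}.

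For the first assertion, given a generic $f\in\cA$ I would produce a positive-entropy chain recurrent class $\Gamma$ with $\text{int}\,\rho(F|_{\pi^{-1}(\Gamma)})\neq\emptyset$ as follows. Fix three non-collinear rational vectors $v_1,v_2,v_3\in\text{int}\,\rho(F)$; by Franks' theorem they are realized by periodic orbits $O_1,O_2,O_3$ of $f$. A $C^0$-small perturbation (in the spirit of connecting-lemma techniques available for $C^0$-generic homeomorphisms) merges these orbits into a single chain recurrent class $\Gamma$, so that $\rho(F|_{\pi^{-1}(\Gamma)})$ contains the convex hull of $\{v_1,v_2,v_3\}$ and hence has non-empty interior; positive entropy on $\Gamma$ is a consequence of \cite{LM,MZ}. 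A countable exhaustion of $\text{int}\,\rho(F)$ by rational triples, combined with a standard Baire category argument, yields a residual subset of $\cA$ on which such a $\Gamma$ exists.

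Suppose now $\Gamma$ is an isolated chain recurrent class. By Corollary~\ref{lem1}, after intersecting with a further residual subset, $f|_\Gamma$ enjoys the gluing orbit property. Define the displacement cocycle $\varphi:\Gamma\to\mathbb R^2$ by $\varphi(x)=F(\ti x)-\ti x$, independent of the choice of lift $\ti x\in\pi^{-1}(x)$. The telescoping identity
$$
\frac{1}{n}\sum_{i=0}^{n-1}\varphi(f^i(x)) = \frac{F^n(\ti x)-\ti x}{n}
$$
identifies the set of points $x\in\Gamma$ with non-trivial $\rho(F,x)$ with the historic set $\Gamma_{\varphi,f|_\Gamma}$. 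Non-emptiness of the latter follows from the interior condition: by the density of periodic measures granted by the gluing orbit property, there exist two ergodic measures on $\Gamma$ with distinct rotation vectors, and the standard gluing construction produces orbits whose Birkhoff averages oscillate between these vectors. Theorems~\ref{thm:A} and~\ref{thm:B} applied to the pair $(f|_\Gamma,\varphi)$ then deliver the Baire residual, full topological entropy and full metric mean dimension conclusions for $\Gamma_{\varphi,f|_\Gamma}$ inside $\Gamma$.

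The main obstacle I anticipate is the existence step: ensuring that $C^0$-generically there is a \emph{single} chain recurrent class whose restricted rotation set already has non-empty interior. This demands delicate $C^0$-perturbation techniques to fuse chain recurrent classes supporting periodic orbits of distinct rotation vectors while remaining inside $\cA$; the isolation hypothesis in the second part is a convenient assumption that unlocks Corollary~\ref{lem1}, rather than an object needing to be built.
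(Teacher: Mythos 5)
Your treatment of the second part of the theorem matches the paper: apply Corollary~\ref{lem1} on an isolated chain recurrent class $\Gamma$ to obtain the gluing orbit property, translate the pointwise rotation set into the historic set of the displacement cocycle via the telescoping identity, observe that $\text{int}\,\rho(F\mid_\Gamma)\neq\emptyset$ forces $\varphi_F\notin\overline{Cob}$ so that the historic set is non-empty (Lemma~\ref{obsat}), and invoke Theorems~\ref{thm:A} and~\ref{thm:B}. That is exactly what the paper does.

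Your treatment of the first assertion, however, has a genuine gap, and it is precisely the ``main obstacle'' you flag at the end. You propose to realize three non-collinear rational vectors by periodic orbits (Franks) and then fuse these orbits into a single chain class by ``$C^0$-small perturbation in the spirit of connecting-lemma techniques.'' There is no standard $C^0$ connecting lemma for homeomorphisms to appeal to here, the fusion could a priori shrink the rotation set or leave $\cA$, and even if the three orbits were placed in a common class you would still need convexity of the \emph{restricted} rotation set $\rho(F\mid_{\pi^{-1}(\Gamma)})$ to conclude it has interior, which is not automatic. None of this machinery is needed. The paper's Lemma~\ref{lem6565} proves the first assertion for \emph{every} $f\in\cA$, with no perturbation at all: by Franks the rational vectors in $\text{int}\,\rho(F)$ are realized by periodic orbits, so one can find a rational polygon $\Delta$ with periodic vertices whose interior contains a small disk $D\subset\text{int}\,\rho(F)$; by Llibre--Mackay (Theorem~\ref{proprs4}(ii)) there exists a single point $x\in\mathbb T^2$ with $\rho(F,\tilde x)=D$; its $\omega$-limit set lies in one chain recurrent class $\Gamma$, which therefore satisfies $D\subset\rho(F\mid_{\pi^{-1}(\Gamma)})$ and hence $\text{int}\,\rho(F\mid_{\pi^{-1}(\Gamma)})\neq\emptyset$; positive entropy of $f\mid_\Gamma$ follows from Theorem~\ref{proprs4}(i). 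So the residual set $\mathfrak R_2$ in the statement is just $\mathcal R_0\cap\cA$, where $\mathcal R_0$ is taken from Corollary~\ref{lem1} solely to power the second half; no category argument or countable exhaustion is needed for the first half.
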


\begin{remark}
Since $C^0$-generic homeomorphisms have infinite topological entropy \cite{Yano}, 
for every $\beta>0$ there exists a chain recurrent class $C_\beta \subset CR(f)$ such that $\htop(f\mid_{C_\be})\ge \be$
(by the variational principle it is enough to take chain recurrent classes containing supports of ergodic measures with 
arbitrarily large entropy). However, a priori the rotation set restricted to each of these chain recurrent classes 
(obtained in \cite{Yano} by the creation of pseudo-horseshoes in local 
perturbations of the dynamics) could have empty interior.   
\end{remark}

A construction of a smooth minimal diffeomorphism on the two-torus, homotopic to the identity, whose rotation set is a non-trivial
line segment and so that the pointwise rotation set is non-trivial for Lebesgue almost every point
has been recently announced in \cite{ALX}.
We also note that the proof of Theorem \ref{thm:C} uses that generic conservative homeomorphisms satisfy the specification property,
while in the dissipative setting, the specification property seldom occurs. For that reason a key ingredient used in the proof of 
Theorem \ref{thm:D} is that generic homeomorphisms restricted to isolated chain recurrent classes satisfy the gluing orbit property.  

\subsection{On the rotation set of homeomorphisms on the torus $\mathbb T^d$ ($d\ge 2$)} 

The shape of the different rotation sets
for an homeomorphism $f$ homotopic to identity on the torus $\mathbb T^d$
have drawn the attention since these have been introduced (see Subsection~\ref{subsec:rotation-sets} for definitions).
Focusing first on connectedness, the rotation set $\rho(F)$ (and each pointwise rotation set $\rho(F,x)$) is a compact
and connected set in $\mathbb R^d$ \cite{LM,MZ2}. However, the pointwise rotation set $\rho_p(F)$ may fail to be connected even when $d=2$ \cite{LM}. As for convexity,  $\rho(F)$ is convex when $d=2$, but there are higher dimensional examples where it fails to be convex \cite{MZ2}.

Our next result ensures that rotation sets of torus homeomorphisms are typically convex
(we refer the reader to Subsection~\ref{gop} for the notion of chain recurrence).

\begin{maintheorem}\label{thm:E}
For every $d\ge 2$:
\begin{enumerate}
\item there exists a Baire residual subset $\mathfrak R_3\subset \text{Homeo}_{0, \lambda}(\mathbb T^d)$ so that
$\rho(F)$ is convex, for every lift $F$ of a homeomorphism $f\in \mathfrak{R}_3$; and
\item 
there exists a Baire residual subset $\mathfrak R_4\subset \text{Homeo}_{0}(\mathbb T^d)$ so that
$\rho(F\mid_{\pi^{-1}(\Gamma)})$ is convex, for every isolated chain recurrent class $\Gamma\subset \Omega(f)$ and every lift $F$
of $f\in \mathfrak R_4$.
\end{enumerate}
\end{maintheorem}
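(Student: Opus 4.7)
The plan is to reduce convexity of $\rho(F)$ to the (tautological) convexity of the measure-theoretic rotation set, by showing that these two sets coincide whenever periodic measures are dense in the space of invariant probability measures. For any lift $F$ of a homeomorphism $f\in \text{Homeo}_{0}(\mathbb T^d)$, define the displacement
$$
\phi_F(x) := F(\tilde x)-\tilde x,
$$
which is independent of the choice of lift $\tilde x\in \pi^{-1}(x)$ (because $f$ is homotopic to the identity, so $F(\tilde x + n) = F(\tilde x) + n$ for all $n\in \mathbb Z^d$), and therefore defines a continuous map $\phi_F:\mathbb T^d\to \mathbb R^d$. The key auxiliary object is the \emph{measure rotation set}
$$
\rho_\mu(F) := \Big\{\int \phi_F\, d\mu \; : \; \mu\in \mathcal{M}_f(\mathbb T^d)\Big\},
$$
which is convex as the affine image of the convex simplex $\mathcal{M}_f(\mathbb T^d)$ under the weak-$*$ continuous functional $\mu\mapsto \int \phi_F\, d\mu$.

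The first step is to prove that $\rho(F)=\rho_\mu(F)$ whenever periodic measures are weak-$*$ dense in $\mathcal{M}_f(\mathbb T^d)$. The inclusion $\rho(F)\subseteq \rho_\mu(F)$ is the standard Krylov--Bogolyubov argument: for $v=\lim_i (F^{n_i}(z_i)-z_i)/n_i$, the telescoping identity $F^{n_i}(z_i)-z_i = \sum_{j=0}^{n_i-1}\phi_F(f^j(\pi(z_i)))$ gives $(F^{n_i}(z_i)-z_i)/n_i = \int \phi_F\, d\mu_i$ for the empirical measures $\mu_i=\frac1{n_i}\sum_{j=0}^{n_i-1}\delta_{f^j(\pi(z_i))}$, and any weak-$*$ accumulation measure $\mu$ is $f$-invariant with $\int \phi_F\, d\mu = v$. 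Conversely, the rotation vector of a periodic measure $\mu_p = \frac1k\sum_{j=0}^{k-1}\delta_{f^j(x)}$ with $x\in \text{Fix}(f^k)$ equals $(F^k(\tilde x)-\tilde x)/k$, which is manifestly in $\rho(F)$. Weak-$*$ continuity of the rotation functional and density of periodic measures then imply that the rotation vectors of periodic measures are dense in $\rho_\mu(F)$; since $\rho(F)$ is closed, $\rho_\mu(F)\subseteq \rho(F)$ follows, yielding the desired equality and convexity.

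With this reduction in hand, part (1) is proved by taking $\mathfrak R_3$ to be (a subset of) the residual set provided by Theorem~\ref{thm:C}, on which the generic conservative homeomorphism satisfies the specification/gluing orbit property and, in particular, has periodic measures dense in $\mathcal{M}_f(\mathbb T^d)$; applying the argument above gives convexity of $\rho(F)$ for every lift $F$. For part (2), I take $\mathfrak R_4$ to be the residual set provided by Corollary~\ref{lem1}, on which $f\vert_\Gamma$ enjoys the gluing orbit property on every isolated chain recurrent class $\Gamma\subset \Omega(f)$. Since the gluing orbit property forces periodic measures to be weak-$*$ dense in $\mathcal{M}_{f}(\Gamma)$, repeating the displacement-cocycle argument for the restriction $f\vert_\Gamma$ yields
$$
\rho(F\mid_{\pi^{-1}(\Gamma)}) \;=\; \Big\{\int \phi_F\, d\mu : \mu\in \mathcal{M}_f(\Gamma)\Big\},
$$
which is convex.

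The main obstacle is supplying the density of periodic measures in both settings: in the conservative case one needs a $C^0$-generic specification/gluing orbit statement in $\text{Homeo}_{0,\lambda}(\mathbb T^d)$ that holds for every $d\ge 2$ (not only $d=2$ where convexity is automatic by Misiurewicz--Ziemian), and in the dissipative case one has to guarantee the gluing orbit property simultaneously on the (possibly countable) family of isolated chain recurrent classes. Both inputs are, however, precisely the content of the earlier results of the paper (the genericity used in Theorem~\ref{thm:C} and Corollary~\ref{lem1}), so once the displacement-cocycle identification $\rho(F)=\rho_\mu(F)$ is in place, Theorem~\ref{thm:E} is a direct corollary.
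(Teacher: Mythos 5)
Your proposal is correct and follows essentially the same route as the paper: reduce to density of periodic measures via the specification/gluing orbit property, identify $\rho(F)$ with the (convex) measure-theoretic rotation set $\rho_{inv}(F)$, and take the residual sets from Corollary~\ref{cor:Hgluing} (conservative case, $\mathbb T^d$) and Corollary~\ref{lem1} (dissipative case). The only slip is attributing the conservative residual set to Theorem~\ref{thm:C}, which is stated only for $\mathbb T^2$; the correct source for general $d\ge 2$ is Corollary~\ref{cor:Hgluing}, as you later acknowledge.
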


While the rotation set is always connected, in the case of dissipative homeomorphisms $\text{Homeo}_0(\mathbb T^d)$ (e.g. Morse-Smale diffeomorphisms on the torus) the pointwise rotation set need not to be connected. If the pointwise rotation set is
connected then one can hope that the ``local" convexity  statement in item (2) can be used to prove the convexity of the rotation set.

\subsection{Points with historic behavior for maps with
gluing orbit property}

The results in this section, despite their own interest, will be key technical ingredients in the characterization of rotation sets for
homeomorphisms on tori. These applications motivate to describe the set of points with historic behavior for observables taking values on $\mathbb R^d$, $d\ge 1$,
and dynamical systems with the gluing orbit property (see Subsection~\ref{gop} for the definition).

Let $X$ denote a compact metric space, $f: X\to X$ be a continuous map, $d\ge 1$ be an integer and
$\varphi: X \to \mathbb R^d$ be a continuous observable.
Given $x\in X$, let us denote by $\mathcal V_\varphi(x)$
the (connected) set obtained as accumulation points of $(\frac1n\sum_{j=0}^{n-1} \varphi ( f^j(x)))_{n\ge 1}$.
In the higher dimensional setting context, 
$(d>1)$ the set $\mathcal V_\varphi =\bigcup_{x\in X} \mathcal V_\varphi(x) \subset \mathbb R^d$
of all vectors obtained as pointwise limits of Birkhoff averages
need not be connected or convex.

A point $x\in X$ has \emph{historic behavior} for $\varphi$
(also known as \emph{exceptional}, \emph{irregular} or \emph{non-typical} behavior) if the limit $\lim_{n\to\infty} \frac1n\sum_{j=0}^{n-1} \varphi ( f^j(x))$ does not exist.
Moreover, in the case that $\mathcal V_\varphi$ does not reduce to a single vector, we say that $x\in X$ has \emph{wild historic behavior} if $\mathcal V_\varphi(x)=\mathcal V_\varphi$.
In rough terms, a point has wild historic behavior if the Birkhoff averages have the largest oscillation in $\mathcal V_\varphi$.
We say that $B\subset X$ is \emph{Baire residual} if it contains a countable intersection of open and dense subsets of $X$.

Our first result asserts that, under a mild assumption,  if non-empty, the set of points with wild historic behavior is large from the category
point of view.

\begin{maintheorem}\label{thm:A}
Let $X$ be a compact metric space, let $f: X \to X$ be a continuous map with the gluing orbit property and let
$\varphi: X \to \mathbb R^d$ be continuous. Then:
\begin{enumerate}
\item either there is $v\in \mathbb R^d$ so that $\lim_{n\to\infty} \frac1n\sum_{j=0}^{n-1} \varphi( f^j(x))=v$ for all $x\in X$,
\item or the set $X_{\varphi,f}$ of points $x\in X$ so that the sequence $(\frac1n\sum_{j=0}^{n-1} \varphi( f^j(x)))_{n\ge 1}$
accumulates in a non-trivial connected 
subset of $\mathbb R^d$ is Baire residual on $X$.
\end{enumerate}
 Moreover, if $X_{\varphi,f} \neq \emptyset$ then $\mathcal V_\varphi$ is connected and the set of points with wild historic behavior is a Baire residual
subset of $X$.
\end{maintheorem}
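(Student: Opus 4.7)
The plan is to exploit two consequences of the gluing orbit property (GOP): density of periodic measures in the space $\mathcal M_f(X)$ of $f$-invariant probability measures, and the ability to concatenate finitely many orbit segments with uniformly bounded gap function. Set $\mathcal I_\varphi := \{\int \varphi\,d\mu : \mu \in \mathcal M_f(X)\}$, which is compact and convex in $\mathbb R^d$. By passing to weak-$*$ limits of empirical measures, every $v \in \mathcal V_\varphi(x)$ has the form $\int \varphi\,d\mu$ for some $\mu\in \mathcal M_f(X)$, so $\mathcal V_\varphi \subset \mathcal I_\varphi$.

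First, I would dispose of the dichotomy. If $\mathcal I_\varphi = \{v\}$ reduces to a point, a standard uniform ergodic argument (all invariant measures integrating $\varphi$ to the same value forces convergence at every $x$) yields case (1). So I may assume $\mathcal I_\varphi$ is non-degenerate. Fix a countable dense set $\{v_k\}_{k\ge 1}\subset \mathcal I_\varphi$ realised as $v_k = \int\varphi\,d\mu_{p_k}$ for periodic points $p_k$ obtained from density of periodic measures under GOP. For $k,m,N\ge 1$, set
\begin{equation*}
U(k,m,N) := \Bigl\{ x\in X : \exists\, n\ge N,\; \Bigl\| \tfrac1n\sum_{j=0}^{n-1} \varphi(f^j(x)) - v_k \Bigr\| < \tfrac1m \Bigr\},
\end{equation*}
which is open by continuity of $\varphi$. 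Density is where GOP enters: given $y\in X$ and $r>0$, I would apply GOP to glue a short initial segment of length $\ell_0$ starting $r$-close to $y$ to a very long segment of the orbit of $p_k$ of length $\ell_1\gg \ell_0$, with connecting gap bounded by a constant depending only on $f$. For $\ell_1$ sufficiently large the periodic contribution dominates, placing the base point of the concatenation in $U(k,m,N)\cap B(y,r)$.

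The Baire residual set $\mathcal R := \bigcap_{k,m,N} U(k,m,N)$ then consists of points whose Birkhoff averages accumulate at every $v_k$, and hence, by closedness of $\mathcal V_\varphi(x)$, at every point of $\overline{\{v_k\}}=\mathcal I_\varphi$. Combined with $\mathcal V_\varphi(x)\subset\mathcal I_\varphi$ this gives $\mathcal V_\varphi(x) = \mathcal I_\varphi$ for every $x\in\mathcal R$. Since $\mathcal I_\varphi$ is non-trivial and convex (hence connected), this simultaneously proves case (2) and identifies $\mathcal R$ as a Baire residual set of points with wild historic behavior. For the final clause, if $X_{\varphi,f}\neq\emptyset$ then some $x$ has divergent Birkhoff averages, which forces $\mathcal I_\varphi$ to contain at least two distinct values, so the same construction yields $\mathcal V_\varphi=\mathcal I_\varphi$ connected together with the Baire residual wild historic set.

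The main obstacle is the quantitative bookkeeping inside the gluing step: one must choose $\ell_1\gg \ell_0$ much larger than the GOP gap bound and simultaneously ensure that the partial Birkhoff average evaluated at the end of the concatenated orbit, not just at the end of the periodic block, lies within $1/m$ of $v_k$. Controlling this across arbitrary initial segments and arbitrary target precisions is the standard but delicate estimate underlying specification-type constructions, adapted here to the weaker variable-gap regime of the gluing orbit property.
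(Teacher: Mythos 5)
Your approach is genuinely different from the paper's and, once one small slip is repaired, works; it is also considerably more direct. The paper takes a fixed non-trivial connected $\Delta\subset\mathcal V_\varphi$, a $1/k$-dense sequence $(v_{k,i})$ in $\Delta$ with controlled successive jumps, and builds a nested family of sets $L_{k,i}(q)$ by repeated gluing, culminating in the rather elaborate set $\mathcal R$ of \eqref{eq:residual}; it then proves $G_\delta$-ness and density by hand and reads off $\Delta\subseteq\mathcal V_\varphi(x)$ from the recursive shadowing estimates, and finally obtains connectedness of $\mathcal V_\varphi$ by a finite-intersection-of-residuals argument. You instead observe that a single gluing of a short prefix near a given point $y$ to one long Birkhoff-good block already shows each $U(k,m,N)$ is open dense, so that $\mathcal R=\bigcap U(k,m,N)$ is residual in one stroke, and then you squeeze $\mathcal I_\varphi\subseteq\mathcal V_\varphi(x)\subseteq\mathcal I_\varphi$. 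This buys a cleaner proof and, notably, a \emph{stronger} conclusion than the paper states: you get $\mathcal V_\varphi=\mathcal I_\varphi=\{\int\varphi\,d\mu:\mu\in\mathcal M_{inv}(f)\}$, hence $\mathcal V_\varphi$ is convex (not merely connected). The paper's construction is heavier, but the kind of bookkeeping it sets up (separated families $W_{k,i}$, controlled transition times, Carath\'eodory covers) is what gets reused for the pressure/mean-dimension estimate of Theorem~\ref{thm:B}; your argument does not carry that quantitative information.

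There is one genuine slip you should repair: you invoke ``density of periodic measures under GOP'' to realize the dense set $\{v_k\}\subset\mathcal I_\varphi$. Plain GOP does not yield periodic orbits at all --- irrational rotations satisfy GOP and have none --- and the paper's Lemma~\ref{lemma:per-measures} explicitly assumes the \emph{periodic} GOP. Theorem~\ref{thm:A} only assumes the plain version. There are two easy fixes. (a) If you want the sharp conclusion $\mathcal V_\varphi=\mathcal I_\varphi$: realize an arbitrary $v=\int\varphi\,d\mu\in\mathcal I_\varphi$ approximately by a long orbit segment via ergodic decomposition plus GOP concatenation --- choose finitely many ergodic $\nu_1,\dots,\nu_m$ and weights $\alpha_i$ with $\sum\alpha_i\int\varphi\,d\nu_i\approx v$, pick Birkhoff-generic points $w_i$ for $\nu_i$, and glue segments of lengths proportional to $\alpha_i$; no periodicity is needed. (b) If you only want the conclusion as stated: replace $\mathcal I_\varphi$ by $\mathcal V_\varphi$ throughout and take $\{v_k\}$ dense in $\mathcal V_\varphi$; each $v_k$ then comes for free from the definition (it lies in some $\mathcal V_\varphi(x_k)$, so the sets $P(v_k,\delta,n)$ are nonempty along a subsequence of $n\to\infty$). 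With (b) the residual set has $\mathcal V_\varphi(x)\supseteq\overline{\mathcal V_\varphi}$, and combined with $\mathcal V_\varphi(x)\subseteq\mathcal V_\varphi$ this forces $\mathcal V_\varphi$ closed and equal to the connected set $\mathcal V_\varphi(x)$ --- exactly the ``moreover'' clause. Either fix is local and the architecture of your proof is sound.
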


The next result establishes that the set of points with historic behavior has also large complexity, now measured in terms of
topological entropy and metric mean dimension.  We refer the reader to
Subsection~\ref{sec:entropy-mmd} for the notions of full topological pressure and full metric mean dimension.

\begin{maintheorem}\label{thm:B} Let $f : X \rightarrow X$ be a continuous map with the gluing orbit property on compact metric space $X$ and let $\varphi : X \rightarrow \mathbb R^d$ be a continuous observable. Assume that $X_{\varphi, f}^{}\neq \emptyset$. Then $X_{\varphi, f}^{}$ carries full topological pressure and full metric mean dimension.
\end{maintheorem}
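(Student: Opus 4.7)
My plan is to adapt the classical Pesin--Pitskel / Barreira--Schmerling / Thompson strategy, with the specification property replaced by the gluing orbit property, following a Moran--type construction performed simultaneously at the pressure and at every scale $\varepsilon>0$ needed for the metric mean dimension.

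\textbf{Phase 1 (Two contrasting ergodic measures of near-maximal pressure).} Fix a continuous potential $\psi: X \to \mathbb{R}$ and $\eta>0$. Since $X_{\varphi,f}\neq\emptyset$, the ergodic decomposition theorem forces the existence of at least two ergodic measures $\mu_1,\mu_2\in\mathcal{M}_f(X)$ with $\int\varphi\, d\mu_1\neq\int\varphi\, d\mu_2$ (otherwise every generic point has the same Birkhoff limit and, by the argument underlying Theorem~A, $X_{\varphi,f}=\emptyset$). Using the variational principle and an affine perturbation of the form $(1-t)\mu_i+t\nu$ for a high-pressure $\nu$, I can assume without loss that both $\mu_i$ additionally satisfy $h_{\mu_i}(f)+\int\psi\, d\mu_i\ge P_{\mathrm{top}}(f,\psi)-\eta$, while keeping $\int\varphi\, d\mu_1\neq\int\varphi\, d\mu_2$.

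\textbf{Phase 2 (Generic separated sets and gluing).} For each $n$, Katok's formula and the Birkhoff/Shannon--McMillan--Breiman theorems yield an $(n,\delta)$-separated set $E_n^{(i)}$ of $\mu_i$-generic points with $|E_n^{(i)}|\ge\exp\bigl(n(h_{\mu_i}(f)-\eta)\bigr)$, and whose empirical Birkhoff averages of $\varphi$ and $\psi$ are $\eta$-close to the corresponding $\mu_i$-integrals. Applying the gluing orbit property at accuracy $\delta$ produces a gap $m=m(\delta)$ such that arbitrary orbit segments can be $\delta$-glued with transition times in $[0,m]$. I pick a fast-growing sequence $n_1\ll n_2\ll\cdots$ (so that $m/n_k\to 0$) and for each sequence $\underline{x}=(x_k)_{k\ge 1}$ with $x_k\in E_{n_k}^{(i_k)}$ and $i_k$ alternating between $1$ and $2$, I glue the segments $(x_k,f(x_k),\dots,f^{n_k-1}(x_k))$ into a single pseudo-orbit, which is $\delta$-shadowed by some point $y(\underline{x})\in X$.

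\textbf{Phase 3 (Moran set inside $X_{\varphi,f}$ and pressure estimate).} The map $\underline{x}\mapsto y(\underline{x})$ is injective after passing to a $(N_k,\delta/2)$-separated subset at each level, where $N_k=\sum_{j\le k}(n_j+m)$; this yields a Cantor-like set $F\subset X$. By the alternation of indices, every $y\in F$ has $\mathcal{V}_\varphi(y)$ containing both $\int\varphi\, d\mu_1$ and $\int\varphi\, d\mu_2$, so $F\subset X_{\varphi,f}$. On $F$ I construct a Bernoulli-type measure $\nu$ giving each cylinder weight $\prod_{j\le k}|E_{n_j}^{(i_j)}|^{-1}$ and exploit a Billingsley--style lemma (as in Takens--Verbitskiy and Thompson) to bound from below the topological pressure of $F$:
\[
P_{F}(f,\psi)\;\ge\;\liminf_{k\to\infty}\frac{1}{N_k}\sum_{j\le k} n_j\bigl(h_{\mu_{i_j}}(f)+\textstyle\int\psi\, d\mu_{i_j}-C\eta\bigr)\;\ge\;P_{\mathrm{top}}(f,\psi)-C'\eta,
\]
where the contribution of the gaps is killed by $m/n_k\to 0$ and the local pressure of $\nu$ is controlled through the generic Birkhoff averages in Phase~2. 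Letting $\eta\to 0$ yields $P_{X_{\varphi,f}}(f,\psi)=P_{\mathrm{top}}(f,\psi)$.

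\textbf{Phase 4 (Metric mean dimension).} For the second statement I run the construction at every scale $\varepsilon>0$, taking $\delta=\varepsilon$ and using $(n,\varepsilon)$-separated generic sets to get the scale--$\varepsilon$ entropy bound $h_{\mathrm{top}}(F,\varepsilon)\ge h_{\mathrm{top}}(f,\varepsilon)-C''\eta$ with $\eta$ independent of $\varepsilon$. Dividing by $|\log\varepsilon|$ and taking $\varepsilon\to 0$ gives full upper (and lower) metric mean dimension of $X_{\varphi,f}$. The main obstacle I anticipate is Phase~3/4: the gluing gap $m(\varepsilon)$ may blow up as $\varepsilon\to 0$, so the hierarchical choice of $n_k=n_k(\varepsilon)$ has to be made carefully so that the ratio $m(\varepsilon)/n_k$ stays small \emph{uniformly} in $\varepsilon$ while the separated sets survive the shadowing with distortion controlled by $\varepsilon$. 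This delicate balance between gluing gaps, separation scale, and entropy approximation is the technical heart of the argument and is where most of the bookkeeping will go.
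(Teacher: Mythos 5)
Your overall plan (Katok separated sets for two contrasting measures, alternating gluing into a Moran-type fractal $F\subset X_{\varphi,f}$, pressure distribution principle, repeat at every scale for mean dimension) matches the paper's strategy. But two steps, as written, would not go through, and one of them hides the main technical novelty of the paper.

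First, your Phase~1 is internally inconsistent. You first obtain two \emph{ergodic} measures $\mu_1,\mu_2$ with different $\varphi$-integrals, and then replace them by affine perturbations $(1-t)\mu_i+t\nu$ to raise the pressure. The result of such a perturbation is generically \emph{not} ergodic, yet Phase~2 immediately invokes Katok's entropy formula and Birkhoff/SMB on ``$\mu_i$-generic points,'' which requires ergodicity. The paper deals with this by keeping $\mu_1$ ergodic and of high pressure, taking a separate ergodic $\nu$ with a different $\varphi$-average, and then \emph{synthesizing} points that realize the non-ergodic barycenter $\mu_2=t\mu_1+(1-t)\nu$ by gluing $r_k$ segments of $\mu_1$-generic orbits with $s_k$ segments of a $\nu$-generic orbit with $r_kn_k^1/(s_k\widetilde n_k)\to t/(1-t)$ (Lemma~\ref{lemaestimSK2}). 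You would have to do something of this kind; simply applying Katok's formula to a non-ergodic $\mu_2$ is not available.

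Second, and more importantly, your Phase~3 claim that ``the map $\underline{x}\mapsto y(\underline{x})$ is injective after passing to a $(N_k,\delta/2)$-separated subset at each level'' is where the argument breaks. Under specification the transition gaps are fixed, so two gluing points $y(\underline{x})\neq y(\underline{x}')$ built from $(n_k,\delta)$-separated strings are automatically separated at matching iterates, and the cardinality lower bound transfers. Under the gluing orbit property the transition times $p_{k,r}$ are \emph{functions of the whole string}, so the blocks do not line up and separability of the $x$'s gives no separability of the $y$'s; merely ``passing to a separated subset'' gives you no lower bound at all on its cardinality, which is what the pressure estimate requires (this is exactly Remark~\ref{rmk:differencesG} in the paper). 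The paper's fix is a pigeonhole argument: the transition-time strings range over at most $(m(\vep)+1)^{N_k-1}$ possibilities, so some fixed string captures a $1/(m(\vep)+1)^{N_k-1}$ fraction of the weighted sum, and since $\log m_k/n_k\to 0$ this loss is negligible. That pigeonhole step is the heart of the adaptation from specification to gluing and is missing from your sketch. (The same issue recurs in bounding the $\nu_k$-mass of a dynamical ball in the pressure distribution principle: you need fixed transition times to identify which coordinates of a shadowing point a ball can see, cf.\ Lemma~\ref{lem-muB}.) Your Phase~4 worry about $m(\varepsilon)$ blowing up is real but secondary; the paper handles it by choosing the block lengths $N_k$ to dominate $m_k=m(\vep/2^k)$ at each level, independent of the scale used for the final mean-dimension limit.
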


Under the previous assumptions, the set of points with historic behavior for $\varphi$ is empty if and only if there exists $v\in \mathbb R^d$ so that  $\int \varphi\, d\mu=v$ for every $f$-invariant probability measure (cf. Lemma~\ref{obsat}). 
The second property is satisfied by a meager set of continuous vector valued observables as the following result shows.
Let $\mathcal M_{inv}(f)$ denote the space of $f$-invariant probabilities.

\begin{mainproposition}\label{p:A}
Let $X$ be a compact Riemannian manifold. There exists a $C^0$-Baire residual subset $\mathfrak{R} \subset \text{Homeo}(X)$
such that the following holds: for every $f\in \mathfrak{R}$ there exists a $C^0$-Baire residual subset 
$ \mathfrak{R} _f\subset C^0(X,\mathbb R^d)$ so that for any $\varphi \in \mathfrak{R} _f$ there exist
$\mu_1,\mu_2\in \mathcal M_{inv}(f)$ such that $\int \varphi \, d\mu_1 \neq \int \varphi \, d\mu_2$.
\end{mainproposition}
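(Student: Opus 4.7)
The plan is to decouple the two layers of genericity: first locate a $C^0$-residual set $\mathfrak{R}\subset\text{Homeo}(X)$ on which the space of invariant probabilities is never reduced to a single point, and then, for each $f\in\mathfrak{R}$, observe that the observables whose integral is constant on $\mathcal M_{inv}(f)$ form a closed proper linear subspace of the Banach space $C^0(X,\mathbb R^d)$, whose complement furnishes the required $\mathfrak{R}_f$.

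For the first step, when $\dim X\ge 2$ the existence of $\mathfrak{R}$ follows at once from Yano's theorem \cite{Yano}: a $C^0$-generic $f\in\text{Homeo}(X)$ has $\htop(f)=\infty$, so by the variational principle $\mathcal M_{inv}(f)$ contains ergodic measures of arbitrarily large entropy and in particular $|\mathcal M_{inv}(f)|\ge 2$. A more hands-on alternative, valid in any dimension, is to show that the set of homeomorphisms admitting two disjoint periodic orbits is $C^0$-dense (by local perturbations supported in two disjoint small balls) and $G_\delta$ (by writing the condition as a countable intersection of open constraints on approximate fixed points of suitable iterates of $f$).

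For the second step, fix $f\in\mathfrak{R}$ and consider
\[
S_f=\Big\{\varphi\in C^0(X,\mathbb R^d):\ \int\varphi\,d\mu=\int\varphi\,d\nu\ \text{ for every }\mu,\nu\in\mathcal M_{inv}(f)\Big\}.
\]
This is manifestly a closed linear subspace of $C^0(X,\mathbb R^d)$. It is \emph{proper}: choosing distinct $\mu_1,\mu_2\in\mathcal M_{inv}(f)$ granted by Step 1, the Riesz representation theorem yields $\psi\in C^0(X,\mathbb R)$ with $\int\psi\,d\mu_1\neq\int\psi\,d\mu_2$, and then $(\psi,0,\dots,0)\not\in S_f$. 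A standard argument shows that a closed proper linear subspace of a Banach space has empty interior (otherwise it would contain an open ball centered at the origin and, by homogeneity, the whole space), hence is nowhere dense. Therefore $\mathfrak{R}_f:=C^0(X,\mathbb R^d)\setminus S_f$ is $C^0$-open and dense, and by construction every $\varphi\in\mathfrak{R}_f$ satisfies $\int\varphi\,d\mu_1\neq\int\varphi\,d\mu_2$ for some $\mu_1,\mu_2\in\mathcal M_{inv}(f)$, which is the desired conclusion.

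The only genuine obstacle is Step 1, since Step 2 is soft functional analysis; once non-uniqueness of invariant measures along a residual set of homeomorphisms has been pinned down by either of the two approaches above, the two steps combine immediately to yield the proposition.
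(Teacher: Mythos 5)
Your proof is correct and shares the two-step structure of the paper's argument, but the input for the first step is genuinely different. The paper fixes the residual set $\mathfrak{R}$ by invoking \cite{PPSS} to get, generically, infinitely many periodic points of a single period~$n$, which immediately furnishes uncountably many distinct periodic measures. Your second step is a cleaner, more abstract version of what the paper does: the paper exhibits the set of ``bad'' observables $E_f$ as a subset of a countable intersection of closed affine hyperplanes (one per pair of periodic orbits) and then concludes it is nowhere dense; you observe directly that it is a proper closed linear subspace and therefore has empty interior. Those are the same underlying fact, and your formulation is the tidier one.

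The one place where your proposal has a real, if small, gap is the Yano route for Step~1. Knowing that a $C^0$-generic $f$ on a manifold of dimension at least~2 has $\htop(f)=\infty$ does not, by the variational principle alone, force $|\mathcal M_{inv}(f)|\ge 2$: the supremum $\sup_\mu h_\mu(f)=\infty$ could in principle be witnessed by a single invariant measure $\mu$ with $h_\mu(f)=\infty$, and nothing in the variational principle rules out unique ergodicity with an infinite-entropy measure. To close this you need an additional input saying that generic homeomorphisms genuinely carry many invariant measures --- exactly what \cite{PPSS} provides (or your own Option~B, which is sound modulo the compactness bookkeeping you sketch). Your Option~B and the paper's use of \cite{PPSS} are both robust; Option~A as stated needs that extra step, and I'd recommend replacing the appeal to infinite entropy by the appeal to density of periodic points in $CR(f)$ that generic homeomorphisms enjoy.
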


As a consequence of the previous results we deduce the following:

\begin{maincorollary}\label{cor:A}
Let $X$ be a compact Riemannian manifold. There exists a $C^0$-Baire residual subset 
$\widehat{ \mathfrak{R}} \subset \text{Homeo}(X) \times C^0(X,\mathbb R^d)$ so that:
\begin{enumerate}
\item 
if $C\subset CR(f)$ is a isolated chain recurrent class,  $(f,\varphi)\in \widehat{ \mathfrak{R}}$ and
$C\cap X_{\varphi,f} \neq \emptyset$ then 
$\htop(f\mid_{C\cap X_{\varphi,f}})=\htop(f\mid_C)$; 
\item if $CR(f)=X$ then $\htop(f\mid_{X_{\varphi,f}})=\htop(f\mid_C)$.
\end{enumerate}
\end{maincorollary}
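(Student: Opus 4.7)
The strategy is to assemble three ingredients already established in the paper: Corollary~\ref{lem1}, which gives the gluing orbit property on each isolated chain recurrent class for a $C^0$-residual subset of homeomorphisms; Proposition~\ref{p:A} together with Lemma~\ref{obsat}, which show that for $C^0$-generic pairs $(f,\varphi)$ one has $X_{\varphi,f}\neq\emptyset$; and Theorem~\ref{thm:B}, which under the gluing orbit property upgrades non-emptiness of $X_{\varphi,f}$ to full topological pressure, hence full topological entropy upon specializing to the zero observable.

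First I would set $\mathfrak R_0 \subset \text{Homeo}(X)$ to be the intersection of the residual subsets from Corollary~\ref{lem1} and Proposition~\ref{p:A}; it is again $C^0$-residual. For each $f \in \mathfrak R_0$, Proposition~\ref{p:A} supplies a $C^0$-residual $\mathfrak R_f \subset C^0(X,\mathbb R^d)$ of observables separating two $f$-invariant probabilities, so Lemma~\ref{obsat} forces $X_{\varphi,f}\neq\emptyset$ for every $\varphi\in\mathfrak R_f$. The candidate is
$$
\widehat{\mathfrak R} \;=\; \bigl\{(f,\varphi)\in\text{Homeo}(X)\times C^0(X,\mathbb R^d)\,:\,f\in\mathfrak R_0,\ \varphi\in\mathfrak R_f\bigr\}.
$$

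Granting its residuality for the moment, both conclusions follow in a uniform way. For (1), fix $(f,\varphi)\in\widehat{\mathfrak R}$ and an isolated chain recurrent class $C$ meeting $X_{\varphi,f}$. Corollary~\ref{lem1} ensures that $f|_C$ has the gluing orbit property; since $C$ is $f$-invariant, any $x\in C\cap X_{\varphi,f}$ has forward orbit in $C$ and thus witnesses $X_{\varphi|_C,\,f|_C}\neq\emptyset$. Theorem~\ref{thm:B} applied to the compact invariant system $(C,f|_C,\varphi|_C)$ yields that $C\cap X_{\varphi,f}$ carries full topological pressure in $C$, and specializing to the zero observable gives $\htop(f|_{C\cap X_{\varphi,f}})=\htop(f|_C)$. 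Conclusion (2) is the special case where $X$ itself is the isolated chain recurrent class under consideration (as happens, for $f\in\mathfrak R_0$, whenever $CR(f)=X$ after possibly further intersecting with a chain-transitivity residual set), so the same argument applied globally produces $\htop(f|_{X_{\varphi,f}})=\htop(f)$.

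The main obstacle I expect is the promotion of $\widehat{\mathfrak R}$, defined fiber-wise over $\mathfrak R_0$, to a residual subset of the product Polish space. I would handle this by opening up the construction of $\mathfrak R_f$ in Proposition~\ref{p:A}: write $\mathfrak R_f=\bigcap_n U_n(f)$, where $U_n(f)\subset C^0(X,\mathbb R^d)$ is the open dense set of observables strictly separating the integrals of two invariant measures that have been selected to persist under small $C^0$-perturbations of $f$, for example periodic measures produced by the gluing orbit property whose supports admit continuations in a full $C^0$-neighborhood. Using that evaluation of a continuous observable against such a continuation of periodic measures depends jointly continuously on $(f,\varphi)$, the sets
$$
\widehat U_n \;=\; \{(f,\varphi)\,:\,f\in\mathfrak R_0,\ \varphi\in U_n(f)\}
$$
are open and dense in $\text{Homeo}(X)\times C^0(X,\mathbb R^d)$, so that $\widehat{\mathfrak R}=\bigcap_n\widehat U_n$ is residual. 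An alternative route is to invoke the Kuratowski--Ulam theorem and reduce fiber-wise residuality to residuality in the product via a Borel-measurability check on $\widehat{\mathfrak R}$.
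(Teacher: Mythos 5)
Your proposal follows the same skeleton as the paper's proof: intersect the residual set of Corollary~\ref{lem1} with that of Proposition~\ref{p:A}, use Lemma~\ref{obsat} to convert the conclusion of Proposition~\ref{p:A} into nonemptiness of $X_{\varphi,f}$, and then apply Theorem~\ref{thm:B} on the isolated chain recurrent class $C$ (respectively on $X$ when $CR(f)=X$, noting that $X$ is then an isolated chain recurrent class of itself). The paper simply \emph{asserts} that $\widehat{\mathfrak R}=\bigcup_{f}\{f\}\times\mathfrak R_f$ is Baire residual in the product, while you correctly flag this as the step that needs an argument, which is a genuine improvement in rigor.

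Of your two proposed routes to the product residuality, only the second one is viable. The first route relies on ``periodic measures produced by the gluing orbit property whose supports admit continuations in a full $C^0$-neighborhood''; but periodic orbits of homeomorphisms are not $C^0$-robust in general (a $C^0$-small perturbation can destroy a non-isolated or index-zero periodic orbit), so the joint continuity you invoke for the sets $\widehat U_n$ fails. The Kuratowski--Ulam route does work and should be stated as the primary argument: the set $W=\{(f,\varphi):\exists\,\mu_1,\mu_2\in\mathcal M_{inv}(f)\text{ with }\int\varphi\,d\mu_1\neq\int\varphi\,d\mu_2\}$ equals $\bigcup_f\{f\}\times\mathfrak R_f$ (since $\mathfrak R_f=C^0(X,\mathbb R^d)\setminus E_f$ in the proof of Proposition~\ref{p:A}), and $W$ is the projection of a Borel subset of $\text{Homeo}(X)\times C^0(X,\mathbb R^d)\times\mathcal M(X)^2$, hence analytic, hence has the Baire property; since its fibers over the residual set $\mathfrak R$ are open and dense, Kuratowski--Ulam gives that $W$ is residual, and then $\widehat{\mathfrak R}=(\mathfrak R_0\times C^0(X,\mathbb R^d))\cap W$ is residual. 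Also, your parenthetical about ``further intersecting with a chain-transitivity residual set'' is unnecessary: for a connected compact manifold $X$, $CR(f)=X$ already forces $X$ to be a single (hence trivially isolated) chain recurrent class, because by Conley theory the space of chain components is totally disconnected, so a connected $CR(f)$ must be one chain component.
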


\subsection{Overview in the proof}

The first ingredient in the proof of Theorems~\ref{thm:C} and \ref{thm:D}
relies on the fact that the dynamics restricted to isolated chain recurrent classes of $C^0$-generic homeomorphisms
satisfies the gluing orbit property.
This will ensure that any connected subset of the pointwise rotation set can be realized by the (pointwise) rotation set
obtained along the orbit of a single point. Such a reconstruction of rotation vectors as the orbit of a single point
is formalized in
Theorems~\ref{thm:A} and \ref{thm:B}. In comparison
with the former, extra difficulties arise from the fact that the dynamics and the observables are not decoupled and the
fact that, in the case of dissipative homeomorphisms,
the chain recurrent classe(s) that concentrate topological pressure
vary as the potential changes.
One could ask whether the Baire generic conclusion of Theorem~\ref{thm:D} could extend to a generic set of points
in the whole chain-recurrent set (or the non-wandering set). For instance, it is easy to construct an Axiom A diffeomorphism
$f$ on $\mathbb S^2$ so that $\Omega(f)=\{p_1\} \cup \Lambda \cup \{p_2\}$, where $p_1$ is a repelling fixed point, $p_2$ is an attracting fixed point and $\Lambda$ is an horseshoe.
Recall that $f$ is an Axiom A diffeomorphism if the set of periodic points is dense in the non-wandering set $\Omega(f)$ and
$\Omega(f)$ is hyperbolic (we refer, for instance, to \cite{Smale} for the construction of such examples). The existence of a filtration for homeomorphisms $C^0$-close to $f$ imply that the Baire generic subset in the statement of Theorem~\ref{thm:D} can only be contained in a neighborhood of the basic piece $\Lambda$ for all $C^0$-close homeomorphisms. Moreover,  the assertion concerning positive entropy seems optimal. Indeed, it may occur
that there exists a unique chain recurrent class of largest positive topological entropy and whose (restricted) rotation set have empty interior or even reduce to a point, in the case of pseudo-rotations. Related constructions include \cite{09ENA,Rees}.

Theorems~\ref{thm:E} relies on the fact that under the specification, or the gluing orbit property,
the space of periodic measures is dense in the space of all invariant measures. Under any of these assumptions,
the generalized rotation set coincides with the rotation set obtained by means of invariant measures, thus it is convex.

Theorems~\ref{thm:A} and \ref{thm:B} provide three distinct measurements of the topological complexity of the set of points with historic behavior. Their proofs use the construction of points with non-convergent Birkhoff averages by exploring the oscillatory
behavior in the Birkhoff averages of points that shadow pieces of orbits that are typical for invariant measures with different
space averages. The existence of such points is granted by the gluing orbit property.

If, on the one hand, the proof of Theorems~\ref{thm:A} and ~\ref{thm:B} are inspired by \cite{Barreira,LW,TMP},
the arguments in the proof of Theorem~\ref{thm:B} is much more challenging and presents novelties on how to
construct a `large amount' of points whose
finite pieces of orbits up to time $n$ have a controlled behavior and that are separated by the dynamics. This is crucial to
estimate topological pressure and metric mean dimension.
While the construction of points with non-convergent behavior can be obtained as a consequence of the gluing orbit property,
it is natural to inquire on the control on the number of such distinct orbits (measured in terms of $(n,\vep)$-separability).
We overcome this issue by selecting of a large amount of orbits that are glued the same (bounded) time. Since this bound depends
on $\vep$, so does the estimates on the number of $(n,\vep)$-separated points with controlled recurrence.
This requires shadowing times to be chosen large in order to compensate the latter. In \cite{DOT} the authors obtain similar flavored results using shadowing. Although both occur properties hold $C^0$-generically there are several examples that satisfy the gluing orbit property and fail to satisfy shadowing, which
justifies our approach.

\section{Preliminaries}\label{sec:prelim}

\subsection{The space of homeomorphisms homotopic to identity \label{shhi}}

Let $X$ be a compact metric space.
Let $\text{Homeo}(X)$ denote the space of homeomorphisms on $X$ endowed with the $C^0$-topology given by the metric
$$d_{C^0}(f, g) = \max\{\sup\{d(f(x), g(x)): x\in X \}, ~\sup\{d(f^{-1}(x), g^{-1}(x)): x\in X \}\}$$ for every  $f, g \in \text{Homeo}(X)$.
Two homeomorphisms $f, g : X \to X$ are \emph{homotopic} if there exists a continuous function
$H : [0,1] \times X \to X$ (homotopy between $f$ and $g$)
such that $H(0,x) = f(x)  \,\text{and}\, H(1,x) = g(x)$ for every $x\in X$.
If $H$ is a homotopy between $f$ and $g$, then it defines a family of continuous functions $H_t : X \to X$ given by $H_t (x) = H(t, x)$.  Two homeomorphisms $f, g : X \to X$ are \emph{isotopic} if there exists a homotopy $H$ between $f$ and $g$ such that for every $t\in [0, 1]$ the map $H_t: X \to X$ is a homeomorphism. It follows  from \cite[Theorem 6.4]{EPS}
 that the previous concepts coincide for homeomorphisms on $\mathbb R^2$. More precisely:

\begin{theorem} \label{epst} If $h$ is a homeomorphism of $\mathbb R^2$ onto itself,
homotopic to the identity then $h$ is isotopic to the identity.
\end{theorem}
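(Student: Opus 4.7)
The plan is to reduce the statement to the classical fact that the group of orientation-preserving homeomorphisms of $\mathbb R^2$ is path-connected in the $C^0$-topology, which is essentially the content of Epstein's work. First, I would verify that any $h \in \text{Homeo}(\mathbb R^2)$ homotopic to the identity is necessarily orientation-preserving: the homotopy $H:[0,1]\times \mathbb R^2 \to \mathbb R^2$ is a continuous family joining $\text{id}$ to $h$, and since the local (Brouwer) degree at a regular value is a homotopy invariant, $h$ must have degree $+1$ at every point, and hence preserve orientation. This is the easy direction; the content of the theorem lies in the converse.

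Second, since a translation $T_v(x)=x+v$ is trivially isotopic to the identity through the straight-line isotopy $(s,x)\mapsto x+sv$, I would precompose $h$ with the translation moving $h(0)$ to the origin and reduce to the case $h(0)=0$. Extending $h$ to the one-point compactification $\hat h : S^2 \to S^2$ (which is a homeomorphism because $h$ is proper), I obtain an orientation-preserving homeomorphism of $S^2$ fixing both $0$ and $\infty$. The goal then becomes to construct an isotopy from $\hat h$ to the identity through homeomorphisms of $S^2$ fixing $\infty$, since any such isotopy restricts to the desired isotopy of $h$ to the identity on $\mathbb R^2$.

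The core of the proof consists in using the Schoenflies theorem to isotope $\hat h$, while keeping $\infty$ fixed throughout, to a homeomorphism that agrees with the identity outside a compact disk $D$ around the origin. Once this is achieved, the Alexander trick $H_t(x) = t\, h(x/t)$ for $t\in (0,1]$ with $H_0=\text{id}$, applied on $D$ and extended by the identity outside, yields the required isotopy. The main obstacle is the Schoenflies-based straightening step: one must isotope the images of a suitable finite collection of arcs and Jordan curves back to their originals in a way that depends continuously on the deformation parameter, exploiting the simple connectedness of $\mathbb R^2$ and the fact that every Jordan curve bounds a disk. This surgical bookkeeping, and the uniform control needed to glue the local isotopies into a global one, is precisely what Epstein's normalization procedure in \cite{EPS} is designed to carry out.
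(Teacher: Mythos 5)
The paper does not prove this statement; Theorem~\ref{epst} is cited verbatim from \cite[Theorem~6.4]{EPS} with no argument supplied, so there is no ``paper's own proof'' to compare against. With that said, your sketch has a genuine gap at the very first step. Since $\mathbb{R}^2$ is convex, the straight-line homotopy $H_t = (1-t)\,\mathrm{id} + t\,h$ joins the identity to \emph{any} continuous self-map of $\mathbb{R}^2$; in particular, ``homotopic to the identity'' is vacuous for self-maps of the plane, and it does not single out orientation-preserving homeomorphisms. Your appeal to homotopy invariance of the Brouwer degree fails here: degree is a \emph{proper} homotopy invariant, and the straight-line homotopy from $\mathrm{id}$ to the reflection $(x,y)\mapsto(x,-y)$ (passing through non-injective maps) shows the degree can flip under an ordinary homotopy. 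To make the statement non-vacuous and your paragraph correct, you must read the hypothesis as proper homotopy (equivalently, impose orientation-preservation directly), which is what the compactified picture on $S^2$ implicitly does.

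Beyond this, the rest of the outline points at the right ingredients (properness of $h$, compactification to $S^2$, Schoenflies, Alexander trick) but defers the entire substance of the theorem to ``Epstein's normalization procedure.'' Isotoping $\hat h$ rel $\infty$ to a map that is the identity outside a compact disk $D$ already contains a second Alexander trick on the complementary disk $S^2\setminus\mathrm{int}(D)$ (with cone point $\infty$), after Schoenflies has straightened the Jordan curve $\hat h(\partial D)$ and a further isotopy has straightened the induced map on $\partial D$; none of this is carried out, and asserting it is, in effect, restating the result. As written the proposal is a plan of attack rather than a proof, and its opening reduction does not stand as argued.
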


Let $\text{Homeo}_{0}^{}(X) \subset \text{Homeo}(X)$ denote the space of homeomorphisms on $X$ homotopic to the identity and let $\text{Homeo}_{0,\lambda}^{}(X)$ be the subspace of $\text{Homeo}_{0}^{}(X)$ formed by the area-preserving homeomorphism ($f$ is area-preserving if $\text{Leb}(f^{-1}(A))=\text{Leb}(A)$ for all $A\subset X$ measurable). In other words, $\text{Homeo}_{0,\lambda}^{}(X):= \text{Homeo}_0^{} (X) \cap \text{Homeo}_\lambda^{} (X)$, where $\text{Homeo}_\lambda^{} (X)$ consisting of area-preserving homeomorphisms. Theorem~\ref{epst} ensures that $\text{Homeo}_{0}^{}(\mathbb T^2) \subset \text{Homeo}_{}^{}(\mathbb T^2)$ is an open set and, consequently, $\text{Homeo}_{0, \lambda}^{}(\mathbb T^2)$ is $C^0$-open in $\text{Homeo}_{\lambda}^{}(\mathbb T^2)$.

\subsection{Rotation sets for homeomorphisms in $\mathbb T^2$}\label{subsec:rotation-sets}

In this subsection we recall briefly some notions and properties of rotation sets (see \cite{MZ,MZ2}
for more details and proofs).
Let $f : X\to X$ be a continuous map and $\varphi : X \to\mathbb R^d~(d\geq 1)$ be a continuous function.
The \emph{rotation set} of $\varphi$,  denoted by $ \rho(\varphi)$, is the set of limits of convergent sequences
$
(\frac{1}{n_{i}}\sum_{i=0}^{n_{i}-1}\varphi (f^{i}(x_i)))_{i=1}^{\infty}
$,
where $\displaystyle  n_{i}\rightarrow \infty$ and  $x_i \in X$.
Given  $x\in X$, let $\mathcal V_\varphi(x)$ denote the accumulation points of the sequence
$
(\frac{1}{n}\sum_{i=0}^{{n}-1}\varphi (f^{i}(x)))_{n\ge 1}
$,
and let
$$
\mathcal V_{\varphi} := \bigcup_{x\,\in\,X}~\mathcal V_{\varphi}(x)
$$
be the \emph{pointwise rotation set} of $\varphi$. In the case that $\mathcal V_{\varphi}(x)=\{v\}$ we say that $v$
is the \emph{rotation vector} of $x$.
Finally, given an $f$-invariant probability measure $\mu$ on $X$ we say that $\int \varphi d\mu$ is the \emph{rotation vector of $\mu$}
and denote it by $\mathcal V_{\varphi}(\mu)$.

\smallskip

In the special case that $X=\mathbb T^2=\mathbb R^2
/ \mathbb Z^2$, $f\in \text{Homeo}_{0}^{}(\mathbb T^2)$, $\pi: \mathbb R^2\to \mathbb T^2$ is the natural projection, $F: \mathbb R^2 \to \mathbb R^2$ a lift for $f$ (ie.  $f\circ \pi = \pi \circ F$), and
the displacement function $\varphi_F: \mathbb R^2 \to \mathbb R^2$ is defined by~$ \varphi_F (\pi(z)) = F(z) - z$, then
$$
\frac{1}{n_{i}}\sum_{i=0}^{n_{i}-1}\varphi_F (f^{i}(\pi(z_i^{})))
	= \frac{1}{n_{i}}\sum_{i=0}^{n_{i}-1} ( F^{i+1}(z_i^{}) - F^{i}(z_i^{}) )
	= \frac{F^{n_i^{}}(z_{i}^{})-z_{i}^{}}{n_{i}^{}}
$$
with $z_i \in \mathbb R^2$ and $n_i\ge 1$.
Using that $\varphi_F$ is constant on $\pi^{-1}(x)$ for every $x\in \mathbb T^2$ it induces a continuous observable
in $\mathbb R^2$, which we still denote by $\varphi_F$ by some abuse of notation.
The \emph{rotation set} of $F$ (denoted by $\rho(F)$) defined in \cite{MZ2} as the limits
of converging sequences
$$
\Big( \frac{F^{n^{}}(z^{})-z^{}}{n^{}} \Big)_{z\in \mathbb R^2, \,n\ge 1}.
$$

Given $x\in \mathbb R^2$, let $\rho(F,x)=\mathcal V_{\varphi_F}(x)$ and $\rho_p(F)=\mathcal V_{\varphi_F}$ denote the \emph{pointwise rotation set} of $F$ along the orbit of $x$ and the \emph{pointwise rotation set}
of $F$ as defined before with respect to the observable $\varphi_F$,
which fit in the previous context.

The rotation set induced by the ergodic probability measures is
$\rho_{erg}(F):=\{\int \varphi_F \, d\mu : \mu \in \mathcal M_{e}(f)\},$
where $\mathcal M_{e}(f)$ denote the set of $f$-invariant and ergodic probability measures
(analogous for $\rho_{inv}(F)$ using the space $\mathcal M_{inv}(f)$ of $f$-invariant probability measures).
We recall that
\begin{equation}\label{eq:rel-rot}
\rho_{erg}(F)\subseteq \rho_{p}^{}(F)\subseteq\rho(F) \subseteq \rho_{inv}(F)
\end{equation}
and that $\rho_{inv}(F)$ is convex.  Moreover, if   $f\in \text{Homeo}_{0}(\mathbb T^2)$ then
$$
\rho(F)=\text{Conv} ~\rho(F) =\text{Conv} (\rho_{p}(F)) = \text{Conv} ~(\rho_{erg}(F))=\rho_{inv}(F)
$$
where $\text{Conv} (K)$ denotes the convex hull of $K$ (see~\cite{MZ}).

\subsection{Shadowing, specification and gluing orbit properties}\label{gop}

The concept of reconstruction of orbits in topological dynamics gained substantial importance for its
wide range of applications in ergodic theory. Among these properties it is worth mentioning the
shadowing, specification and the gluing orbit properties. Throughout this subsection let
$f: X\to X$ be a continuous map on a compact metric space $X$.

First we recall the definition of the shadowing property. Given $\delta  > 0$,
we say that $(x_k)_k$ is a \emph{$\delta$-pseudo-orbit}  for $f$ if $d(f(x_k), x_{k+1}) < \delta$ for every $k\in\mathbb Z$.
If there exists $N>0$ so that $ x_k^{} = x_{k+N}^{}$ for all $k\in \mathbb Z$ we say that $(x_k)_k$ is a \emph{periodic $\delta$-pseudo-orbit}.

\begin{definition}\label{def:shadow}
We say that $f$ satisfies the (periodic) \emph{shadowing property} if for any $\vep > 0$ there exists $\delta>0$ such that
for any (periodic) $\delta$-pseudo-orbit $(x_k)_k$ there exists $y\in X$ satisfying $d(f^{k}(y), x_{k})< \vep $ for all $k\in \mathbb Z$.
\end{definition}

Pseudo-orbits are also a fundamental tool to decompose the ambient space according to classes.
Given a homeomorphism $f\in \text{Homeo}(X)$,
we say that $x\sim y$ if for any $\de>0$ there exists a $\delta$-pseudo-orbit 
$(x_k)_{1\le k \le n}$ so that $x_1=x$ and $x_2=y$. A point is called \emph{chain recurrent} if $x\sim x$, and we denote
by $CR(f)$ the chain recurrent set. Notice that $\sim$ is an equivalence relation.  
A \emph{chain recurrent class} $C\subset CR(f)$ is a maximal subset so that $x\sim y$ for every $x,y\in C$. 
It is known that the non-wandering set $\Omega(f)$
and the chain recurrent set $CR(f)$ of a $C^0$-generic homeomorphism $f$ coincide (cf. \cite[Theorem~1]{PPSS}). 
Finally,  we say that a chain recurrent class $C\subset CR(f)$ is \emph{isolated} if $\dist_H(C, CR(f)\setminus C)>0$,
where $\dist_H(\cdot, \cdot)$ denotes the Hausdorff distance between sets.

\medskip
The specification property, introduced by Bowen \cite{Bo71},
roughly means that an  arbitrary number of pieces of orbits can be ``glued together" to obtain a real orbit that shadows the previous ones with a prefixed number of iterates in between.  Moreover, it configures itself as an indicator of chaotic behavior (e.g. it implies the dynamics to have positive topological entropy).

\begin{definition}\label{def:spec}
We say that $f$ satisfies the \emph{specification property }  if
for any $\vep > 0$ there exists an integer $m = m(\vep) \geq 1$ so that for any points $x_1^{}, x_2^{}, \dots , x_k^{} \in X$ and for any positive integers $n_1^{}, \dots , n_k^{}$ and $0 \leq p_1^{}, \dots , p_{k-1}^{}$ with $p_i \geq m(\vep)$ there exists a point $y \in X$ such that $\displaystyle d(f^j (y), f^j (x_1^{}))\leq \vep$ for every $0 \leq j \leq n_1^{}$ and
$$
d(f^{j+n_1^{}+p_1^{}+ ~\dots~ +n_{i-1}^{}+p_{i-1}^{}} (y), f^j (x_i^{}))\leq \vep
$$
\noindent for every $2 \leq i \leq k$ and $0 \leq j \leq n_i^{}$.
\end{definition}

Finally, the gluing orbit property, introduced in \cite{BV},
bridges between completely non-hyperbolic dynamics (equicontinuous and minimal dynamics \cite{TBTV,Sun}) and uniformly hyperbolic dynamics (see e.g. \cite{BV}). Both of these properties imply on a rich structure on the dynamics and the space of invariant measures (see e.g. \cite{DKS,TBTV}).

\begin{definition}\label{def:gluing}
We say that $f$ satisfies the \emph{gluing orbit property} if for any $\vep > 0$ there exists an integer $m = m(\vep) \geq 1$ so that for any points $x_1^{}, x_2^{}, \dots , x_k^{} \in X$ and any positive integers $n_1^{}, \dots , n_k^{}$ there are $0 \leq p_1^{}, \dots , p_{k-1}^{} \leq m(\vep)$ and a 
point $y \in X$ so that $\displaystyle d(f^j (y), f^j (x_1^{}))\leq \vep$ for every $0 \leq j \leq n_1^{}$ and
$$
d(f^{j+n_1^{}+p_1^{}+ \dots +n_{i-1}^{}+p_{i-1}^{}} (y), f^j (x_i^{}))\leq \vep
$$
for every $2 \leq i \leq k$ and $0 \leq j \leq n_i^{}$.
If, in addition, $y\in X$ can be chosen periodic with period $\sum_{i=1}^k (n_i+p_i)$
for some $0 \leq p_{k}^{} \leq m(\vep)$ then we say that $f$ satisfies the \emph{periodic gluing orbit property}.
\end{definition}

It is not hard to check that irrational rotations satisfy the gluing orbit property \cite{TBTV}, but fail to satisfy the shadowing
or specification properties. Partially hyperbolic examples exhibiting the same kind of behavior have been constructed in \cite{BTV2}.

\begin{remark}\label{def:gluing-local}
It is clear that the specification property implies the gluing orbit property, which implies transitivity. It will be useful to consider the (periodic) gluing orbit property on compact invariant subsets $\Gamma$, in which case we demand only Definition~\ref{def:gluing} to hold for every small $\vep$ but we require the shadowing point $z$ to belong to $\Gamma$.
\end{remark}

\subsection{Pressure, entropy and mean dimensions}\label{sec:entropy-mmd}

In this subsection we recall two important measurements of topological complexity, namely the concepts of topological entropy and metric mean dimension, and introduce a relative notion of the later. Our interest in the second notion is that, while a dense set of homeomorphisms on a compact Riemannian manifold have positive and finite topological entropy (by denseness of $C^1$-diffeomorphisms) it is known that typical homeomorphisms may have infinite topological entropy. In opposition, metric mean dimension is always bounded by the dimension of the compact manifold and can be seen as a smoothened measurement of topological complexity
as we now detail.

\subsubsection*{Topological pressure}
Let $(X,d)$ be a compact metric space and $\psi \in C^0(X,\mathbb R)$. Given $\vep > 0$~and $n \in \mathbb N$, we say that $E\subset X$ is $(n, \vep)$-separated if for every $x \neq y\in E$ it holds
that $d_n (x, y)> \vep$, where $d_n (x, y) = \max\{d(f^j (x), f^j (y)); j = 0, \dots, n - 1\}$ is the Bowen's distance. The sets $B_{n}(x, \vep) =\{ y\in X: d_n^{}(x, y) < \vep\}$ are called Bowen dynamic balls. 
The \emph{topological pressure} of $f$ with respect to $\psi$ is defined by
\begin{eqnarray*}
\displaystyle P_{top}(f,\psi) = \lim_{\vep \to 0} \limsup_{n \to \infty} \frac{1}{n} \log \sup_{E} \sum_{x\in E} e^{S_n\psi(x)},
\end{eqnarray*}
where $S_n\psi(x)=\sum_{j=0}^{n-1} \psi(f^j(x))$ and the supremum is taken over every $(n,\vep)$-separated sets $E$ contained in $X$. In the case that $\psi\equiv 0$, if $s(n,\vep)$ denotes the maximal cardinality of a $(n, \vep)$-separated subset of $X$, then the \emph{topological entropy} is defined by
\begin{eqnarray*}
\displaystyle h_{top}(f) = \lim_{\vep \to 0} \limsup_{n \to \infty} \frac{1}{n} \log s(n,\vep).
\end{eqnarray*}
The previous notion does not depend on the metric $d$ and is a topological invariant. Moreover, by the classical variational principle for the pressure, it holds that
$
P_{top}(f, \psi) = \sup\{h_\mu (f) + \int \psi d\mu : \mu\in \mathcal M (f)\}.
$
However, the topological entropy of $C^0$-generic homeomorphisms on a closed manifold of dimension at least two
is infinite \cite{Yano} (the same holds for the topological pressure as a consequence of the variational principle),
in which case neither the topological entropy nor topological pressure  can distinguish such dynamics.

\subsubsection*{Topological and metric mean dimension}
Gromov \cite{Gro} proposed an invariant for dynamical systems called \emph{mean dimension}, that was further studied by Lindenstrauss and Weiss \cite{LindW}. The upper and lower \emph{metric mean dimension}, which may depend on the metric, are defined in \cite{LindT,LindW} by
\begin{eqnarray*}
\displaystyle \overline{\text{mdim}}(f)~ = \lim_{\vep \to 0} \frac{\overline{\lim}_{\substack{n \to \infty}} \frac{1}{n} \log s(n,\vep)}{-\log \vep}
\end{eqnarray*}
and
\begin{eqnarray*}
\displaystyle \underline{\text{mdim}}(f)~ = \lim_{\vep \to 0} \frac{\underline{\lim}_{\substack{n \to \infty}} \frac{1}{n} \log s(n,\vep)}{-\log \vep},
\end{eqnarray*}
respectively. Observe that the latter quantitities are only meaningful whenever $f$ has infinite topological entropy.
In the case that the metric space satisfies a tame growth of covering numbers, the the metric mean dimension satisfies a variational principle involving a concept of measure theoretical mean dimension (cf. \cite{LindT}).

\subsubsection*{Relative metric mean dimension}

Since we aim to describe the topological complexity of (not necessarily compact) $f$-invariant subsets we now introduce a concept of relative metric mean dimension using a Carath\'eodory structure. Let $Z \subset X$ be an $f$-invariant Borel set. Given $s \in \mathbb R$ and $\psi\in C^0(X,\mathbb R)$ define
$$
\displaystyle Q(Z, \psi, s, \Gamma) = \!\!\!\sum_{B_{n_i}^{} (x_i^{}, \vep)\in \Gamma} \!\! e^{-s\, n_i^{}\, + S_{n_i}\psi(B_{n_i}^{} (x_i^{}, \vep))}
\;
\text{and}
\;
\displaystyle M(Z, \psi, s, \vep, N) \!= \! \inf_{\Gamma}\left\{ Q(Z, \psi, s, \Gamma)\right\},
$$
where $S_{n_i}\psi(B_{n_i}^{} (x_i^{}, \vep)):=\sup_{x\in B_{n_i}^{} (x_i^{}, \vep)}\; \sum_{k=0}^{n_i^{}-1}\psi(f^k(x)))$
and where the infimum is taken over all countable collections $\Gamma = \left\{B_{n_i} (x_i, \vep)\right\}_i$ that cover $Z$ and so that $n_i \geq N$. Since the function $M(Z, \psi, s, \vep, N)$ is non-decreasing in $N$ the limit $\displaystyle m(Z, \psi,  s, \vep) = \lim_{N\to \infty} M(Z, \psi, s, \vep, N)$ does exist.
Then let
\begin{eqnarray*}
P_{Z}(f, \psi, \vep) = \inf\{s \in \mathbb R \colon   m(Z, \psi,  s, \vep)  = 0\}
	= \sup\{ s \in \mathbb R \colon   m(Z, \psi,  s, \vep)  = \infty\}.
\end{eqnarray*}
The existence of $\displaystyle P_{Z}(f, \psi, \vep)$ follows by the Carath\'eodory structure
\cite{PesinB}.
The (relative) \emph{topological pressure } of $f$ on $Z$ with respect to $\psi$ is defined by
$$
\displaystyle P_{Z}(f, \psi) = \lim_{\vep \to 0} P_{Z}(f, \psi, \vep).
$$
We set $h_{Z}(f,\vep)=\displaystyle P_{Z}(f, 0,\vep)$ for every $\vep>0$ and define the \emph{relative entropy} of $f$ on $Z$ by $h_{Z}(f)=\displaystyle P_{Z}(f, 0)$ (which corresponds to the potential $\psi\equiv 0$).

The upper and lower \emph{relative metric mean dimension} of $Z$ are
\begin{eqnarray*}
\displaystyle \overline{\text{mdim}}_{Z}^{}(f) = \overline{\lim}_{\vep \to 0} \frac{h_{Z}(f, \vep)}{-\log \vep}
	\quad \text{and} \quad
\displaystyle \underline{\text{mdim}}_{Z}^{}(f) = \underline{\lim}_{\vep \to 0} \frac{h_{Z}(f, \vep)}{-\log \vep}
\end{eqnarray*}
respectively. If the previous limits do exist we represent simply by $\text{mdim}_{Z}^{}(f)$ and refer to this
as the relative metric mean dimension of $Z$.

\begin{definition}
We say that the $f$-invariant subset $Z\subset X$ has full topological entropy if $h_Z(f) =\htop(f)$. We say that the $f$-invariant subset $Z\subset X$ has full metric mean dimension if $\underline{mdim}_Z(f)=\underline{mdim} (f)$ and $\overline{mdim}_Z(f)=\overline{mdim} (f)$.
\end{definition}

\begin{remark} \label{remhtop=hX}
If $f : X \to X$ is a continuous map on a compact metric space and $\psi\in C^0(X,\mathbb R)$ then $P_{top}^{}(f,\psi) = P_{X}^{}(f,\psi)$. Moreover, if the limits exist and coincide
then $\text{mdim}_{X}^{}(f) = \text{mdim}~(f).$ This follows from the fact that $h_{X}^{}(f, \vep) =  h_{top}^{}(f, \vep)$
for any $\vep>0$, which can be read from the proof of \cite[Proposition 4 ]{PP} (actually in \cite{PP} the authors use the definition of entropy using coverings and prove that $h_{X}^{}(f, \cU) =  h_{top}^{}(f, \cU)$ for every open cover $\cU$).
\end{remark}

\begin{remark}
The notion of Hausdorff dimension also involves a Carath\'eodory structure, associated to the function
$
Q(Z, s, \Gamma) = \sum_{B_{n_i} (x_i, \vep)\in \Gamma} \text{diam}(B_{n_i} (x_i, \vep))^s
$
(see \cite[Section~6]{PesinB}). Inspired by \cite{Barreira} we expect that for continuous and transitive maps on the interval (these satisfy the gluing orbit property) the  set of points with historic behavior is either empty or to have Hausdorff dimension equal to one. We do not claim or prove this fact here.
\end{remark}

We use the following generalization of Katok's formula for pressure:

\begin{proposition} \cite[Proposition 2.5]{TMP} \label{TMP} Let $(X, d)$ be a compact metric space, $f$ be a continuous map on $X$
and $\mu$ be an $f$-invariant, ergodic  probability. Given $\vep > 0$, $\gamma\in (0, 1)$ and  $\psi\in C^0(X,\mathbb R)$ set
$
N^{\mu} (\psi, \gamma, \vep, n) = \inf_E\sum_{x\in E}\exp\big\{\sum_{i = 0}^{n-1} \psi(f^{i} (x))\big\},
$
where the infimum is taken over all sets $E$  that
$(n, \vep)$-span a set $Z$ with $\mu(Z) \geq 1-\gamma$. Then
\begin{eqnarray*}
h_\mu^{} (f) +\int \psi d\mu = \lim_{\vep \to 0}\liminf_{n\to \infty} \frac{1}{n} \log N^\mu (\psi, \gamma, \vep, n).
\end{eqnarray*}
\end{proposition}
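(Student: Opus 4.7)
The plan is to establish both bounds separately, relying on the Brin–Katok local entropy formula and Birkhoff's ergodic theorem applied to the observable $\psi$; the two ingredients together essentially reduce the proof to a careful packing/covering argument on the good set where both limits have already stabilized.

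For the lower bound $\liminf_{n\to\infty} \frac{1}{n}\log N^\mu(\psi,\gamma,\vep,n) \ge h_\mu(f) + \int\psi\,d\mu$: fix $\delta>0$. By Brin–Katok and Birkhoff (using that $\mu$ is ergodic), for $\mu$-a.e.\ $x$ one has $-\frac{1}{n}\log\mu(B_n(x,\vep)) > h_\mu(f)-\delta$ and $\left|\frac{1}{n}S_n\psi(x) - \int\psi\,d\mu\right|<\delta$ for all $n$ large enough. Egorov's theorem then produces a set $G$ with $\mu(G)\ge 1-\gamma/4$ on which both estimates hold uniformly for $n\ge N_0$. If $E$ is any $(n,\vep)$-spanning set of $Z$ with $\mu(Z)\ge 1-\gamma$, then $\{B_n(x,\vep)\}_{x\in E}$ covers $Z\cap G$, a set of measure at least $1-5\gamma/4$, so $|E|\,e^{-n(h_\mu(f)-\delta)}\ge 1-5\gamma/4$. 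For each $x\in E$ whose Bowen ball meets $G$, uniform continuity of $\psi$ yields $S_n\psi(x)\ge n\bigl(\int\psi\,d\mu -\delta -\omega_\psi(\vep)\bigr)$, where $\omega_\psi$ is the modulus of continuity of $\psi$. Summing over $E$ and letting first $n\to\infty$, then $\vep\to 0$, and then $\delta\to 0$ gives the claimed lower bound.

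For the upper bound one has to exhibit, for infinitely many (in fact all large) $n$, a spanning set whose weighted sum does not exceed $e^{n(h_\mu(f)+\int\psi\,d\mu + O(\delta))}$. Pick a finite measurable partition $\xi$ with $\mu(\partial\xi)=0$ and $\mathrm{diam}(\xi)<\vep$; by Shannon–McMillan–Breiman, for $\mu$-a.e.\ $x$, $-\frac{1}{n}\log\mu(\xi_n(x))\to h_\mu(f,\xi)$, and by refining $\xi$ one may arrange $h_\mu(f,\xi)\ge h_\mu(f)-\delta$. Let $\mathcal{G}_n$ be the set of $\xi_n$-atoms $C$ with $\mu(C)\ge e^{-n(h_\mu(f)+\delta)}$ that meet the Birkhoff-good set. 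Their union has $\mu$-measure at least $1-\gamma$ for $n$ large; their cardinality is at most $e^{n(h_\mu(f)+\delta)}$. For each such $C$ pick a representative $x_C\in C$ with $S_n\psi(x_C)\le n\bigl(\int\psi\,d\mu+\delta\bigr)$. The collection $E=\{x_C\}$ is $(n,\vep)$-spanning on a set of measure $\ge 1-\gamma$, and
\begin{eqnarray*}
\sum_{x\in E} e^{S_n\psi(x)} \;\le\; e^{n(h_\mu(f)+\delta)}\,e^{n(\int\psi\,d\mu + \delta)} \;=\; e^{n(h_\mu(f) + \int\psi\,d\mu + 2\delta)},
\end{eqnarray*}
giving the matching upper bound after $\delta\to 0$.

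The main technical obstacle is that $\psi$ is merely continuous, so one cannot directly compare $S_n\psi(x)$ and $S_n\psi(y)$ for $y$ in the Bowen ball $B_n(x,\vep)$ or in the same $\xi_n$-atom. The fix is the uniform-continuity bound $|S_n\psi(x)-S_n\psi(y)|\le n\,\omega_\psi(\vep)$, which controls the oscillation by a quantity that is linear in $n$ but vanishes as $\vep\to 0$; this is why one takes the external limit $\vep\to 0$ and why the formula is insensitive to replacing suprema by values at individual points. A secondary delicate point is the construction of the partition $\xi$ with simultaneously small diameter, negligible boundary, and $h_\mu(f,\xi)$ close to $h_\mu(f)$ — standard in Katok's original argument, and dependent on ergodicity of $\mu$ to convert $\mu$-a.e.\ Brin–Katok/SMB limits into uniform control via Egorov.
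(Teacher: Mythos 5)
This statement is quoted from \cite[Proposition 2.5]{TMP} and the paper does not reprove it, so there is no in-paper argument to compare against; a Katok-style covering argument of the kind you give is indeed the standard route. Your upper bound via Shannon--McMillan--Breiman and a small-diameter partition is correct: the observation that $\diam(\xi)<\vep$ forces each $\xi_n$-atom inside the Bowen $(n,\vep)$-ball of any of its points is exactly what makes the chosen representatives a spanning set, and the cardinality and Birkhoff estimates are standard.

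The lower bound as written has a genuine gap. Egorov gives you a set $G$ on which $\mu(B_n(y,\vep))\le e^{-n(h_\mu(f)-\delta)}$ for $y\in G$ and $n$ large, but the inequality $|E|\,e^{-n(h_\mu(f)-\delta)}\ge\mu(Z\cap G)$ tacitly applies that estimate to the balls $B_n(x,\vep)$ centered at $x$ in the spanning set $E$, and $E$ need not meet $G$ (nor even $Z$). You treat the analogous issue for $\psi$ correctly by uniform continuity, but for the measure there is no such comparison: one cannot pass from $\mu(B_n(y,\vep))$ to $\mu(B_n(x,\vep))$ when $y\neq x$. The standard repair is the doubling trick: for each $x\in E$ with $B_n(x,\vep)\cap G\neq\emptyset$, choose $y\in B_n(x,\vep)\cap G$ and use $B_n(x,\vep)\subset B_n(y,2\vep)$, so that $\mu(B_n(x,\vep))\le\mu(B_n(y,2\vep))$, the right side being controlled by Brin--Katok at scale $2\vep$. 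This also repairs a related imprecision: Brin--Katok bounds $-\frac1n\log\mu(B_n(y,\vep))$ from below only by an $\vep$-dependent quantity that increases to $h_\mu(f)$ as $\vep\to 0$, so for fixed $\vep$ you cannot conclude $\liminf_n\frac1n\log N^\mu(\psi,\gamma,\vep,n)\ge h_\mu(f)+\int\psi\,d\mu$ (that would contradict the monotonicity of the left side in $\vep$). The $2\vep$-dependence in the measure bound, together with the modulus of continuity of $\psi$ on $\vep$-balls, are precisely the defects that vanish in the outer limit $\vep\to 0$; once they are inserted the argument closes.
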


\begin{remark}\label{defvar}
Given $\vep>0$ and $\varphi\in C^0(X,\mathbb R^d)$, the variation in balls of radius $\vep$ is
$$
\displaystyle \var(\varphi, \vep) = \sup \{\mid \varphi(x) - \varphi(y)\mid : d(x, y)<\vep\}.
$$
Since $X$ is compact then  $\var(\varphi, \vep)\to 0$ as $\vep \to 0$. As $\varphi : X \rightarrow \mathbb R^d$ is continuous (hence uniformly continuous) and $f$ is continuous then for every $\vep >0$ there exists $\delta>0$ such that $\| \frac{1}{n}\sum_{i=0}^{n - 1}\varphi(f^i (x)) - \frac{1}{n}\sum_{i=0}^{n - 1}\varphi(f^i (y))\| < \vep$ whenever $d_n (x, y)<\delta$.
\end{remark}

\section{The set of points with non-trivial pointwise rotation set}\label{sec:homotopic1}

The main goal of this section is to prove Theorems~\ref{thm:C} and \ref{thm:D}, concerning on the set of points in $\mathbb T^2$
with non-trivial pointwise rotation set for typical homeomorphisms.

\subsection{Continuous maps with the gluing orbit property}\label{subsec:gop}

Here we prove the genericity of the gluing orbit property on chain recurrent classes 
with a dense set of periodic orbits, 
a result of independent interest inspired by \cite{BTV}. 

\begin{proposition}\label{lem0}
Let $X$ be a compact Riemannian manifold of dimension at least $2$. Assume that $ f \in \text{Homeo}^{}(X)$ 
has the periodic shadowing property.
If $\Gamma \subset CR(f)$ is a isolated chain recurrent class 
then $f\mid_\Gamma$ satisfies the periodic gluing orbit property.
\end{proposition}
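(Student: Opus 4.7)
The plan is to combine three ingredients: chain transitivity of $\Gamma$ (automatic since $\Gamma$ is a chain recurrent class), the periodic shadowing property of $f$, and the isolation hypothesis on $\Gamma$. Given $\ep>0$, I want to construct, for any finite collection $x_1,\dots,x_k\in\Gamma$ and times $n_1,\dots,n_k$, a periodic $\delta$-pseudo-orbit obtained by concatenating genuine orbit segments $x_i,f(x_i),\dots,f^{n_i-1}(x_i)$ with short ``bridging'' pseudo-orbits of length bounded by a quantity $m(\ep)$ that depends only on $\ep$; then shadow it by a periodic point and argue that this point must already lie in $\Gamma$.

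The first (and main) step is the existence of uniform bridges. Fix $\delta>0$ and, for each integer $\ell\ge 1$, let
\[
U_\ell = \{(x,y)\in \Gamma\times\Gamma : \text{there is a $\delta$-pseudo-orbit of length $\ell$ from $x$ to $y$}\}.
\]
Using uniform continuity of $f$, each $U_\ell$ is open in $\Gamma\times \Gamma$, and the sets $V_L:=\bigcup_{\ell\le L}U_\ell$ are open and increasing. Since $\Gamma$ is a single chain recurrent class, $\bigcup_L V_L=\Gamma\times\Gamma$, and compactness of $\Gamma\times\Gamma$ yields an $M=M(\delta)\in\N$ such that $V_{M}=\Gamma\times\Gamma$. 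Thus any two points of $\Gamma$ can be joined by a $\delta$-pseudo-orbit of length at most $M(\delta)$.

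Now, given $\ep>0$, let $\eta\in(0,\ep)$ be smaller than the isolation distance $\dist_H(\Gamma,CR(f)\setminus\Gamma)$, and let $\delta=\delta(\eta)>0$ be provided by the periodic shadowing property. I set $m(\ep):=M(\delta)$. For the given data $x_1,\dots,x_k$ and $n_1,\dots,n_k$, I assemble the periodic $\delta$-pseudo-orbit by following the genuine orbit $x_i,f(x_i),\dots,f^{n_i-1}(x_i)$, then inserting a $\delta$-pseudo-orbit of length $p_i\le m(\ep)$ from $f^{n_i}(x_i)$ to $x_{i+1}$ (indices mod $k$, so the last bridge of length $p_k\le m(\ep)$ returns to $x_1$ and closes the loop). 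Periodic shadowing then yields a periodic point $y$ of period $\sum_{i=1}^{k}(n_i+p_i)$ whose orbit is $\eta$-close to this pseudo-orbit, in particular the required inequalities $d(f^{j+n_1+p_1+\dots+n_{i-1}+p_{i-1}}(y),f^j(x_i))\le\eta\le\ep$ hold.

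The final step uses isolation to place $y$ inside $\Gamma$. Every iterate $f^j(y)$ is chain recurrent (as $y$ is periodic) and lies within $\eta$ of some point of $\Gamma$; since $\eta$ is smaller than the Hausdorff distance from $\Gamma$ to the rest of $CR(f)$, each $f^j(y)$ must belong to $\Gamma$. Hence $y\in\Gamma$ with the prescribed period, which is exactly the periodic gluing orbit property on $\Gamma$ in the sense of Remark~\ref{def:gluing-local}. The delicate point, and the main technical obstacle, is step one: securing a bound on bridging length that depends only on $\delta$, not on the chosen points. Without this uniformity, the concatenation would force $p_i$ to blow up with the data and the gluing orbit property would fail; the compactness-plus-openness argument described above is what rescues it.
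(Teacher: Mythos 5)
Your proposal is correct, and its overall architecture matches the paper's: concatenate the genuine orbit segments $x_i, f(x_i),\dots,f^{n_i-1}(x_i)$ with short bridging $\delta$-pseudo-orbits of length at most $m(\ep)$, close the loop to form a periodic $\delta$-pseudo-orbit, invoke periodic shadowing to produce a periodic point $y$ of period $\sum_i(n_i+p_i)$, and use the isolation of $\Gamma$ to conclude the shadow orbit lies in $\Gamma$. Where you genuinely diverge from the paper is in the step you rightly flag as the crux: obtaining a bound on bridge lengths depending only on $\ep$ (equivalently, on $\delta$). The paper fixes a \emph{maximal $\delta$-separated set of periodic points} $\mathfrak L=\{\theta_1,\dots,\theta_m\}\subset\Gamma\cap\mathrm{Per}(f)$ (available because periodic shadowing makes periodic points dense in $\Gamma$) and bridges from $x$ to $y$ by running through a chain $\theta_{i_1},\dots,\theta_{i_s}\in\mathfrak L$ along full periodic orbits, landing at the bound $m\cdot\max_i\pi(\theta_i)$; this explicit construction requires adjacent $\theta_{i_j},\theta_{i_{j+1}}$ to be $\delta$-close, which sits in tension with the $\delta$-separation of $\mathfrak L$ and implicitly needs a rescaling or connectedness argument to make rigorous. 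Your compactness route is cleaner and sidesteps this entirely: each $U_\ell\subset\Gamma\times\Gamma$ is open because the pseudo-orbit inequalities are strict and $f$ is continuous (one only perturbs the two endpoints, keeping the interior of the chain fixed), chain transitivity of $\Gamma$ gives $\bigcup_\ell U_\ell=\Gamma\times\Gamma$, and compactness extracts the uniform $M(\delta)$. Both approaches deliver the same constant $m(\ep)$, and the shadowing-plus-isolation conclusion is identical, but your derivation of the uniform bridge length is more robust: it does not use density of periodic points in $\Gamma$ at all, and it works verbatim for any compact chain transitive invariant set.
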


\begin{proof}
By the periodic shadowing property, periodic points are dense in isolated chain recurrent classes. 
Thus $\Gamma$ is a compact set with a dense set of periodic points. Given $\de>0$, let 
$\mathfrak L = \{\theta_1, \theta_2, \dots , \theta_m\} \subset \Gamma \cap Per (f)$ 
be a maximal $\de$-separated subset of $\Gamma$.
We know that $m=m(\de)=\#\mathfrak L <\infty$ by the compactness of $\Gamma$. Moreover, if $\pi(\theta_i)\ge 1$ denotes the
prime period of the periodic point $\theta_i$ then it is not hard to check that for any points 
$x, y \in \Gamma$ there exists a $\delta$-pseudo orbit $(x_i^{})_{i=1, \dots , n}$ 
so that $x_0^{} = x, x_n^{} = y$ and  $n \le m \cdot \max_{1\le i\le m} \pi(\theta_i)$.
Indeed, choose $\{\theta_{i_1}, \theta_{i_2}, \dots, \theta_{i_s}\} \subset \mathfrak L$
with $s\le m$ so that $d(x, \theta_{i_1})<\de$, $d(\theta_{i_s},y)<\de$ and $d(\theta_{i_j}, \theta_{i_{j+1}})<\de$
for every $1\le j\le s-1$ and take the $\de$-pseudo orbit 
$$
\big\{x^{}, \theta_{i_1}, f(\theta_{i_1}), \dots , f^{\pi(\theta_{i_1})-1}(\theta_{i_1}), 
	\theta_{i_2}, f(\theta_{i_2}), \dots , f^{\pi(\theta_{i_2})-1}(\theta_{i_2}), \dots
$$
$$
\dots, \theta_{i_s}, f(\theta_{i_s}), \dots , f^{\pi(\theta_{i_s})-1}(\theta_{i_s}), y \big\}
$$
connecting $x$ to $y$.

We claim that $f\mid_\Gamma$ satisfies the periodic gluing orbit property.
Take an arbitrary $0<\vep<\frac12 \dist_H(\Ga, CR(f)\setminus \Ga)$ and 
let $\delta=\delta(\vep) > 0$ be given by the periodic shadowing property.

Consider arbitrary points $x_1, x_2, \dots ,x_k \in \Gamma$ and integers $n_1^{}, \dots , n_k^{} \ge  0$. 
The previous argument ensures that, for every $1\le s \le k$ there exists a $\delta$-pseudo orbit $(y_i^{s})_{i=0, \dots , l_{s}}$ connecting the point
$f^{n_{s-1}}(x_{s-1})$ and $x_{s}$ and  a $\delta$-pseudo orbit $(y_i^{k+1})_{i=0, \dots , l_{k+1}}$ connecting the point
$f^{n_{k}}(x_{k})$ and $x_{1}$, all formed by at most $K:=m \cdot \max_{1\le i\le m} \pi(\theta_i)$ points.
Notice that $K$ depends only on $\mathfrak L$ and $\delta$. 
Hence we may consider the $\delta$-pseudo-orbit  $(x_i^{})_i^{}$ connecting $x_1^{}$ to itself 
defined by
$$
\big\{x_1^{}, f(x_1^{}), \dots , f^{n_1^{}-1}(x_1^{}), y_0^{1},  y_1^{1}, \dots , y_{l_{1}^{}-1}^{1}, x_2, f(x_2^{}), \dots , f^{n_2^{}-1}(x_2^{}),
$$
$$
y_0^{2},  y_1^{2}, \dots , y_{l_{2}-1}^{2},   \dots , x_{k}^{}, f(x_{k}^{}), \dots , f^{n_{k}^{}}(x_{k}^{}), y_0^{k+1},  y_1^{k+1}, \dots  y_{l_{k+1}}^{k+1}, x_1\big\}.
$$
Using the periodic shadowing property for $f$ there exists a periodic point $z\in X$ so that $d(f^{j}(z), f^{j}(x_1^{})) < \vep$  for every $0\le  j\le n_1^{}$ and
$$
d(f^{j + p_{1}^{} + n_1^{} +  \dots + p_{i-1}^{} + n_k^{}}(z), f^{j}(x_i^{})) < \vep , ~~\forall ~~i\in \{2, \dots , k\},~~\forall~~j\in \{0, 1, \dots , n_i^{}\}
$$
where each $p_{s}$ is bounded above by $K$. The choice of $\vep$ ensures that $z\in \Gamma$, hence  $f\mid_\Gamma$ satisfies the periodic gluing orbit property.
\end{proof}

\begin{corollary}\label{lem1}
Let $X$ be a compact Riemannian manifold of dimension at least $2$.
There exists a Baire residual subset $\mathcal R_0\subset \text{Homeo}_{0}^{}(X)$  so that if $f\in \mathcal R_0$ and $\Gamma \subset \Omega(f)$ is a isolated chain recurrent class then the restriction $f\mid_\Gamma$ satisfies the periodic gluing orbit property.
\end{corollary}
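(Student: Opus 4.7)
The plan is to exhibit $\mathcal R_0$ as the intersection of two $C^0$-residual subsets and then invoke Proposition~\ref{lem0} pointwise. More precisely, I would set
\begin{equation*}
\mathcal R_0 = \mathcal R_0^{(1)} \cap \mathcal R_0^{(2)},
\end{equation*}
where $\mathcal R_0^{(1)} \subset \text{Homeo}_{0}^{}(X)$ consists of homeomorphisms satisfying $\Omega(f)=CR(f)$ (this is $C^0$-residual by \cite[Theorem~1]{PPSS}, already invoked in the excerpt), and $\mathcal R_0^{(2)} \subset \text{Homeo}_{0}^{}(X)$ consists of homeomorphisms enjoying the periodic shadowing property in the sense of Definition~\ref{def:shadow}. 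Granted residuality of $\mathcal R_0^{(2)}$, the argument concludes at once: for $f\in \mathcal R_0$ and any isolated chain recurrent class $\Gamma \subset \Omega(f)$, the identity $\Omega(f)=CR(f)$ says that $\Gamma$ is an isolated chain recurrent class of $CR(f)$, and Proposition~\ref{lem0} applied to $f$ delivers the periodic gluing orbit property on $\Gamma$.

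The core of the work is therefore the residuality of $\mathcal R_0^{(2)}$. My approach is to combine two ingredients well-established in the $C^0$ category. The first is that the (plain) shadowing property is $C^0$-residual in $\text{Homeo}(X)$, a theorem due to Pilyugin--Plamenevskaya and Mazur. The second is that, $C^0$-generically, periodic points are dense in $\Omega(f)$, which is the $C^0$ counterpart of the classical closing lemma and follows along the same lines as \cite{PPSS}. Since $\text{Homeo}_{0}^{}(X)$ is open in $\text{Homeo}(X)$ (by Theorem~\ref{epst} and its higher-dimensional analogue), intersecting the two residuals with this open subspace preserves residuality. To upgrade ordinary shadowing to periodic shadowing, I would proceed as follows: given a $\delta$-periodic pseudo-orbit and its $\vep$-shadow $y$, I would use denseness of periodic points combined with the flexibility of $C^0$ perturbations to close the orbit of $y$, producing a genuinely periodic shadow. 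Unlike the $C^r$ case with $r\ge 1$, no smooth obstruction appears.

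The step I expect to be the real obstacle is precisely this passage from shadowing to \emph{periodic} shadowing, since $a priori$ a shadowing point of a periodic pseudo-orbit need not be periodic. The cleanest implementation is to quote an available ``periodic shadowing is $C^0$-generic'' statement from Mazur and collaborators; alternatively, one may reprove it by a direct Baire argument, writing $\mathcal R_0^{(2)} = \bigcap_{n} U_n$ with $U_n$ the open set of $f$ for which every $\frac1n$-periodic pseudo-orbit admits a periodic $\frac1n$-shadow, and verifying denseness of each $U_n$ by a local perturbation that simultaneously closes and shadows a given finite pseudo-orbit. Everything else---openness of $\text{Homeo}_{0}^{}(X)$ inside $\text{Homeo}(X)$, residuality of $\mathcal R_0^{(1)}$, and verification of the hypotheses of Proposition~\ref{lem0}---is routine topological bookkeeping.
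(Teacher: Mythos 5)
Your proposal and the paper proceed identically in structure: exhibit a $C^0$-residual subset of homeomorphisms enjoying the periodic shadowing property together with $\Omega(f)=CR(f)$ (the latter is needed so that ``isolated chain recurrent class of $\Omega(f)$'' matches the hypothesis of Proposition~\ref{lem0}, which is stated for $\Gamma\subset CR(f)$), and then invoke Proposition~\ref{lem0} pointwise. The paper obtains such a residual in one stroke by citing \cite{CMN,pipa}, which is precisely the ``cleanest implementation'' you mention as an alternative. The openness of $\text{Homeo}_0(X)$ in $\text{Homeo}(X)$, while true on a general compact manifold (via the geodesic homotopy below the injectivity radius), is not what Theorem~\ref{epst} asserts; that theorem is a two-dimensional statement and does not directly give the higher-dimensional claim you assign to it.

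Where your write-up actually goes wrong is the proposed ``upgrade'' of ordinary shadowing to periodic shadowing for a fixed generic $f$. Given a periodic $\delta$-pseudo-orbit and its $\vep$-shadow $y$, you suggest ``using denseness of periodic points combined with the flexibility of $C^0$ perturbations to close the orbit of $y$''. This cannot work as stated: periodic shadowing requires a periodic orbit of the \emph{unperturbed} $f$, and perturbing $f$ changes the dynamics, so a periodic shadow of the perturbed map says nothing about the original one. Nor does (plain) shadowing together with density of periodic points yield, for an arbitrary $C^0$-generic $f$, a periodic point close to a shadow of a periodic pseudo-orbit and with the right period; that inference holds under expansiveness, but not in general in the $C^0$ category. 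The correct routes are exactly the two you fall back on: quote the genericity of periodic shadowing (as the paper does with \cite{CMN,pipa}), or run the Baire argument at the level of $\text{Homeo}(X)$ — perturbing the map, not a single orbit — to verify denseness of each $U_n$, with some care in formulating $U_n$ so that it is genuinely open. With either of those fixes, your argument coincides with the paper's.
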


\begin{proof}
It follows from \cite{CMN,pipa} that there exists a residual subset  $\widetilde{\mathcal R_0}\subset\text{Homeo}(X)$ such that every $f\in \widetilde{\mathcal R_0}$ has the periodic shadowing property, 
and $\overline{\text{Per}(f)} = \Omega (f)=CR(f)$.
The result is now a direct consequence of Proposition~\ref{lem0}.
\end{proof}

\subsection{Volume preserving homeomorphisms}\label{subsec:conservative}

Our starting point for the proof of Theorem~\ref{thm:C} is that
specification is generic among volume preserving homeomorphisms.
More precisely, for any compact Riemannian manifold $M$ of dimension at least $2$, there exists a residual subset $\mathcal R_2 \subset \text{Homeo}_\lambda(M)$ such that every homeomorphism in $\mathcal R_2$ satisfies the specification property \cite{GL}. Together with the fact that  $\text{Homeo}_{0, \lambda}^{}(\mathbb T^d)$ is open in $\text{Homeo}_{\lambda}^{}(\mathbb T^d)$ this ensures:

\begin{corollary} \label{cor:Hgluing} There is a residual $\mathcal R_3 \subset \text{Homeo}_{0,\lambda}^{}(\mathbb T^d)$ such that every $f\in \mathcal R_3$ satisfies the specification property (hence the gluing orbit property).
\end{corollary}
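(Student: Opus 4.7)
The plan is to deduce this corollary by combining two facts that have already been recorded in the excerpt, namely the Guihéneuf–Lefeuvre type result that specification is $C^0$-generic in $\text{Homeo}_\lambda(M)$, and the openness of the subset of homeomorphisms homotopic to the identity discussed in Subsection~\ref{shhi}.

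First I would invoke the result cited from \cite{GL}: there exists a Baire residual subset $\mathcal R_2 \subset \text{Homeo}_\lambda(\mathbb T^d)$ such that every $f\in \mathcal R_2$ satisfies the specification property. Next, by Theorem~\ref{epst} and the discussion following it in Subsection~\ref{shhi}, the set $\text{Homeo}_{0,\lambda}(\mathbb T^d) = \text{Homeo}_0(\mathbb T^d) \cap \text{Homeo}_\lambda(\mathbb T^d)$ is $C^0$-open in $\text{Homeo}_\lambda(\mathbb T^d)$; in particular it is a Baire space with the induced topology.

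I would then set $\mathcal R_3 := \mathcal R_2 \cap \text{Homeo}_{0,\lambda}(\mathbb T^d)$. Since $\mathcal R_2$ contains a countable intersection $\bigcap_{n\ge 1} U_n$ of open dense subsets of $\text{Homeo}_\lambda(\mathbb T^d)$, each intersection $U_n \cap \text{Homeo}_{0,\lambda}(\mathbb T^d)$ is open in $\text{Homeo}_{0,\lambda}(\mathbb T^d)$, and openness of $\text{Homeo}_{0,\lambda}(\mathbb T^d)$ together with denseness of $U_n$ guarantees that $U_n \cap \text{Homeo}_{0,\lambda}(\mathbb T^d)$ is dense in $\text{Homeo}_{0,\lambda}(\mathbb T^d)$. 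Hence $\mathcal R_3$ is Baire residual in $\text{Homeo}_{0,\lambda}(\mathbb T^d)$ and every element satisfies the specification property. Finally, as noted in Remark~\ref{def:gluing-local}, the specification property implies the gluing orbit property, which yields the parenthetical conclusion.

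There is essentially no obstacle here: the statement is a topological packaging of the cited genericity result together with the openness of the homotopy class of the identity. The only point that requires minimal care is verifying that relative genericity is preserved under intersection with an open subset, which is standard.
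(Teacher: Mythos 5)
Your argument is exactly the paper's: cite \cite{GL} for genericity of specification in $\text{Homeo}_\lambda(M)$, intersect the residual set with the open subset $\text{Homeo}_{0,\lambda}(\mathbb T^d)$, and recall that specification implies the gluing orbit property. One small caveat worth noting is that Theorem~\ref{epst} covers only the two-dimensional case, so for $d>2$ the openness of $\text{Homeo}_{0,\lambda}(\mathbb T^d)$ in $\text{Homeo}_\lambda(\mathbb T^d)$ should instead be justified by the general fact that the induced automorphism on $H_1(\mathbb T^d;\mathbb Z)$ is locally constant in the $C^0$-topology, which is the fact the paper implicitly invokes in the sentence preceding the corollary.
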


Given $f\in \text{Homeo}_{0}^{}(\mathbb T^2)$ recall that $\rho(f)$ is called \emph{stable} if there exists $\delta>0$ so that $\rho(g) = \rho(f)$ for every $g\in \text{Homeo}_{0}^{}(\mathbb T^2)$ so that $d_{C^0}^{} (f, g) < \delta$.

\begin{theorem}\cite[Theorem 1]{GK} \label{lem2}  The set of all homeomorphisms with a stable rotation set  is open and dense
set $\mathcal O \subset \text{Homeo}_{0}^{}(\mathbb T^2)$.
Moreover, the rotation set of every such homeomorphism is a convex polygon with rational vertices, and in the area-preserving setting this polygon has nonempty interior.
\end{theorem}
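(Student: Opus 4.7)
The openness of $\mathcal O$ is immediate from the definition: if $f\in \mathcal O$ with stability neighborhood $U$, then every $g\in U$ satisfies $\rho(G)=\rho(F)$ and the same $U$ witnesses the stability of $\rho(G)$, so $U\subset\mathcal O$.

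\textbf{Density.} Given $f\in \text{Homeo}_{0}^{}(\mathbb T^2)$ and $\vep>0$, my plan is to $C^0$-approximate $f$ by some $g$ whose rotation set is a rational convex polygon $P$ realized as the convex hull of rotation vectors of finitely many \emph{topologically persistent} periodic orbits. First, using the convexity of $\rho(F)\subset\mathbb R^2$ and the denseness of rationals, choose rational vectors $v_1,\dots,v_k$ in the interior of $\rho(F)$ whose convex hull $P$ is Hausdorff-$\vep$-close to $\rho(F)$. By Franks' theorem \cite{1989F}, each $v_i$ is realized by a periodic point of $f$; a further arbitrarily small local perturbation equips each such orbit with a topological saddle structure persistent under $C^0$-small perturbations (so periodic points with the same rotation vector survive nearby). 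This already yields $\rho(G)\supset P$ in a $C^0$-neighborhood. The delicate step is to arrange $\rho(G)\subset P$ simultaneously, which requires a trimming perturbation that eliminates rotation vectors outside $P$ by reorganizing the dynamics in the complement of the invariant objects that carry the $v_i$'s. Once both inclusions persist, $P$ is the stable rotation set of the resulting homeomorphism.

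\textbf{Structure of stable rotation sets and conservative case.} A stable rotation set must be a rational polygon: if it had an irrational extreme point or a curved boundary piece, Franks-type realization combined with the upper semi-continuity of $\rho$ would allow an arbitrarily small $C^0$-perturbation to shift this boundary, contradicting stability; thus all extreme points are rational and the boundary is polygonal. In the conservative setting the upper-bound perturbation above cannot rely on attractors/repellers, but the topological-saddle construction at each $v_i$ can be implemented inside $\text{Homeo}_{0,\lambda}^{}(\mathbb T^2)$ via Oxtoby-Ulam approximation, and area preservation forces the persistent periodic orbits to be of saddle type with homoclinic/heteroclinic connections generating the required rotation vectors. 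The nonempty-interior statement follows by showing that the alternative (pseudo-rotation-like behavior) is non-generic in the conservative class: using the density of periodic measures for conservative homeomorphisms with the specification property (Corollary~\ref{cor:Hgluing}) one can always $C^0$-perturb $f$ to place three non-collinear rational vectors into the interior of $\rho(F)$, and this property is preserved by small further perturbations.

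\textbf{Main obstacle.} The technical crux is constructing the trimming perturbation that enforces $\rho(G)\subset P$ in a full $C^0$-neighborhood: one must surgically control the dynamics outside the prescribed persistent periodic orbits so that no recurrent orbit develops a rotation vector outside $P$, and this control must be compatible with the saddle structure at each vertex and (in the conservative case) with area preservation. In the dissipative setting this is arranged by inserting attractors/repellers that absorb the complementary dynamics, but in the conservative setting such devices are forbidden, so the trimming relies on volume-preserving local modifications and a careful use of Franks' theorem at the extreme rotation vectors. Matching these constructions so that both $\rho(G)\supset P$ and $\rho(G)\subset P$ persist jointly under all small $C^0$-perturbations is the heart of the argument.
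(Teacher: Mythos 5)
This theorem is not proved in the paper under review; it is imported verbatim from Guih\'eneuf--Koropecki \cite{GK} (with the density and rational-polygon parts going back to Passeggi \cite{P14}). So there is no ``paper's own proof'' to match against; what follows is an assessment of your sketch on its own terms.

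Your openness argument is fine, and the general strategy (realize a rational polygon $P$ close to $\rho(F)$ by persistent periodic orbits, then trim the rotation set down to $P$) is indeed the shape of the argument. But the sketch has serious gaps in exactly the places you flag as ``delicate.'' First, the lower bound $\rho(G)\supset P$: in the $C^0$ category there is no hyperbolicity, so ``equipping the orbit with a topological saddle structure persistent under $C^0$-small perturbations'' is not a construction. What actually persists under $C^0$-perturbation is a nonzero fixed-point (Lefschetz/Nielsen) index, and the known proofs arrange the realizing periodic orbits so that their index is nonzero and bounded away from zero, forcing nearby homeomorphisms to have periodic orbits of the same period in a prescribed homotopy class, hence with the same rational rotation vector. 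That index bookkeeping is essential and is missing from your sketch. Second, and more seriously, the upper bound $\rho(G)\subset P$ for all $g$ in a whole $C^0$-neighborhood is the heart of the theorem, and you explicitly defer it: ``a trimming perturbation that eliminates rotation vectors outside $P$'' is a description of the goal, not an argument. The actual proofs bound deviations of orbits from their asymptotic rotation vectors and exploit the semicontinuity of $\rho$ together with a filtration-type structure (Passeggi's brick/foliation machinery in the dissipative case; Guih\'eneuf--Koropecki's bounded-deviation and prime-end techniques in the conservative case, where attractors and repellers are unavailable). None of that is supplied here.

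Your argument that a stable rotation set must be a rational polygon is also not a proof. Appealing to ``upper semi-continuity of $\rho$ plus Franks-type realization'' to ``shift a curved or irrational boundary piece'' ignores the direction in which semicontinuity works: upper semicontinuity only prevents the rotation set from suddenly growing, it does not produce a perturbation that moves the boundary. The correct argument shows that any stable $\rho$ is the closed convex hull of the rotation vectors of index-persistent periodic orbits, and those vectors are rational; this needs the same index and realization machinery as above. Finally, invoking Corollary~\ref{cor:Hgluing} (specification for $C^0$-generic conservative homeomorphisms) to conclude nonempty interior of $\rho$ on an \emph{open dense} set does not work as stated: specification is a residual, not an open, property, and the nonempty-interior statement in the conservative case is proved in \cite{GK} by a direct area-preservation argument (roughly, an invariant measure of full support together with area preservation rules out the degenerate cases of a point or a segment), not via specification. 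In short: the outline is consistent with the known route, but the two load-bearing steps — index-based persistence for the lower bound, and the bounded-deviation/trimming argument for the upper bound — are named but not carried out.
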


We are now in a position to prove Theorem \ref{thm:C}.

\begin{proof}[Proof of Theorem \ref{thm:C}]
Let $F: \mathbb R^2\to\mathbb R^2$ be a lift of $f$ and consider the observable (displacement function) $\varphi_F : \mathbb T^2\to\mathbb R^2$ given by   $\varphi_F (x) = F(\widetilde{x}) - \widetilde{x}$, where $\pi(\widetilde{x})=x$. Since
$$
\frac1n\sum_{j=0}^{n-1} \varphi_F( f^j(x)) = \frac{F^n (\widetilde{x}) - \widetilde{x}}{n}
$$
then $\rho(F,x)$ coincides with the accumulation points of $(\frac1n\sum_{j=0}^{n-1} \varphi_F ( f^j(x)))_{n\ge 1}$.
Hence, the set of points with non-trivial pointwise rotation set of $x$ can be defined by
$
\mathbb F_{f}^{} := \{x \in \mathbb T^2 :  \rho(F, \widetilde{x}) ~\text{is~not~trivial, where }  \pi(\widetilde{x})=x\}.
$
Take the residual subset
$
\mathfrak{R}_1 := \mathcal{R}_3\cap \mathcal{O} \subset \text{Homeo}_{0,\lambda}(\mathbb T^2).
$
We claim that $\mathbb F_{f}$ is residual in $\mathbb T^2$ for every $f\in \mathfrak R_1$. Indeed, any $f\in \mathfrak R_1$ satisfies the gluing orbit property and, if $F$ is  a lift of $f$,
$\text{int} \;\rho(F)\neq\emptyset$. The latter ensures that $\varphi_{F}^{} \not\in \overline{Cob}$ (recall Lemma \ref{obsat}) and $X_{\varphi_{F}^{}, f}\neq\emptyset$. Theorem \ref{thm:C} is now a consequence of Theorems \ref{thm:A} and \ref{thm:B}.
\end{proof}

\subsection{Dissipative homeomorphisms}\label{subsec:dissipative}

In order to prove Theorem~\ref{thm:D} consider the set
$\mathcal A := \{f \in \text{Homeo}_{0}^{}(\mathbb T^2) : \text{int}\; \rho(F)\neq\emptyset\}$, which does not depend on the lift $F$. Misiurewicz and Ziemian proved that $ \mathcal A$ is open in $\text{Homeo}_{0}^{}(\mathbb T^2)$ \cite[Theorem~B]{MZ}. The following useful results are due the Libre and Mackay \cite{LM}.

\begin{theorem} \cite[Theorem~1]{LM} \label{proprs4}
If $f\in \text{Homeo}_{0}(\mathbb T^2)$ and $F$ is a lift of $f$ then the following hold:
(i) if $\rho(F)$ has nonempty interior then $f$ has positive topological entropy; and ~(ii)  if $\Delta\subset \rho(F)$ is a polygon whose vertex are given by the rotation vectors of
(finitely many) periodic points of $f$ then for any compact connected $D\subset \Delta$ there exists point $x\in \mathbb T^2$ and $\tilde{x}\in \pi^{-1}(x)$ so that $\rho(F, \tilde{x})=D$.
\end{theorem}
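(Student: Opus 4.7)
The plan is to prove parts (i) and (ii) separately, since they require different ingredients. The common thread is Franks' theorem, which asserts that every rational vector $v$ in the interior of $\rho(F)$ is realized by a periodic point of $f$ whose lifted rotation vector is exactly $v$. I will use this to extract finitely many periodic orbits whose rotation vectors generate enough topological complexity to imply both positive entropy and the realizability of arbitrary compact connected subsets of $\Delta$.

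For part (i), since $\text{int}\,\rho(F)\neq\emptyset$, I would pick three rational vectors $v_1,v_2,v_3\in \text{int}\,\rho(F)$ that are affinely independent (a non-degenerate rational triangle inside the interior), and invoke Franks' theorem to obtain periodic points $p_1,p_2,p_3$ of $f$ with these rotation vectors. Because the three rotation vectors are distinct and non-collinear, the braids traced out in $\mathbb T^2\times[0,1]$ by the three periodic orbits are pairwise non-homotopic and, more importantly, force the relative isotopy class of $f$ on the punctured torus $\mathbb T^2\setminus\{p_1,p_2,p_3\}$ to contain a pseudo-Anosov component in its Nielsen–Thurston decomposition. Since every map in the relative class has at least the topological entropy of the Thurston representative, and pseudo-Anosov components have positive entropy, this gives $\htop(f)>0$.

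For part (ii), I would work with the periodic orbits $\gamma_1,\dots,\gamma_k$ realizing the vertices $w_1,\dots,w_k$ of $\Delta$ (again via Franks), and construct the desired point by a concatenation-and-shadowing argument. Given a compact connected $D\subset\Delta$, choose a sequence $(u_n)_{n\ge 1}$ that is dense in $D$ and such that consecutive terms $u_n,u_{n+1}$ are close (possible by connectedness of $D$, e.g.\ by densely sampling along a chain of overlapping small subcontinua). Writing each $u_n=\sum_i \la_i^{(n)}w_i$ as a convex combination, I would build a pseudo-orbit in stages: in stage $n$ of length $N_n$, spend a fraction $\la_i^{(n)}$ of the time shadowing the periodic orbit $\gamma_i$, jumping between them using the pseudo-Anosov structure from part (i). Choosing $N_n\to\infty$ sufficiently fast guarantees that the running Birkhoff average of the displacement observable $\varphi_F$ reaches a value within $1/n$ of $u_n$ by the end of stage $n$, while the past contributions become negligible. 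A shadowing argument (available because the pseudo-Anosov piece produces a horseshoe in which pseudo-orbits are shadowed by true orbits) upgrades this pseudo-orbit to a genuine orbit of a point $\tilde{x}\in\mathbb R^2$ whose rotation averages remain $\vep$-close.

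The main difficulty is the \emph{equality} $\rho(F,\tilde x)=D$, rather than mere inclusion. The ``$\supseteq$'' direction requires that the averages actually hit every point of $D$, forcing careful control of the length of the stages so the target $u_n$ is attained with error $o(1)$ in $n$. The ``$\subseteq$'' direction requires that during transitions between $u_n$ and $u_{n+1}$ the averages do not stray outside $D$; here the connectedness of $D$ and the closeness of consecutive $u_n$ are essential, since any value the averages pass through during the transition is close to some point of $D$, and by taking $u_n$ and $u_{n+1}$ close in $D$ (along a continuous path inside $D$) one ensures the intermediate averages stay $o(1)$-close to $D$ as well. Once these two quantitative estimates are in place — the fast approach to $u_n$ and the controlled interpolation along a path in $D$ — the pointwise rotation set of $\tilde x$ coincides exactly with the closure of $\{u_n\}_n$, which is $D$.
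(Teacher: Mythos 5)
Note first that the paper does not prove this statement: it is quoted, as Theorem~\ref{proprs4}, directly from Llibre and Mackay \cite{LM}, so there is no in-paper proof to compare against. Your outline follows the broad strategy of the original argument (Thurston--Nielsen theory for (i), concatenation and shadowing through the chaotic piece for (ii)), but it has two genuine gaps.

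For part (i), the central step is the claim that three periodic orbits with affinely independent rotation vectors force a pseudo-Anosov component in the Thurston--Nielsen decomposition of $f$ relative to those orbits. You assert this but do not argue it, and it is precisely the content of the result: one must rule out the finite-order case and the reducible-without-pseudo-Anosov case by showing that in each of them the rotation vectors of the marked periodic orbits are forced to be collinear (for finite order by examining the induced translation on $\mathbb{R}^2$, for a reducible map by analysing what Dehn twists along a reducing system can contribute). ``Pairwise non-homotopic braids'' is not a sufficient hypothesis: a power of a Dehn twist already gives pairwise non-homotopic braids whose rotation vectors all lie on one line.

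For part (ii), the single-consecutive-block scheme you describe within each stage breaks the inclusion $\rho(F,\tilde x)\subseteq D$. If in stage $n$ you shadow $\gamma_1$ for $\lambda_1^{(n)}N_n$ \emph{consecutive} iterates, and you have chosen $N_n$ so large that the pre-stage contribution $T_{n-1}$ is negligible, then the running average at time $T_{n-1}+\lambda_1^{(n)}N_n$ is $o(1)$-close to the vertex $w_1$ (the stage-$n$ block dominates and averages to $\approx w_1$), not to $u_n$. Hence $w_1\in\rho(F,\tilde x)$ (and likewise the other vertices), even when $w_i\notin D$; the pointwise rotation set you build is the full polygon $\Delta$, not $D$. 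Your discussion of oscillation only controls the inter-stage transitions, not this intra-stage swing. The repair, which needs to be stated and quantified, is to interleave: within stage $n$ repeat many times a short pattern of bounded length $M$ whose proportion of time near $\gamma_i$ is $\approx\lambda_i^{(n)}$, so that the running average stays within $O(1/M)$ of $u_n$ throughout the stage, and this, combined with the closeness of consecutive $u_n$'s along a path in $D$, keeps the running averages in a shrinking neighbourhood of $D$.
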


This result is enough to prove the following:

\begin{lemma}\label{lem6565}
Take $f \in \cA$ and let $F$ be a lift of $f$. There exists a chain recurrent class  $\Gamma \subset \Omega (f)$ such that 
$\rho(F\mid_{\pi^{-1}(\Gamma)})$ has non-empty interior. In particular $h_{top}(f\mid_\Gamma)>0$.
\end{lemma}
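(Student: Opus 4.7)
The plan is to produce a single point $x\in \mathbb T^2$ whose pointwise rotation set is a two-dimensional triangle, and then push this rotation behavior onto the chain recurrent class that captures the long-term dynamics of $x$.

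Since $\text{int}\,\rho(F)\neq\emptyset$, Franks' theorem \cite{1989F} provides, for every rational vector in $\text{int}\,\rho(F)$, a periodic point of $f$ realizing it. I would pick three such rational vectors $v_1,v_2,v_3\in \text{int}\,\rho(F)$ in general position and set $\Delta=\text{Conv}\{v_1,v_2,v_3\}$, a non-degenerate closed triangle inside $\text{int}\,\rho(F)$. Theorem~\ref{proprs4}(ii), applied with $D=\Delta$ (a compact connected subset of a polygon whose vertices are periodic rotation vectors), yields $\tilde x\in\mathbb R^2$ with $\rho(F,\tilde x)=\Delta$. Writing $x=\pi(\tilde x)$, the $\omega$-limit set $\omega(x)$ is closed, $f$-invariant and chain transitive, hence contained in a unique chain recurrent class $\Gamma\subset \Omega(f)$. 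This $\Gamma$ is the candidate class.

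The key step is to transfer the realized rotation vectors of $\tilde x$ onto $f$-invariant measures supported in $\Gamma$. Given $v\in \Delta=\rho(F,\tilde x)$, I would extract a subsequence $n_k\to\infty$ with $(F^{n_k}(\tilde x)-\tilde x)/n_k\to v$ and take a weak-$*$ accumulation point $\mu$ of the empirical measures $\mu_{n_k}=\frac{1}{n_k}\sum_{j=0}^{n_k-1}\delta_{f^j(x)}$. Standard arguments give that $\mu$ is $f$-invariant, continuity of $\varphi_F$ yields $\int \varphi_F\,d\mu=v$, and the fact that $\dist(f^j(x),\omega(x))\to 0$ as $j\to\infty$ forces $\supp(\mu)\subset \omega(x)\subset\Gamma$. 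Hence $\Delta$ is contained in the set of rotation vectors of invariant measures supported on $\Gamma$. Combining this with the ergodic decomposition of $\mu$ and Birkhoff's theorem along generic points of $\Gamma$, one concludes that $\Delta\subset \rho(F\mid_{\pi^{-1}(\Gamma)})$; in particular, this rotation set has non-empty interior.

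Finally, the positive entropy assertion $h_{top}(f\mid_\Gamma)>0$ is obtained by invoking the entropy mechanism from Theorem~\ref{proprs4}(i): the presence of invariant measures on $\Gamma$ with rotation vectors spanning a two-dimensional set forces $f\mid_\Gamma$ to exhibit a topological horseshoe, via the Nielsen--Thurston type obstruction underlying \cite{LM}. The main obstacle I anticipate is making this last step rigorous, namely justifying that the horseshoe-producing argument of \cite{LM}, originally phrased for homeomorphisms of the whole torus, localizes to the invariant set $\Gamma$; a related subtlety is ensuring that the full triangle $\Delta$ (not merely the set of invariant-measure rotation vectors on $\Gamma$) lies in $\rho(F\mid_{\pi^{-1}(\Gamma)})$, which requires handling the non-ergodic measures produced above, either by a convexity argument for the restricted rotation set or by explicit approximation by ergodic measures whose generic points sit in $\pi^{-1}(\Gamma)$.
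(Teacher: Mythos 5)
Your first step — invoking Franks together with Theorem~\ref{proprs4}(ii) to manufacture a single point $\tilde x$ whose pointwise rotation set is a two-dimensional compact connected set inside $\operatorname{int}\rho(F)$ — is exactly how the paper opens the proof (there the target set is a small disk $D$ rather than a triangle, which changes nothing). Where you diverge is in how to locate the chain recurrent class: you pass to $\omega(x)$, extract invariant measures supported there, and try to reconstitute the triangle from their rotation vectors. The paper simply takes $\Gamma$ to be the chain recurrent class containing $x$ itself and reads off $D=\rho(F,\tilde x)\subseteq\rho(F\mid_{\pi^{-1}(\Gamma)})$ directly from the definition of the pointwise rotation set, with no measures at all. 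The step you are implicitly avoiding — that $x$ is chain recurrent — is in fact delivered for free by the Llibre--Mackay realization theorem, since the point is produced by shadowing inside a topological horseshoe (a compact transitive invariant set), which lies inside a single chain recurrent class.

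The gap you flag at the end is genuine and you are right to worry about it, but it is also avoidable. The inclusion you actually establish is $\Delta\subseteq\rho_{\mathrm{inv}}(F\mid_\Gamma)$, while the lemma needs $\operatorname{int}\rho(F\mid_{\pi^{-1}(\Gamma)})\neq\emptyset$; the chain of inclusions in \eqref{eq:rel-rot} runs the wrong way here, since $\rho(F\mid_{\pi^{-1}(\Gamma)})\subseteq\rho_{\mathrm{inv}}(F\mid_\Gamma)$ and not the reverse, and the convexity identity $\rho(F)=\rho_{\mathrm{inv}}(F)$ is a global statement about the torus that does not automatically localize to a chain recurrent class. Knowing that $\rho_{\mathrm{erg}}(F\mid_\Gamma)$ contains three affinely independent vectors is also not enough: a compact connected set can contain three such points and still have empty interior. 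So this detour cannot be closed by a soft argument; the clean way out is precisely the observation that $x$ already sits in $\Gamma$.

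Your caution about localizing the entropy mechanism is well placed and is essentially the same point again: the correct reading of the last line of the paper's proof is that the horseshoe furnished by Llibre--Mackay lies inside $\Gamma$ and contributes $h_{top}(f\mid_\Gamma)>0$, rather than a literal application of Theorem~\ref{proprs4}(i) to the restricted system. Once you accept $x\in\Gamma$ via the horseshoe, both the rotation set inclusion and the entropy bound follow at once, and the invariant-measure machinery becomes unnecessary.
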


\begin{proof}
By Franks \cite{1989F} all rational points in the interior of $\rho(F)$ are realizable by periodic point of $f$.
In particular, given a small disk $ D\subset \text{int} \;\rho(F)\neq\emptyset$, there is $x\in \mathbb T^2$ and $\tilde x\in \pi^{-1}(x)$ such that $\rho(F, \tilde{x})=D$ (by item (ii) in Theorem \ref{proprs4}). In consequence $D \subset \rho(F\mid_\Gamma)$ where $\Gamma$ denotes the  chain recurrent class of $f$ containing the point $x$. 
The previous argument shows that there is a chain recurrent class  $\Gamma \subset \Omega (f)$ such that $\rho(F\mid_\Gamma)$ has non-empty interior.  Now Theorem \ref{proprs4} item (i) implies the conclusion of the lemma.
\end{proof}

\begin{proof} [Proof of Theorem~\ref{thm:D}]
Let $\mathcal R_0$ be given by Corollary~\ref{lem1} and take the residual subset $\mathfrak{R}_2 = \mathcal R_0 \cap \mathcal A$.
The first statement in the theorem corresponds to Lemma~\ref{lem6565}.

Now, assume that $ \Gamma$ is a isolated chain recurrent class such that $\rho(F\mid_{\pi^{-1}(\Gamma)})$ has non-empty interior.
Corollary~\ref{lem1} ensures that $f\mid_\Gamma$ satisfies the periodic gluing orbit property.
Then, since $\text{int}\, \rho(F\mid_{ \Gamma}) \neq \emptyset$, the 
displacement function $\varphi_{F}=F-Id$ is not accumulated by functions cohomologous to a vector on $\pi^{-1}(\Gamma)$.
Theorems  \ref{thm:A} and ~\ref{thm:B} imply that $X_{\varphi_{F}^{}, f} \cap { \Gamma}$ is Baire residual and
has full topological entropy and full metric mean dimension in the chain recurrence class $ \Gamma$, proving the theorem.
\end{proof}

\section{Rotation sets on $\mathbb T^d$ are generically convex}
\label{sec:homotopic2}

The main goal of this section is to prove Theorem~\ref{thm:E}.
If $p\in \mathbb T^d$ is a periodic point of prime period $k\ge 1$ (with respect to $f$) we denote by $\mu_p:=\frac1k \sum_{j=0}^{k-1} \delta_{f^j(p)} $ the \emph{periodic measure} associated to $p$.
We will use the following:

\begin{lemma}\label{lemma:per-measures}
Assume that $\Lambda\subset \mathbb T^d$ is a compact $f$-invariant set. If $f\mid_\Lambda$ satisfies the periodic gluing orbit property then periodic measures are dense in $\mathcal M_1(f\mid_\Lambda)$ (in the weak$^*$ topology).
\end{lemma}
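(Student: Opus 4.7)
The plan is to approximate an arbitrary $\mu \in \mathcal M_1(f\mid_\Lambda)$ in the weak$^*$ topology by the periodic measure $\mu_z$ associated to a periodic point $z \in \Lambda$ obtained by concatenating long Birkhoff-generic orbit segments for the ergodic components of $\mu$. Since the weak$^*$ topology on probabilities over the compact set $\Lambda$ is metrizable, it suffices to show that for any finite family $\phi_1,\dots,\phi_\ell \in C^0(\Lambda,\mathbb R)$ and any $\eta > 0$ there exists a periodic point $z \in \Lambda$ with $\bigl|\int \phi_j\, d\mu_z - \int \phi_j\, d\mu\bigr| < \eta$ for every $j$.

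First I would reduce to the case $\mu = \sum_{i=1}^k \alpha_i \mu_i$ with $\mu_i$ ergodic. Since $\mathcal M_1(f\mid_\Lambda)$ is a Choquet simplex whose extreme points are the ergodic measures, the ergodic decomposition theorem implies that finite convex combinations of ergodic measures are weak$^*$ dense in $\mathcal M_1(f\mid_\Lambda)$, so it is enough to approximate such a $\mu$. Next, fix $\vep > 0$ small enough that $\var(\phi_j,\vep) < \eta$ for all $j$, and let $m = m(\vep)$ be the constant from the periodic gluing orbit property. By Birkhoff's ergodic theorem applied to each pair $(\mu_i,\phi_j)$, I can choose points $x_i \in \Lambda$ and a threshold $N_0$ such that, for every $N \ge N_0$ and $n_i := \lfloor N \alpha_i \rfloor$,
\begin{equation*}
\Bigl|\tfrac{1}{n_i}\sum_{t=0}^{n_i-1} \phi_j(f^t(x_i)) - \int \phi_j\, d\mu_i\Bigr| < \eta, \qquad 1 \le i \le k,\; 1 \le j \le \ell.
\end{equation*}
Apply the periodic gluing orbit property for $f\mid_\Lambda$ to the data $(x_i,n_i)_{i=1}^k$ to obtain a periodic point $z \in \Lambda$ of period $T = \sum_{i=1}^k (n_i+p_i)$ with each $p_i \le m$, such that $d(f^{T_i+t}(z), f^t(x_i)) < \vep$ for $0 \le t \le n_i-1$ and $T_i := \sum_{s<i}(n_s+p_s)$.

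Finally I split $\int \phi_j\, d\mu_z = \tfrac{1}{T}\sum_{t=0}^{T-1}\phi_j(f^t(z))$ into the $k$ shadowed blocks of lengths $n_i$ and the $k$ gluing gaps, of total length at most $k m$. The shadowing and the choice of $\vep$ force each shadowed block to differ from $\sum_{t=0}^{n_i-1}\phi_j(f^t(x_i))$ by at most $n_i\eta$, which by the Birkhoff estimate is within $n_i\eta$ of $n_i \int \phi_j\, d\mu_i$; the gaps contribute at most $k m \|\phi_j\|_\infty$. Dividing by $T$ and using $n_i/T = \alpha_i + O(k m/N)$ yields $\bigl|\int \phi_j\, d\mu_z - \int \phi_j\, d\mu\bigr| \le 3\eta$ once $N$ is large enough, completing the approximation.

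The main subtlety is that the gluing times $p_i$ must be \emph{uniformly bounded} by $m(\vep)$, independently of the segment lengths $n_i$; this uniform control, intrinsic to the (periodic) gluing orbit property, is what forces the empirical weights $n_i/T$ to converge to the prescribed $\alpha_i$ as $N \to \infty$, and is the feature that would fail if one only had, say, a specification-like property with gaps growing in $n_i$. The periodicity of $z$ is of course essential as well, which is why the \emph{periodic} gluing orbit hypothesis is invoked rather than the plain gluing orbit property.
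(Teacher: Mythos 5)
Your argument is correct and follows essentially the same route as the paper's proof: both reduce via the ergodic decomposition to a finite convex combination $\sum\alpha_i\mu_i$, pick Birkhoff-generic points for the $\mu_i$, glue the segments with the periodic gluing orbit property, and exploit the uniform bound $p_i\le m(\vep)$ (against segment lengths $n_i\to\infty$) to make the gluing gaps negligible. The only cosmetic difference is that the paper works with the explicit metric $d_*(\nu,\mu)=\sum 2^{-n}\bigl|\int\psi_n\,d\nu-\int\psi_n\,d\mu\bigr|$ and tunes the $n_i$ so that $n_i/\sum n_j\approx\alpha_i$, whereas you work with a finite family of test observables and set $n_i=\lfloor N\alpha_i\rfloor$.
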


\begin{proof}
The proof is a simple modification of the arguments in \cite{Sig1} (where it is considered the case where $f$ satisfies the specification property). We will include a brief sketch for completeness.

Let $(\psi_n)_{n\ge1}$ be countable and dense in $C^0(\Lambda,\mathbb R)$ and consider the metric $d_*$ on $\mathcal M(\Lambda)$ given by $d_*(\nu,\mu)= \sum_{n\ge 1} \frac1{2^n} \big| \int \psi_n d\nu - \int \psi_n d\mu \big|$. This metric is compatible with the weak$^*$ topology in $\mathcal M(\Lambda)$.
The compactness of $\mathcal M_1(f\mid_\Lambda)$ and the ergodic decomposition theorem, ensures that for any $\eta \in \mathcal M_1(f\mid_\Lambda)$ and $\zeta>0$ there exists a probability vector $(\alpha_i)_{1 \leq i \leq k}$ and ergodic measures $(\eta_i)_{1 \leq i \leq k}$ so that $d_*(\eta, \hat \eta)<\zeta/2$, where
$\hat \eta= \sum_{i=1}^k \alpha_i \eta_i.$ It is enough to construct a periodic point $p \in \Lambda$ such that $d_*(\mu_p,\hat \eta) <\zeta/2$. By definition of weak$^*$ topology, one can choose $\vep>0$ so that if $d_n(x,y)<\vep$ then $d_*(\frac1n \sum_{j=0}^{n-1} \delta_{f^n(x)},\frac1n \sum_{j=0}^{n-1} \delta_{f^n(y)}) <\zeta/10$. Let $m(\vep)>0$ be given by the gluing orbit property. Choose $N\ge 1$ large and for any $1\le i \le k$:
\begin{itemize}
\item pick $x_i \in \Lambda$ so that
$d_*(\frac1n \sum_{j=0}^{n-1} \delta_{f^n(x_i)}, \eta_i) <\zeta/10$
for every $n\ge N$,
\item let $n_i \ge N$ be so that
$
\big|\frac{n_i}{\sum_{j=1}^k  n_j} -\alpha_i \big| < \frac\zeta{10 k}
$
and
$
\frac{(k+1) m(\vep)}{\sum_{j=1}^k  n_j}
	\le   \frac{\zeta}{10}.
$
\end{itemize}
By the periodic gluing orbit property there are  positive integers $0\le m_i\le m(\vep)$ and a periodic point $p\in \Lambda$ of
period $\kappa(p)= \sum_{i=1}^k (n_i+m_i)$ satisfying
$$
d_{n_i}(f^{\sum_{j<i} (n_j+m_j)}(p), x_i) <\vep
\quad\text{for every $1\le i \le k$.}
$$
Then,  by triangular inequality, it is not hard to check that
\begin{align*}
 d_*(\mu_p,\hat \eta)
	& \le
	d_*\Big( \frac1{\kappa(p)} \sum_{i=1}^{\kappa(p)} \delta_{f^i(p)},
	\frac1{\kappa(p)}   \sum_{i=1}^k  \sum_{j=0}^{n_i-1} \delta_{f^j(x_i)}
	\Big) \\
	& +
	d_*\Big(\frac1{\kappa(p)}   \sum_{i=1}^k  \sum_{j=0}^{n_i-1} \delta_{f^j(x_i)},
	\frac{1}{\sum_{j=1}^k  n_j}  \sum_{i=1}^k  \sum_{j=0}^{n_i-1} \delta_{f^j(x_i)}
	\Big) \\
	& +
	d_*\Big(   \frac{1}{\sum_{j=1}^k  n_j}  \sum_{i=1}^k  \sum_{j=0}^{n_i-1} \delta_{f^j(x_i)}  , \sum_{i=1}^k  \frac{\alpha_i}{n_i} \sum_{j=0}^{n_i-1} \delta_{f^j(x_i)}
	\Big)\\
	& +
	d_*\Big( \sum_{i=1}^k  \frac{\alpha_i}{n_i} \sum_{j=0}^{n_i-1} \delta_{f^j(x_i)},  \sum_{i=1}^k  {\alpha_i} \eta_i \Big)
	\\
	& \le  \frac{2\,k\,m(\vep)}{\kappa(p)}
	+\frac{3\zeta}{10}
	\le \frac\zeta2,
\end{align*}
which proves the lemma.
\end{proof}

\medskip

\begin{proof}[Proof of Theorem~\ref{thm:E}]

Let $d\ge 2$ be an integer and let $\mathfrak R_3:=\mathcal R_3$  be the $C^0$-residual subset in $\text{Homeo}_{0,\lambda}^{}(\mathbb T^d)$ formed by homeomorphisms with the specification property (cf. Corollary~\ref{cor:Hgluing}).
Given $f\in \mathcal R_3$ and a lift $F$ recall that
$$
\rho_{erg}(F)\subseteq \rho_{p}^{}(F)\subseteq\rho(F) \subseteq \rho_{inv}(F),
$$
and that $\rho_{inv}(F)$ is convex.

We claim that $\rho(F)\supseteq \rho_{inv}(F)$ for every lift $F$ of a homeomorphism $f\in \mathcal R_3$.
Take an arbitrary $v\in \rho_{inv}(F)$ and $\eta\in \mathcal M_{inv}(f)$ so that $v=\int \varphi_F \, d\eta$. 
By specification, there exists a sequence $(p_n)_n$ of periodic points so that $\mu_{p_n} \to \eta$
as $n\to\infty$ (cf. \cite{DKS}). In particular, since $\varphi_F$ is continuous, if $\kappa_n \ge 1$ denotes the prime period of $p_n$
and $\tilde p_n\in \pi^{-1}(p_n)$ then
$$
\frac{F^{\kappa_n}(\tilde p_n )- \tilde p_n}{\kappa_n}
	= \frac1{\kappa_n} \sum_{j=0}^{\kappa_n-1} \varphi_F (f^j(p_n)) 	 \to \int \varphi_F \, d\eta = v
	\quad\text{as} \; n\to\infty.
$$
This ensures that $v\in \rho(F)$. Therefore  $\rho(F) = \rho_{inv}(F)$ is convex, which proves item (1) in the theorem.

The proof of item (2) is completely analogous, using the restriction of the rotation set to each isolated chain recurrent class 
instead of the generalized rotation set, Corollary~\ref{lem1} instead of Corollary~\ref{cor:Hgluing} and taking $\mathfrak R_4:=\mathcal R_0$.
\end{proof}

\color{black}

\section{The set of points with historic behavior}\label{sec:irregular}

The main goal of this section is to prove Theorems~ \ref{thm:A} and \ref{thm:B}, which claim that the set of points with historic behavior
for continuous maps with the gluing orbit property is topologically large. Actually, this is established by means of three different
measurements of topological complexity:  Baire genericity, full topological entropy and full metric mean dimension.
The arguments involved in the proofs of Theorems~\ref{thm:A} and \ref{thm:B} are substantially different and their proofs occupy Subsections~\ref{subsec:Baire} and \ref{subsec:htopmmd}, respectively.

\subsection{Baire genericity of historic behavior}\label{subsec:Baire}

This subsection is devoted to the proof of Theorem~\ref{thm:A}, whose strategy is strongly inspired by \cite{BLV,LW}. The differences lie on the fact that, due to the higher dimensional features of observables, we need to restrict to connected subsets in the set of all accumulation vectors, and that we have transition time functions instead of a determined time to shadow pieces of orbits
(see Remark~\ref{rmk:differencesG} below).

Let $f : X \rightarrow X$ be a continuous map with the gluing orbit property on a compact metric space $X$ and let
$\varphi : X \rightarrow \mathbb R^d$ be a continuous function so that $\mathcal V_\varphi$ is non-trivial.
Let $\Delta \subset \mathcal V_\varphi$ a non-trivial connected set, define
\begin{equation}\label{eqxD}
X_\Delta
	:=\Big\{x\in X \colon  \Delta\subseteq \mathcal V_\varphi(x)  \Big\}.
\end{equation}
Theorem~\ref{thm:A} will be a consequence of the following characterization of the irregular set for vector valued observables 
and maps with gluing orbit property, thus extending previous similar results for real valued observables and maps with
specification.

\begin{proposition}\label{prop:irreg}
Let $X$ be a compact metric space, $f \in \text{Homeo}(X)$ satisfy the gluing orbit property, $\varphi: X \to \mathbb R^d$ be continuous such that $\mathcal V_\varphi$ is non-trivial.
If $\Delta \subset \mathcal V_\varphi$ is a non-trivial connected set then $X_\Delta$ is Baire residual in $X$.
\end{proposition}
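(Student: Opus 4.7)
The plan is to realize $X_\Delta$ as a countable intersection of open dense subsets of $X$. Since $\Delta\subset\mathbb R^d$ is separable, fix a countable dense subset $\{v_k\}_{k\ge 1}\subset\Delta$ and, for integers $j,k\ge 1$, set
\[
A_{j,k} := \Big\{x\in X : \exists\, n\ge j \text{ with } \big\|\tfrac{1}{n}S_n\varphi(x) - v_k\big\| < \tfrac{1}{k}\Big\},
\]
where $S_n\varphi(x)=\sum_{i=0}^{n-1}\varphi(f^i(x))$. Continuity of $x\mapsto \frac{1}{n}S_n\varphi(x)$ for each fixed $n$ makes $A_{j,k}$ open. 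For $x\in\bigcap_{j,k}A_{j,k}$ every $v_k$ is an accumulation point of the Birkhoff averages of $x$, so $\{v_k\}_{k\ge 1}\subseteq\mathcal V_\varphi(x)$; since $\mathcal V_\varphi(x)$ is closed and $\{v_k\}$ is dense in $\Delta$, this yields $\Delta\subseteq\mathcal V_\varphi(x)$, i.e.\ $x\in X_\Delta$.

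The heart of the argument is to prove each $A_{j,k}$ is dense, using the gluing orbit property. Fix a non-empty open $U\subset X$, pick $z\in U$, and let $M := \sup_X\|\varphi\|<\infty$. Choose $\vep>0$ small enough that $B_\vep(z)\subset U$ and $\var(\varphi,\vep) < 1/(3k)$, and let $m = m(\vep)$ be the integer provided by Definition~\ref{def:gluing}. Since $v_k\in\mathcal V_\varphi=\bigcup_{y\in X}\mathcal V_\varphi(y)$, there exist $y_k\in X$ and a sequence $N_\ell\to\infty$ with $\frac{1}{N_\ell}S_{N_\ell}\varphi(y_k)\to v_k$; choose $N=N_\ell$ large enough that $N\ge j$, $\|\frac{1}{N}S_N\varphi(y_k) - v_k\| < 1/(3k)$, and $2(m+1)M/N < 1/(3k)$. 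Applying the gluing orbit property to the ordered pair of points $(z, y_k)$ with lengths $(1, N)$ produces $0\le p_1\le m$ and a point $w\in X$ with $d(w,z)<\vep$ (hence $w\in U$) and $d(f^{1+p_1+i}(w), f^i(y_k))<\vep$ for $0\le i\le N-1$.

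Setting $n := 1+p_1+N\ge j$, decompose $S_n\varphi(w)$ as an initial block of $p_1+1$ terms (contributing at most $(m+1)M/N$ to $\|\frac{1}{n}S_n\varphi(w)\|$) plus the shadowing tail $\sum_{i=0}^{N-1}\varphi(f^{1+p_1+i}(w))$, which lies within $N\var(\varphi,\vep)$ of $S_N\varphi(y_k)$. After normalising by $n$ (which differs from $N$ by at most $m+1$), the three smallness bounds combine to give $\|\frac{1}{n}S_n\varphi(w)-v_k\| < 1/k$, so $w\in A_{j,k}\cap U$ and density is established. The main technical hurdle is the tension between the variation estimate (which forces $\vep$ small) and the gluing bound $m(\vep)$, which may grow as $\vep\to 0$ and inflate the wasted initial segment; this is resolved by a three-scale choice, first fixing $\vep$ as a function of $U$ and $k$, then $m=m(\vep)$, and finally $N$ much larger than $m$, so that both the boundary contribution and the shadowing error are diluted by averaging over the long tail.
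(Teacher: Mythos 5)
Your proof takes a genuinely different and substantially simpler route than the paper's. The paper follows the template of Li--Wu \cite{LW}: it builds an elaborate nested $G_\delta$ set $\mathcal R$ out of maximal separated subsets $W_{k,i}(q)$ indexed over a countable dense set $D\subset X$, a recursive family of shadowing points $L_{k,i}(q)$, and transition time functions, and then shows that $\mathcal R\subset X_\Delta$ is a dense $G_\delta$. Your approach instead observes that $X_\Delta$ is directly realized as a countable intersection of open sets $A_{j,k}$ and proves density of each $A_{j,k}$ by a single two-block application of the gluing orbit property. The three-scale choice (first $\varepsilon$ as a function of $U$ and $k$, then $m(\varepsilon)$, then $N\gg m(\varepsilon)$) is exactly the right way to neutralize the a priori unbounded growth of the gluing times, and the resulting density estimate is correct. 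The paper's heavier construction is motivated by the companion Theorem~\ref{thm:B} on topological pressure and metric mean dimension, where similar machinery is reused; for the residuality statement alone, your shortcut is sufficient and arguably preferable, and it even identifies $X_\Delta$ itself with the dense $G_\delta$ set $\bigcap_{j,k}A_{j,k}$.

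There is one small logical slip. You assert that for $x\in\bigcap_{j,k}A_{j,k}$, ``every $v_k$ is an accumulation point of the Birkhoff averages.'' With $A_{j,k}$ as defined, the tolerance $1/k$ is tied to the \emph{same} index as the vector $v_k$, so $\bigcap_{j}A_{j,k}$ only forces the Birkhoff averages to enter the $1/k$-ball around $v_k$ infinitely often --- never any closer --- and hence does not by itself yield $v_k\in\mathcal V_\varphi(x)$. The conclusion $\Delta\subseteq\mathcal V_\varphi(x)$ is nonetheless correct, but the density step must be applied before, not after, passing to accumulation points: given $v\in\Delta$ and $\varepsilon>0$, use that $\Delta$ is non-trivial connected (hence without isolated points) to choose $k$ with \emph{both} $\|v_k-v\|<\varepsilon/2$ and $1/k<\varepsilon/2$; then $x\in\bigcap_{j}A_{j,k}$ gives infinitely many $n$ with $\|\frac{1}{n}S_n\varphi(x)-v\|<\varepsilon$, so $v\in\mathcal V_\varphi(x)$. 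Alternatively, pass to a triple-indexed family $A_{j,k,\ell}$ with tolerance $1/\ell$, or enumerate the dense subset of $\Delta$ with each vector repeated infinitely often. With that repair, the argument is complete.
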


\begin{remark}
The set $\mathcal V_\varphi$ corresponds to the pointwise rotation set of $\varphi$, which needs not be connected in general.
Since Baire residual subsets are preserved by finite intersection, a simple argument by contradiction ensures that under the assumptions of Proposition~\ref{prop:irreg} the set $\mathcal V_\varphi$ is connected. In particular, the latter ensures that
$
X_{\varphi, f}^{wild}
	:=\{x\in X \colon \mathcal V_\varphi(x)
	= \mathcal V_\varphi \}
$
is Baire residual.
\end{remark}

The remaining of the subsection is devoted to the proof of Proposition~\ref{prop:irreg}. Let $D \subset X$ be countable and dense. Let $\vep > 0$ be arbitrary and fixed and let ~$m(\vep)$ be given by the gluing orbit property (cf. Subsection~\ref{gop}).
As $\mathcal V_\varphi\neq \emptyset$
then $\mathcal V_\varphi$ is not a singleton. Let $\Delta\subset \mathcal V_\varphi$ be a non-trivial connected set and, for any $k\ge 1$ let $(v_{k,i})_{1\le i \le a_k}$ be a
$\frac{1}{k}$-dense set of vectors in $\Delta$ so that
\begin{equation}\label{eq:densevec}
\| v_{k,i+1} - v_{k,i} \| < \frac{1}{k}
	\;\text{ for } 1 \le i \le a_k
	\quad\text{and}\quad
	\| v_{k,a_k} - v_{k+1,1} \| < \frac{1}{k}. 	
\end{equation}

For $w\in\mathcal V_\varphi$, $\delta>0$, $n\in\mathbb N$ set
\begin{eqnarray*}\label{def-p}
P(w, \delta, n) = \Big\{x\in~X :  \big\|\frac{1}{n}\sum_{i=0}^{n - 1}\varphi(f^i (x)) - w\big\| < \delta\Big\}.
\end{eqnarray*}

Given $k\ge 1$ and $1\le i \le a_k$, let $\{\delta_{k, i}\}_{k\geq 1,1\le i \le a_k}$ be a sequence of positive real  and $\{n_{k,i}\}_{k\geq 1,1\le i \le a_k}$ be a  sequence of integers tending to zero and infinity, respectively, so that
\begin{align*}\delta_{1,1}^{} > \delta_{1,2}^{} > \dots > \delta_{1,a_1}^{} > \delta_{2,1}^{} > \delta_{2,2}^{} > \dots > \delta_{2,a_2}^{} > \dots, \\
n_{1,1}^{} < n_{1,2}^{} < \dots < n_{1,a_1^{}}^{} < n_{2,1}^{} < n_{2,2} < \dots < n_{2,a_2} < \dots
\end{align*}
that $n_{k,i}^{}\gg m_k$, meaning here $\lim_{k\to\infty} \frac{m_k}{n_{k,i}}=0$, for all $1\le i \le a_k$, where $m_k:=m(\vep/2^{b_{k}})$,  with $b_0=0$ and $b_k = \sum_{1\leq i\leq k} a_i$, and $P(v_{k,i}, \delta_{k,i}, n_{k,i})\neq\emptyset$, for all $k\geq 1$ and $1 \le   i \le  a_k$. Note that $b_k\to\infty $ as
$k\to\infty$.

Given $q\in D$, $k\geq 1$ and $1\le i \le a_k$, let $W_{k,i}(q)$ be a maximal $(n_{k,i}, 8\vep)$-separated subset of $P(v_{k,i}, \delta_{k,i}, n_{k,i})$. We index the elements of $W_{k,i}(q)$ by $x_j^{k,i}$, for $1\le j \le \#W_{k,i}(q)$. Choose also a strictly increasing sequence of integers  $\{N_{k, i}^q\}_{k\geq 1, 1\leq i \leq a_k}$ so that
\begin{align}\label{limdosN9}
& \lim_{k \to~\infty}\frac{n_{k,i+1}^{} + m_k}{N_{k,i}^{q}}=0, \quad \text{for every } 1\leq i \leq a_k-1\nonumber \\
& \lim_{k \to~\infty}\frac{n_{k+1,1}^{} + m_{k+1}}{N_{k,a_k}^{q}}=0, \quad \text{for } k\geq 1  \\
& \lim_{k \to~\infty}\frac{N_{1,1}^q (n_{1,1}^{}+m_1)+ \dots + N_{k,i}^q (n_{k,i}^{}+m_k)}{N_{k,i+1}^{q}}=0
\quad \text{for every },~ 1\leq i \leq a_k -1,\quad  \text{and }\nonumber\\
& \lim_{k \to~\infty}\frac{N_{1,1}^q (n_{1,1}^{}+m_1)+ \dots + N_{k,a_k}^q (n_{k,a_k}^{}+m_k)}{N_{k+1,i}^{q}}=0
\quad \text{for every } ,~k\geq 1\nonumber
\end{align}
We shall omit the dependence of $W_{k,i}(q)$ and $N_{k,i}^q$ on $q$ when no confusion is possible. The idea is to construct points that shadow finite pieces of orbits associated with the vectors $v_{k,i}$ repeatedly.

We need the following auxiliary construction.
The gluing orbit property ensures that for every
$
\underline{x_{k,i}}:=(x_{1}^{k,i}, \dots , x_{N_{k,i}^{}}^{k,i})\in (W_{k,i})^{N_{k,i}}
$
there exists a point $y=y(\underline{x_{k,i}}) \in  X$ and transition time functions
$$
p_{k,i}^{j} : W_{k,i}^{N_{k,i}}\times \mathbb R_{+} \to \mathbb N,
	\qquad j=1, 2, \dots , N_{k,i}-1
$$
bounded above by $m(\frac{\vep}{2^{b_{k-1}+i}}) \le m_k$ so that
\begin{eqnarray}\label{defbj0}
d_{n_{k,i}^{}}(f^{\text{e}_j^{}}(y), x_j^{k,i})<\frac{\vep}{2^{b_{k-1}+i}}, \quad\text{for~every }~j=1, 2, \dots , N_{k,i}-1,
\end{eqnarray}
where
\begin{equation*}\label{defbj}
\text{e}_j^{} =\begin{cases}
0 & \text{if $j=1$}\\
(j - 1)\,n_{k,i}^{} + \sum_{r=1}^{j - 1} p_{k,i}^{r} & \text{if $j= 2, \dots , N_{k,i}$}
\end{cases}.
\end{equation*}

\begin{remark}\label{rmk:differencesG}
For $k$ and $j$ as above we have that
$
p_{k,i}^{j}=p_{k,i}^{j}(x_1^{k,i}, x_2^{k,i}, \dots, x_{N_{k,i}}^{k,i}, \vep)
$
is a function that describes the time lag that the orbit of $y=y({\underline{x_{k,i}}})$ takes to jump from a $\frac\vep{2^k}$-neighborhood of $f^{n_{k,i}^{}}(x_{j}^{k,i})$ to a $\frac\vep{2^k}$-neighborhood of $x_{j+1}^{k,i}$, and it is bounded above by $m_k$. In contrast with the case when $f$ has the specification property, the previous functions need not be constant and, consequently, the collection of points of the form $y=y(\underline{x_{k,i}})$ need not be $3\vep$-separated by a suitable iterate of the dynamics. For that reason, not only an argument to select a `large set' of distinguishable orbits would require to compare points with the same transition times, which strongly differs from \cite{BLV,LW}.
\end{remark}

We order the family $\{W_{k,i}\}_{k\ge 1, 1\le i  \le a_k}$
lexicographically: $W_{k,i} \prec W_{s,j}$ if and only if $k \le s$ and $i\le j$ whenever $k=s$.
We proceed to make a recursive construction of points in a neighborhood of $q$ that shadow points $N_{k,i}$ in the family $W_{k,i}$ successively with bounded  time lags in between.  More precisely, we construct a family $\{L_{k,i}(q)\}_{k \geq 0, 1\le i \le a_k^{}}$ of sets
(guaranteed by the gluing orbit property) contained in a neighborhood of $q$ and a family of positive integers $\{l_{k,i}^{}\}_{k\geq 0, 1\le i \le a_k^{}}$
(also depending on $q$) corresponding to the time during the shadowing process. Set:
\begin{itemize}
\item $L_{0,i}(q) =  \{q\}$ and $l_{0,i}^{} = N_{0,i}^{} n_{0,i}^{} = 0$;

\item $L_{1,1}(q) = \{ z=z(q, y(\underline{x_{1,1}})) \in X  : \underline{x_{1,1}}\in W_{1,1}^{N_{1,1}}\}$ and
$l_{1,1}^{} = p_{1, 1}^0 + t_{1,1}^{}$ with $t_{1,1}^{}= N_{1,1} n_{1,1}^{} + \sum_{r=1}^{N_{1,1} - 1} p_{1,1}^{r}$,
where $z=z(q, y(\underline{x_{1,1}}))$ satisfies $d(z, q) < \frac{\vep}{2}$
and $d_{t_{1,1}^{}}(f^{p_{1, 1}^0}(z), y(\underline{x_{1,1}})) < \frac{\vep}{2}$,
and $y(\underline{x_{1,1}})$ is defined by ~\eqref{defbj0} and $0\le p_{1, 1}^0 \le m(\frac\vep{2^2})$ is given by the gluing orbit property;

\item if  $i=1$\\
$L_{k,1}(q) = \{ z=z(z_0, y(\underline{x_{k,1}})) \in X  \colon \underline{x_{k,1}} \in  W_{k,1}^{N_{k,1}}
\text{ and } z_0 \in L_{k-1,a_{k-1}} \}$,
and $l_{k,1} = l_{k-1,a_{k-1}} + p_{k,1}^{0} + t_{k,1}$, with
$t_{k,1}= N_{k,1} n_{k,1}^{} + \sum_{r=1}^{N_{k,1} - 1} p_{k,1}^{r}$,
where the shadowing point $z$ satisfies
$$
\qquad d_{l_{k-1,a_{k-1}}}(z, z_0) < \frac{\vep}{2^{b_{k-1} + 1}} \;\text{ and }\;
d_{t_{k,1}}(f^{l_{k-1,a_{k-1}}^{} + p_{k,1}^{0}}(z), y(\underline{x_{k,1}})) < \frac{\vep}{2^{b_{k-1} + 1}}.
$$

\item if  $i~\neq~1$\\
$L_{k,i}(q) = \{ z=z(z_0, y(\underline{x_{k,i}})) \in X  \colon \underline{x_{k,i}} \in  W_{k,i}^{N_{k,i}}
\text{ and }  z_0 \in L_{k,i-1} \}$,
and $l_{k,i} = l_{k,i-1} + p_{k,i}^{0} + t_{k,i}$, with
$t_{k,i}= N_{k,i} n_{k,i}^{} + \sum_{r=1}^{N_{k,i} - 1} p_{k,i}^{r}$,
where the shadowing point $z$ satisfies
$$
d_{l_{k,i-1}}(z,z_0) < \frac{\vep}{2^{b_{k-1} + i}}~\text{and}~
d_{t_{k,i}}(f^{l_{k,i-1}^{} + p_{k,i}^{0}}(z), y(\underline{x_{k,i}})) < \frac{\vep}{2^{b_{k-1} + i}}.
$$
\end{itemize}

The previous points $y=y(\underline{x_{k,i}})$ are defined as in ~\eqref{defbj0}.
By construction, for every $k\ge 1$ and $1\le i \le a_k-1$,
\begin{equation}\label{deflk}
l_{k,i}^{}=\sum_{r=1}^{k}\sum_{s=1}^{a_r} N_{r,s} n_{r,s}^{} + \sum_{r=1}^{k} \sum_{s=1}^{a_r^{} - 1} \sum_{t=0}^{N_{r,s}-1}  p_{r,s}^t.
\end{equation}

\begin{remark} \label{oblimitali}
Note that $l_{k,i}^{}$ and $t_{k,i}^{}$ are functions (as these depend on $p_{k, i}^j$)  and, by definition of $N_{k,i}$ cf. \eqref{limdosN9}, one has that
$
\frac{\|l_{k,i}^{}\|}{N_{k,i+1}} \leq \frac{ \sum_{r=1}^{k} \sum_{s=1}^{a_r^{}} N_{r,s}( n_{r,s}^{} + m_r^{})}{N_{k,i+1}}
$
tends to zero as $k \to\infty$.
\end{remark}

For every $k\geq 0$, $1\le i \le a_k$, $q\in D$ and $\vep>0$ define
\begin{eqnarray*}
R_k (q,\vep, i) = \bigcup_{z\in L_{k,i}(q)} \widetilde{B}_{l_{k,i}^{}} \Big(z, \frac\vep{2^{b_{k-1}+i}}\Big)
	\quad\text{and}\quad
	R(q,\vep) = \bigcap_{k = 0}^{\infty} \bigcap_{i = 1}^{a_k^{}} R_k (q,\vep, i),
\end{eqnarray*}
where $\widetilde{B}_{l_{k,i}} (x, \delta)$ is the set of points $y \!\in\! X$ so that $d(f^\alpha (x), f^\alpha (y)) < \delta$ for all iterates
$
0 \le \!\alpha\! \le l_{k,i-1}^{}-1$ and $d(f^\beta (x), f^\beta (y)) \leq \delta \, \text{for every}\,  l_{k,i-1}^{}\le \beta \le l_{k,i}^{} - 1.
$

Consider also the sets
\begin{equation}
 \widetilde{\mathcal R}= \bigcup_{j = 1}^{\infty} \bigcup_{q\in D} R(q, \frac1{j})
 	=\bigcup_{j = 1}^{\infty} \bigcup_{q\in D}\bigcap_{k = 0}^{\infty} \bigcap_{i = 1}^{a_k^{}}
	\bigcup_{z\in L_{k,i}(q)} \widetilde{B}_{l_{k,i}^{}}\Big(z, \frac{1}{j2^{b_{k-1}+i}}\Big),
\end{equation}
and finally
\begin{equation}\label{eq:residual}
\mathcal R=  \bigcap_{k = 0}^{\infty}\bigcup_{j = 1}^{\infty} \bigcup_{q\in D} \bigcap_{i = 1}^{a_k^{}}
\bigcup_{z\in L_{k,i}(q)}  \widetilde{B}_{l_{k,i}^{}}\Big(z, \frac{1}{j2^{b_{k-1} + i}}\Big).
\end{equation}
It is clear from the construction that $R(q,\vep)\subset B(q,\vep)$ for every $q\in D$ and $\vep>0$, and that $\tilde {\mathcal   R} \subset \mathcal R$. The following lemma, identical to Propositions 2.2 and 2.3 in ~\cite{LW}, ensures that $\mathcal R$ is a Baire generic subset of $X$.

\begin{lemma}\label{gdeltadenso}
$\mathcal R$ is a $G_\delta$-set and it is dense in $X$.
\end{lemma}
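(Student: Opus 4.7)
The plan is to verify the two properties of $\mathcal R$ separately, adapting the template of \cite[Propositions 2.2 and 2.3]{LW} to the present vector-valued/gluing-orbit setting.

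For the $G_\delta$ property, I would write $\mathcal R=\bigcap_{k\ge 0} A_k$ with
$$
A_k = \bigcup_{j=1}^{\infty} \bigcup_{q\in D} \bigcap_{i = 1}^{a_k^{}} \bigcup_{z\in L_{k,i}(q)} \widetilde{B}_{l_{k,i}^{}}\Big(z, \tfrac{1}{j\,2^{b_{k-1} + i}}\Big)
$$
and argue that each $A_k$ is open. The sets $W_{k,i}$ are $(n_{k,i},8\vep)$-separated in a compact space, hence finite; inductively each $L_{k,i}(q)$ is finite, and $a_k<\infty$, so the inner intersections and unions in $A_k$ are finite combinations. Although $\widetilde{B}_{l_{k,i}}(z,\delta)$ is not itself open, because some defining inequalities are non-strict, relaxing the radius from $\tfrac{1}{j\,2^{b_{k-1}+i}}$ to $\tfrac{1}{(j-1)\,2^{b_{k-1}+i}}$ promotes every non-strict inequality into a strict one. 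Hence any $y$ in the $(j,q)$-slice of $A_k$ belongs to a finite intersection of \emph{open} dynamical balls contained in the $(j-1,q)$-slice, so $A_k$ is open and $\mathcal R$ is $G_\delta$.

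For density, since $D$ is dense in $X$ it suffices to check that $R(q,\vep)\neq\emptyset$ for every $q\in D$ and every sufficiently small $\vep>0$, because by construction $R(q,\vep)\subset B(q,\vep)\cap \widetilde{\mathcal R}\subset B(q,\vep)\cap \mathcal R$. Non-emptiness comes from a Cantor-intersection argument: the gluing orbit property applied to any $z_0\in L_{k,i-1}(q)$ and any $\underline{x_{k,i}}\in W_{k,i}^{N_{k,i}}$ delivers, with transition times bounded by $m_k$, a shadowing point $z\in L_{k,i}(q)$ lying in $\widetilde{B}_{l_{k,i-1}}(z_0,\vep/2^{b_{k-1}+i-1})$; this yields both non-emptiness of each $L_{k,i}(q)$ and the nesting $R_{k',i'}(q,\vep)\subset R_{k,i}(q,\vep)$ in the lexicographic order on $(k,i)$. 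The closures $\overline{R_{k,i}(q,\vep)}$ are finite unions of closed Bowen balls in the compact space $X$, so the finite intersection property produces a point $y$ in $\bigcap_{k,i}\overline{R_{k,i}(q,\vep)}$. The summable radii $\vep/2^{b_{k-1}+i}$ together with the hybrid structure of $\widetilde{B}$ (strict inequalities at times before $l_{k,i-1}$, which for later levels absorb the non-strict inequalities at earlier levels) guarantee that $y$ lies in every $R_{k,i}(q,\vep)$ itself, not merely in its closure. Thus $R(q,\vep)\neq\emptyset$ and $\mathcal R$ is dense in $X$.

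The main technical obstacle is the dual role played by the hybrid balls $\widetilde{B}_{l_{k,i}}$: strict inequalities on $[0,l_{k,i-1}-1]$ are exactly what allows $A_k$ to be thickened into an open set for the $G_\delta$ half, while the non-strict inequalities on $[l_{k,i-1},l_{k,i}-1]$ are needed so that the nested inclusion $R_{k',i'}\subset R_{k,i}$ survives passage to the limit in the density half. The only genuine difference from \cite{LW} is that, under gluing orbit rather than specification, the shadowing times $l_{k,i}$ depend on the specific choice of pseudo-orbit through the non-constant transition functions $p_{k,i}^{j}\le m_k$ recalled in Remark \ref{rmk:differencesG}; this is absorbed by working slice-by-slice over the countable index set $D\times \mathbb N$ and does not alter the topological conclusions.
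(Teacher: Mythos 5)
Your overall plan is the same as the paper's: use $\widetilde{\mathcal R}\subset\mathcal R$ and $R(q,\vep)\subset B(q,\vep)$ for density, and for the $G_\delta$ property replace the hybrid balls $\widetilde B$ by open Bowen balls and show that each $A_k$ is open. Within that frame, however, the two proofs diverge in a way worth recording.

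For the $G_\delta$ half, the paper introduces the open set $G_k(q,\vep,i)=\bigcup_{z\in L_{k,i}(q)} B_{l_{k,i}}(z,\vep/2^{b_{k-1}+i})$ and proves the key chain of inclusions $G_k(q,\vep,i)\subset R_k(q,\vep,i)$ together with $R_k(q,\vep,i+1)\subset G_k(q,\vep,i)$ and $R_{k+1}(q,\vep,1)\subset G_k(q,\vep,a_k)$. The last two follow from the recursion defining $L_{k,i}(q)$: each $z\in L_{k,i+1}$ lies within $\vep/2^{b_{k-1}+i+1}$ (Bowen-$l_{k,i}$) of some $z_0\in L_{k,i}$, and the triangle inequality then gives a \emph{strict} inclusion into an open Bowen ball at the doubled radius $\vep/2^{b_{k-1}+i}$. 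In short, the paper shifts the index $i$ down by one and gains a factor of $2$ in the radius, using the recursive halving already built into the construction. You instead shift $j\mapsto j-1$, relying only on the generic inclusion $\widetilde B_n(z,\delta)\subset B_n(z,\delta')$ for $\delta'>\delta$. That observation is correct, but it does not cover the slice $j=1$ (there is no $j=0$ slice to absorb it), so your claim that each $A_k$ coincides with its open version is incomplete. This is fixable (e.g. by increasing the exponent $b_{k-1}+i$ by one rather than decreasing $j$, which is exactly what the paper does, or by re-indexing the union to start at $j=2$), but as written the $G_\delta$ argument has a small hole that the paper's mechanism avoids cleanly.

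For the density half, you actually go further than the paper. The paper takes it for granted that $R(q,1/j)\neq\emptyset$ and simply writes ``given $y\in R(q,1/j)$''. You supply the missing verification: the gluing orbit property yields non-empty nested sets $L_{k,i}(q)$ with $R_{k',i'}(q,\vep)\subset R_{k,i}(q,\vep)$ lexicographically, the closures of the $R_{k,i}$ are nested compacta, and the halving radii together with the hybrid structure of $\widetilde B$ promote the limit point from $\bigcap\overline{R_{k,i}}$ into $\bigcap R_{k,i}=R(q,\vep)$. This is a genuine addition and makes the density argument more self-contained than the paper's.

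Summing up: your proposal is essentially correct and follows the same strategy, with a more careful density argument but a small technical gap in the $j=1$ slice of the openness argument; the paper's index-$i$ shift using the recursive structure of $L_{k,i}$ is the cleaner way to close that gap.
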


\begin{proof}
First we prove denseness.
Since
$
\widetilde{\mathcal R} \subset \mathcal R,
$
it is enough to show that $\widetilde{\mathcal R}\cap B(x, r) \neq \emptyset$ for every $x\in X$ and $r > 0$. In fact, given $x\in X$ and $r > 0$, there exists $j\in\mathbb N$ and $q \in D$ such that $d(x, q) < 1/j < r/2$. Given $y\in R(q,\frac1j)$ it holds that $d(q, y)<\frac1j$  because $R(q,\frac1j)\subset B(q,\frac1j)$. Therefore, $d(x, y) \leq d(x, q) + d(q, y) < 2/j < r.$ This ensures that $\widetilde{\mathcal R}\cap B(x, r) \neq \emptyset$.

Now we prove that $\mathcal R$ is a $G_\delta$-set.
Fix $j\in\mathbb N$ and $q\in D$. For any $k \ge 1$ and $1\le i \le a_k$, consider the open set
$$
G_{k}(q,\vep,i):=\bigcup_{z\in L_{k,i}(q)} B_{l_{k,i}^{}}\Big(z, \frac{\vep}{2^{b_{k-1} + i}}\Big)
$$
and note that $G_{k}(q,\vep,i) \subset  R_{k}(q,\vep,i)$ for any $k \ge 1$ and $1\le i \le a_k^{}$.
We claim that $ R_{k}(q,\vep,i+1)\subset  G_{k}(q,\vep,i)$ and  $ R_{k+1}(q,\vep,1)\subset G_{k}(q,\vep,a_k^{})$, for any $k \ge 1$ and $1\le i \le a_k-1$. The claim implies that
$$
\bigcup_{j = 1}^{\infty} \bigcup_{q\in D}
	\bigcap_{i = 1}^{a_k^{}} R_{k}(q,\vep,i)
	= \bigcup_{j = 1}^{\infty} \bigcup_{q\in D}
	\bigcap_{i = 1}^{a_k^{}} G_{k}(q,\vep,i),
$$
and guarantees that $\mathcal R$ is a $G_\delta$-set.

Now we proceed to prove the claim. We  prove that
$ R_{k}(q,\vep,i+1)\subset  G_{k}(q,\vep,i)$ for any $k \geq 1$ and $1\le i \le a_k^{} - 1$ (the proof of the $ R_{k+1}(q,\vep,i)\subset  G_{k}(q,\vep,a_k^{})$ is analogous). Given $y\in  R_{k}(q,\vep,i+1)$, there exists $z\in L_{k,i+1}(q)$ such that $y \in \widetilde{B}_{l_{k,i+1}^{}}(z, \frac{1}{j2^{b_{k-1} + (i+1)}})$. By definition of $L_{k,i+1}(q)$, there exists $z_0$ such that $d_{l_{k,i}^{}}(z,z_0)<\frac1{j2^{b_{k-1} + (i+1)}}$. Therefore,
$$
d_{l_{k,i}^{}}(y,z_0)
\leq d_{l_{k,i}^{}}(y,z) + d_{l_{k,i}^{}}(z,z_0)
< \frac{1}{j2^{b_{k-1} + (i+1)}}+\frac{1}{j2^{b_{k-1} + (i+1)}}
=\frac{1}{j2^{b_{k-1}+i}}
$$
and consequently $y\in  G_{k}(q,\vep,i)$. This proves the claim and completes the proof of the lemma.
\end{proof}

We must show that $\mathcal R \subset X_\Delta$, that is $\Delta \subseteq \mathcal V_\varphi (x)$ for every $x\in \mathcal R$.
The proof follows some ideas from \cite[Proposition~2.1]{LW}. We provide a sketch of the argument for completeness.
Given $x\in \mathcal R$ fixed, for any $k > 1$, there exist integers
$j\in\mathbb N$, $q\in D$ and $z\in L_{k, i+1}(q)$ such that
\begin{eqnarray}\label{est29}
d_{l_{k, i+1}^{}} (z,x)<\frac{1}{j2^{b_{k-1}}} \; ~~\text{for every } 1\le i \le a_k^{}.
\end{eqnarray}

We prove that $ \Delta \subseteq \mathcal V_\varphi (x) $.
If $v\in \Delta$ then for any $k\ge 1$ there exists $1 \le i_k \le a_k$ such that $v\in B(v_{k,i_k}^{}, \frac{1}{k})$. We need the following:

\begin{lemma}\label{lemma21}
Take $k \geq 1$ and $1 \le i_k \le a_k $. If
$$R_{k,i}^{q}:=\max_{z\in L_{k,i}(q)}\Big\|\sum_{r=0}^{l_{k,i}^{} - 1}\varphi(f^{r} (z)) - l_{k,i}^{}\,  v_{k,i}^{}\Big\|$$
then~~$\frac{R_{k,i}^{q}}{l_{k,i}^{}} \to 0, ~~~\text{as}~k\to\infty$.
\end{lemma}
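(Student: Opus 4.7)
The plan is to decompose the Birkhoff sum along the orbit of an arbitrary $z\in L_{k,i}(q)$ according to the recursive construction of this set. By definition, $z$ shadows some $z_0\in L_{k,i-1}(q)$ (or $L_{k-1,a_{k-1}}(q)$ if $i=1$) during the first $l_{k,i-1}^{}$ iterates, is followed by a transition of length $p_{k,i}^0\le m_k$, and then shadows the concatenation obtained from the gluing orbit property of $N_{k,i}^{}$ blocks from $W_{k,i}$ of length $n_{k,i}^{}$ each, separated by $N_{k,i}^{}-1$ transition intervals of lengths $p_{k,i}^{r}\le m_k$. Thus I would split
$$\sum_{r=0}^{l_{k,i}^{}-1}\varphi(f^r(z)) \;=\;\sum_{r=0}^{l_{k,i-1}^{}-1}\varphi(f^r(z)) \;+\; A_{\mathrm{trans}} \;+\; \sum_{j=1}^{N_{k,i}^{}} \sum_{s=0}^{n_{k,i}^{}-1} \varphi(f^{\mathrm{e}_j+s}(z)),$$
where $A_{\mathrm{trans}}$ collects the contributions of the $N_{k,i}^{}$ transition intervals (of total length at most $N_{k,i}^{}\,m_k$).

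The main step is to control the third term. Since each $x_j^{k,i}\in W_{k,i}\subset P(v_{k,i}^{},\delta_{k,i}^{},n_{k,i}^{})$, one has $\|\sum_{s=0}^{n_{k,i}-1}\varphi(f^s(x_j^{k,i})) - n_{k,i}^{}\,v_{k,i}^{}\|<n_{k,i}^{}\,\delta_{k,i}^{}$, and the shadowing estimate \eqref{defbj0} together with Remark~\ref{defvar} gives $\|\sum_{s=0}^{n_{k,i}-1}[\varphi(f^{\mathrm{e}_j+s}(z))-\varphi(f^s(x_j^{k,i}))]\|\le n_{k,i}^{}\,\mathrm{var}(\varphi,\vep/2^{b_{k-1}+i})$. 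Summing over $j=1,\dots,N_{k,i}^{}$ yields
$$\Big\|\sum_{j=1}^{N_{k,i}^{}}\sum_{s=0}^{n_{k,i}^{}-1}\varphi(f^{\mathrm{e}_j+s}(z))-N_{k,i}^{}n_{k,i}^{}v_{k,i}^{}\Big\|\le N_{k,i}^{}n_{k,i}^{}\bigl(\delta_{k,i}^{}+\mathrm{var}(\varphi,\vep/2^{b_{k-1}+i})\bigr).$$
The other two pieces are bounded trivially by $2l_{k,i-1}^{}\|\varphi\|_\infty$ and $2N_{k,i}^{}m_k\|\varphi\|_\infty$, respectively, where we use $\|v_{k,i}\|\le \|\varphi\|_\infty$ to incorporate the target term $l_{k,i}^{}v_{k,i}^{}$ via the identity $l_{k,i}^{}=l_{k,i-1}^{}+p_{k,i}^0+N_{k,i}^{}n_{k,i}^{}+\sum_{r=1}^{N_{k,i}-1}p_{k,i}^{r}$.

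Combining these estimates and using $l_{k,i}^{}\ge N_{k,i}^{}n_{k,i}^{}$, I would obtain
$$\frac{R_{k,i}^{q}}{l_{k,i}^{}}\le \frac{2l_{k,i-1}^{}\|\varphi\|_\infty}{l_{k,i}^{}}+\frac{2N_{k,i}^{}m_k\|\varphi\|_\infty}{N_{k,i}^{}n_{k,i}^{}}+\delta_{k,i}^{}+\mathrm{var}(\varphi,\vep/2^{b_{k-1}+i}).$$
Each summand vanishes as $k\to\infty$: the first since $l_{k,i-1}^{}$ is dominated by $\sum_{(r,s)\prec(k,i)}N_{r,s}(n_{r,s}+m_r)$ and the third and fourth lines of \eqref{limdosN9} make this negligible against $N_{k,i}^{}n_{k,i}^{}$; the second because $n_{k,i}^{}\gg m_k$ by construction; the third by the choice $\delta_{k,i}^{}\to 0$; and the fourth by uniform continuity of $\varphi$, since $b_{k-1}\to\infty$.

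The only real subtlety is the bookkeeping: making sure that $e_j$, $l_{k,i-1}^{}$, and the total transition length $p_{k,i}^0+\sum_{r=1}^{N_{k,i}-1}p_{k,i}^{r}$ actually add up to $l_{k,i}^{}$, that the shadowing radius $\vep/2^{b_{k-1}+i}$ at the current stage dominates the one inherited from the construction of $z_0$ on the first $l_{k,i-1}^{}$ iterates, and that \eqref{limdosN9} is strong enough to dominate the cumulative contribution from all earlier stages. Once the decomposition is set up, the estimate is routine; the main obstacle is checking that the limits in \eqref{limdosN9} were chosen precisely to kill the $l_{k,i-1}^{}/l_{k,i}^{}$ term, which is the only nontrivial quotient.
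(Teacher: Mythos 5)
Your proposal is correct and follows essentially the same argument as the paper: the identical three-block decomposition (shadowing of $z_0$, transitions, and the concatenated blocks from $W_{k,i}$), the same trivial $\|\varphi\|_\infty$ bounds on the first two pieces, the same $\delta_{k,i}+\operatorname{var}$ estimate on the main blocks, and the same appeal to \eqref{limdosN9} and $n_{k,i}\gg m_k$ to send each quotient to zero. The only cosmetic difference is that the paper first establishes the estimate for the auxiliary point $y$ (eq.~\eqref{est214}) and then transfers it to $z$ via triangle inequality and \eqref{est215}, whereas you handle $z$ directly, which merges two $\operatorname{var}$ terms into one but gives the same conclusion.
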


\begin{proof}
Let $k$ and $i$ be as above, let  $\underline{x_{k,i}}\in (W_{k,i})^{N_{k,i}}$ and $y=y(\underline{x_{k,i}})$.
Recall that
$\|\sum_{i=0}^{n - 1}\varphi(f^{i} (x)) - \sum_{i=0}^{n - 1}\varphi(f^{i} (y))\|\leq n\var(\varphi, c)$
if $d_n (x, y) < c$.
Then, using
$d_{n_{k,i}}(x_{t}^{k,i}, f^{e_t}(y))<\frac{1}{j2^{b_{k-1}}}$ where
$e_t$ is defined in \eqref{defbj} with $t\in\{1, \dots, N_{k,i}\}$
we conclude that
$$
\Big\|\sum_{r=0}^{n_{k,i} - 1}\varphi(f^{r} (x_{t}^{k,i})) - \sum_{r=0}^{n_{k,i} - 1}\varphi(f^{e_t+r} (y))\Big\|\leq n_{k,i}\var(\varphi, \frac{1}{j2^{b_{k-1}}}).
$$
Since $x_{t}^{k,i}\in W_{k,i}$, we have that
\begin{eqnarray}\label{est213}
\Big\|\sum_{r=0}^{n_{k,i} - 1}\varphi(f^{e_t+r} (y)) - n_{k,i}\,v_{k,i}\Big\|\leq n_{k,i}(\var(\varphi, \frac{1}{j2^{b_{k-1}}})+\delta_{k,i}).
\end{eqnarray}
We decompose the time interval $[0, t_{k,i} - 1]$ as follows:
$$
\bigcup_{t~=~1}^{N_{k,i}}[e_{t}, e_{t} + n_{k,i} - 1]
\cup
\bigcup_{t~=~1}^{N_{k,i}-1}[e_{t} + n_{k,i}, e_{t} + n_{k,i} + p_{k,i}^{t} - 1].
$$
On the intervals $[e_t, e_t + n_{k,i} - 1]$ we will use the estimate \eqref{est213}, while in the time intervals
$[e_{t} + n_{k,i}, e_{t} + n_{k,i} + p_{k,i}^{t} - 1]$ we use
$$
\Big\|\sum_{r=0}^{p_{k,i}^{t} - 1}\varphi(f^{e_t+n_{k,i}+r} (y)) - p_{k,i}^{t}\,v_{k,i}\Big\|
\leq m_{k}(\|\varphi\|_{\infty} + \|v_{k,i}\|)\leq 2 \,m_k \|\varphi\|_{\infty}.
$$
Therefore,
\begin{eqnarray}\label{est214}
\Big\|\sum_{r=0}^{t_{k,i} - 1}\varphi(f^{r} (y)) - t_{k,i}\,v_{k,i}\Big\|
\leq N_{k,i}\,n_{k,i}^{} (\var(\varphi, \frac{1}{j2^{b_{k-1}}})+\delta_{k,i})+2(N_{k,i}-1) m_k^{} \|\varphi\|_\infty.
\end{eqnarray}
On the other hand, by definition of $L_{k,i}(q)$ that for every $z\in L_{k,i}(q)$ there exist $z_0\in L_{k,i-1}(q)$ and  $y=y(\underline{x_{k,i}}) \in  X$ such that
\begin{eqnarray}\label{est215}
d_{l_{k,i-1}}(x, z)<\frac{1}{j2^{b_{k-1}}},~~~~~d_{t_{k,i}}(y, f^{l_{k,i-1}+p_{k,i}^0}(z))<\frac{1}{j2^{b_{k-1}}}
\end{eqnarray}
By triangular inequality,
\begin{eqnarray*}
\Big\|\sum_{r=0}^{l_{k,i} - 1}\varphi(f^{r} (z)) - l_{k,i}\,v_{k,i}\Big\|&\leq& \Big\|\sum_{r=0}^{l_{k,i-1} - 1}\varphi(f^{r} (z)) - l_{k,i-1}\,v_{k,i}\Big\|\\
&+&\Big\|\sum_{r= p_{k,i}^0}^{l_{k,i-1} + p_{k,i}^0 - 1}\varphi(f^{r} (z)) - p_{k,i}^0\,v_{k,i}\Big\| \\
&+&\Big\|\sum_{r=l_{k,i-1}+ p_{k,i}^0}^{t_{k,i} - 1}\varphi(f^{r} (z)) - t_{k,i}\,v_{k,i}\Big\|,
\end{eqnarray*}
where the first and second terms are bounded by $2 l_{k,i-1}\|\varphi\|_\infty$ and $2 m_{k}\|\varphi\|_\infty$, respectively.
Inequalities \eqref{est214}-\eqref{est215} imply
\begin{eqnarray*}
&&\Big\|\sum_{r=t_{k,i-1}+ p_{k,i}^0}^{t_{k,i} - 1}\varphi(f^{r} (z)) - t_{k,i}\,v_{k,i}\Big\|\\
&\leq&\Big\|\sum_{r=0}^{t_{k,i} - 1}\varphi(f^{l_{k,i-1} + p_{k,i}^0 + r} (z)) - \sum_{r=0}^{t_{k,i} - 1}\varphi(f^{r} (y))\Big\|+\Big\|\sum_{r=0}^{t_{k,i} - 1}\varphi(f^{r} (y)) - t_{k,i}\,v_{k,i}\Big\|\\
&\leq& t_{k,i}\var(\varphi, \frac{1}{j2^{b_{k-1}}}) + N_{k,i} n_{k,i}^{}\var(\varphi, \frac{1}
{j2^{b_{k-1}}}+\delta_{k,i}) +  2 (N_{k,i} -1)m_{k}\|\varphi\|_\infty
\end{eqnarray*}
and, consequently,
\begin{eqnarray*}\label{est216}
R_{k,i}^q \leq 2 (l_{k,i-1} + N_{k,i} m_{k} )\|\varphi\|_\infty + (t_{k,i} + N_{k,i} n_{k,i})\var(\varphi, \frac{1}{j2^{b_{k-1}}}) + N_{k,i} n_{k,i}^{} \delta_{k,i}.
\end{eqnarray*}
By definition of $N_{k,i}$ in  \eqref{limdosN9} we obtain that $R_{k,i}^q/l_{k,i}\to 0$ as $k\to\infty$, which proves the lemma.
\end{proof}

Given $z = z(z_0, y(\underline{x_{k,i}})) \; \in  L_{k,i+1}^{}(q)$ satisfying \eqref{est29} with $z_0\in L_{k,i}^{}(q)$,
by triangular inequality we have $d_{l_{k,i}}(z_0, x)<\frac{1}{j2^{b_{k-1}-1}}$. Thus,
\begin{eqnarray}\label{est217}
&&\Big\|\sum_{r=0}^{l_{k,i_k} - 1}\varphi(f^{r} (x)) -  l_{k,i_k^{}}\,v_{k,i_k^{}}\Big\|\\
&\leq& \Big\|\sum_{r=0}^{l_{k,i_k}^{} - 1}\varphi(f^{r} (x)) - \sum_{r=0}^{l_{k,i_k^{}}^{} - 1}\varphi(f^{r} (z_0))\Big\| + \Big\|\sum_{r=0}^{l_{k,i_k} - 1}\varphi(f^{r} (z_0)) -  l_{k,i_k^{}}^{}\,v_{k,i_k^{}}\Big\|\nonumber\\
&\leq& l_{k,i_k^{}}^{} \var(\varphi, \frac{1}{j2^{b_{k-1}-1}}) + R_{k,i_k^{}}^q.\nonumber
\end{eqnarray}
Lemma~\ref{lemma21} and the uniform continuity of $\varphi$ ensures that
\begin{eqnarray}\label{snconva}
\Big\|\frac{1}{l_{k,i}^{}}\sum_{r=0}^{l_{k,i}^{} - 1}\varphi(f^{r} (x)) - v\Big\|\leq \Big\|\frac{1}{l_{k,i}^{}}\sum_{r=0}^{l_{k,i}^{} - 1}\varphi(f^{r} (x)) - v_{k,i_k}^{}\Big\| + \|v_{k,i_k}^{} - v\| \to 0
\end{eqnarray}
as $k\to\infty$ and, consequently, $v\in \mathcal V_\varphi (x)$. This proves that $\Delta \subseteq \mathcal V_\varphi (x)$.

Altogether we conclude that $X_\Delta$ is a Baire residual subset of $X$, and finish
the proof of Proposition \ref{prop:irreg} and Theorem~\ref{thm:A}.

\subsection{Full topological pressure and metric mean dimension}\label{subsec:htopmmd}

In this section we prove Theorem \ref{thm:B}. Assume that $f$ is a continuous map with the gluing orbit property on a compact metric
space $X$ and that $\varphi : X \rightarrow \mathbb R^d$ is continuous such that $X_{\varphi, f}\neq\emptyset$.  The proofs of
(i) $\displaystyle  \underline{\text{mdim}}_{X_{\varphi, f}}^{} (f) = \underline{\text{mdim} (f)}$, (ii)
$\displaystyle  \overline{\text{mdim}}_{X_{\varphi, f}}^{} (f) = \overline{\text{mdim} (f)}$ and (iii) 
$h_{top}^{}(f) = h_{X_{\varphi, f}^{}}^{}(f)$ will be a consequence from the fact that
$h_{X_{\varphi, f}^{}}^{}(f,\vep)=\htop(f,\vep)$ for every $\vep>0$. Fix $\vep>0$ and let $m(\vep)$ be given by the gluing orbit property.

\subsubsection{Measures with large entropy and distinct rotation vectors }
The proof explores the construction of an exponentially large (with exponential rate close to topological entropy) number of points
that oscillate between distinct vectors in $\mathbb R^d$.
We use some auxiliary results. We say that a observable $ \varphi: X \to \mathbb R^d$ is  \emph{cohomologous to a vector}  if there exists $v \in \mathbb R^d$ and
a continuous function $\chi: X \to \mathbb R^d$ so that  $\varphi =v +  \chi-   \chi\circ f$, and denote by $Cob$ the set of all such
observables and by $\overline{Cob}$ its closure in the $C^{0}$-topology.
\begin{lemma}\label{obsat}
Assume that $f$ has the gluing orbit property. The following are equivalent:
\begin{itemize}
\item[(i)] $X_{\varphi,f} \neq \emptyset$;
\item[(ii)] there are $\mu_1,\mu_2\in \mathcal M_e(f)$ such that $ \int \varphi\, d\mu_1\neq \int \varphi\, d\mu_2$; 
\item[(iii)] there exist periodic points $p_1,p_2$ of period $k_1, k_2$ respectively such that
	$$\frac1{k_1} \sum_{j=0}^{k_1-1} \varphi(f^j(p_1)) \neq \frac1{k_2} \sum_{j=0}^{k_2-1} \varphi(f^j(p_2));$$
\item[(iv)] $\varphi \notin  \overline{Cob}$;
\item[(v)] $\frac1n \sum_{j=0}^{n-1} \varphi \circ f^j $ does not converge uniformly to a constant.
\end{itemize}
\end{lemma}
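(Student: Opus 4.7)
The plan is to establish the equivalence via the cycle $(i)\Rightarrow(ii)\Rightarrow(iii)\Rightarrow(iv)\Rightarrow(v)\Rightarrow(ii)$, together with a direct arrow $(ii)\Rightarrow(i)$ closing the loop back. The implication $(i)\Rightarrow(ii)$ follows the classical empirical-measure argument: if $x\in X_{\varphi,f}$, then the weak$^*$-accumulation set of $\mu_n^x:=\frac1n\sum_{j=0}^{n-1}\delta_{f^j(x)}$ is a compact connected subset of $\mathcal M_{inv}(f)$, and its image under $\mu\mapsto\int\varphi\,d\mu$ is exactly the accumulation set of $\frac1n S_n\varphi(x)$, which by hypothesis is not a singleton; I extract two invariant measures with distinct integrals and then pass to ergodic components using the ergodic decomposition theorem.

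For $(ii)\Rightarrow(iii)$ I would pick $\mu_i$-generic points $x_i$ so that $\frac1n S_n\varphi(x_i)\to v_i:=\int\varphi\,d\mu_i$, and then use the (periodic) gluing orbit property to shadow very long orbit segments of $x_i$ by periodic points $p_i$ of periods $k_i$. Uniform continuity of $\varphi$ transfers this to the rotations, giving $\frac1{k_i}S_{k_i}\varphi(p_i)\approx v_i$, which are distinct once the shadowing windows are long enough. The step $(iii)\Rightarrow(iv)$ I would handle by contrapositive: if $\varphi\in\overline{Cob}$, then for every $\eta>0$ we may write $\|\varphi - v_\eta - \chi_\eta + \chi_\eta\circ f\|_\infty<\eta$, and since the coboundary piece telescopes around any periodic orbit $p$ of period $k$, one gets $\frac{1}{k}S_k\varphi(p) = v_\eta + O(\eta)$; thus any two periodic rotations are within $2\eta$, and letting $\eta\to 0$ forces them to coincide, contradicting (iii).

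For $(iv)\Rightarrow(v)$, also by contrapositive, assume $\frac1n S_n\varphi \to v$ uniformly and set $\chi_N:=\frac1N\sum_{n=1}^{N} S_n(\varphi-v)$; a direct manipulation gives $\chi_N\circ f - \chi_N = -(\varphi-v) + \frac1N\bigl(S_{N+1}(\varphi-v)-(\varphi-v)\bigr)$, whose last term tends uniformly to $0$, so $v+\chi_N-\chi_N\circ f\to\varphi$ uniformly and hence $\varphi\in\overline{Cob}$. For $(v)\Rightarrow(ii)$ I argue contrapositively again: if every ergodic (hence, by ergodic decomposition, every invariant) measure integrates $\varphi$ to the same $v$, and if $\frac1n S_n\varphi$ failed to converge uniformly to $v$, I could extract $x_n\in X$ and $k_n\to\infty$ with $\|\frac{1}{k_n}S_{k_n}\varphi(x_n)-v\|\ge\epsilon$; the corresponding empirical measures then weak$^*$-accumulate on an invariant measure with integral $\epsilon$-far from $v$, a contradiction.

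Finally, $(ii)\Rightarrow(i)$ is a standard concatenation: using Birkhoff-typical points $x_i$ for $\mu_i$ and the gluing orbit property, I inductively build $y\in X$ whose orbit shadows alternating blocks of the orbits of $x_1$ and $x_2$, each new block chosen so long that it dominates all previous Birkhoff averages; then $\frac1n S_n\varphi(y)$ oscillates between neighborhoods of $v_1$ and $v_2$ and cannot converge. The main obstacle I anticipate lies in $(ii)\Rightarrow(iii)$: producing genuine periodic points (not merely ergodic measures) near the $\mu_i$ requires closing up the gluing pseudo-orbit, i.e.\ the \emph{periodic} gluing orbit property; the cleanest way around this is to invoke Lemma~\ref{lemma:per-measures} to approximate each $\mu_i$ in the weak$^*$ topology by a periodic measure and then read off the distinct periodic rotations from the continuous map $\mu\mapsto\int\varphi\,d\mu$.
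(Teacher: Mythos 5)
Your proposal is correct and follows essentially the same route as the paper: the key ingredients in both are the Ces\`aro-averaging trick for $(iv)\Rightarrow(v)$, empirical measures plus the ergodic decomposition for $(v)\Rightarrow(ii)$, the oscillating-block construction from the proof of Theorem~\ref{thm:A} for $(ii)\Rightarrow(i)$, and the density of periodic measures (Lemma~\ref{lemma:per-measures}) to pass from invariant measures to periodic orbits. The only cosmetic differences are that you split the paper's single implication $(i)\Rightarrow(iii)$ into the two arrows $(i)\Rightarrow(ii)\Rightarrow(iii)$, and that for $(iii)\Rightarrow(iv)$ you telescope a coboundary around a periodic orbit rather than noting, as the paper does, that $\int\varphi\,d\mu$ is constant on $\mathcal M_e(f)$ whenever $\varphi\in\overline{Cob}$ --- two phrasings of the same observation.
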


\begin{proof}
Although this is similar to \cite[Lemma~1.9]{TMP} we include it for completeness.

$(iii)\Rightarrow (iv)$:
If $ \varphi\in \overline{Cob}$, then there is $ \{\varphi_k\}$ in~$Cob$ such that
$ \varphi = \lim_{k \to \infty} \varphi_k$. In particular there exists $v_k\in \mathbb R^d$  and $\chi_k$ continuous  so that
\begin{equation}\label{eq:cohom}
\frac1n \sum_{j=0}^{n-1}\varphi_k(f^j(x))= \frac{\chi_k \circ f^{n}(x)}{n} - \frac{\chi_k(x)}{n} +v_k
\end{equation}
for every $x\in X$. By Birkhoff's ergodic theorem and dominated convergence theorem  $\mathcal M_e(f) \ni\mu\mapsto \int \varphi d\mu$ is constant, which contradicts (iii).

$(iv)\Rightarrow (v)$:
If $ \varphi\not\in \overline{Cob}$ then the sequence $\frac1n \sum_{j=0}^{n-1} \varphi\circ f^j$ is not
uniformly convergent to a vector $v$. Indeed, otherwise
the sequence of continuous function $(h_n)_n$ given by $ h_n = \frac{1}{n}\sum_{i=0}^{n -1} \, (n-i)\varphi\circ f^{i-1}$
satisfy the cohomological equation
$$
h_n^{} (x) -h_n^{} (f(x)) = \varphi(x) - \frac1n \sum_{j=0}^{n-1} \varphi(f^j(x)), ~~\forall~x\in X
$$
and so
$
\varphi(x) = \lim_{n\to\infty} [h_n^{} (x) -h_n^{} (f(x)) +  \frac1n \sum_{j=0}^{n-1} \varphi(f^j(x))] \in \overline{Cob},
$
leading to a contradiction.

$(v)\Rightarrow (ii)$:
Let $\mu$ be an $f$-invariant probability measure and suppose that $\frac1n \sum_{j=0}^{n-1} \varphi(f^j(x))$ does not converge uniformly to $\int \varphi d\mu$. There exists $\vep > 0$ so that for every $k\geq 1$ there are $n_k\geq k$ and $x_k\in X$ for which
$\|\frac1{n_k^{}} \sum_{j=0}^{n_k^{}-1} \varphi(f^j(x_k)) - \int \varphi d\mu\|\geq \vep$. Consider $\nu_k := \frac{1}{n_k}\sum_{j = 0}^{n_k - 1} \delta_{f^j (x_k)}$ and let $\nu$ be a weak$^*$ accumulation point of the sequence $(\nu_k)_k$. 
Note that $\nu$ is $f$-invariant. Choose $k$ such that $\|\frac1{n_k} \sum_{j=0}^{{n_k}-1} \varphi(f^j(x_{k})) - \int \varphi d\nu\|\leq \vep/2$,  so
\begin{eqnarray*}
\big\|\int \varphi d\mu - \int \varphi d\nu\big\| &\geq&\big\|\int \varphi d\mu - \frac1{n_k} \sum_{j=0}^{{n_k}-1} \varphi(f^j(x_{k}))\big\|\\
&-&\big\|\frac1{n_k} \sum_{j=0}^{{n_k}-1} \varphi(f^j(x_{k})) - \int \varphi d\nu\big\|\geq \vep/2.
\end{eqnarray*}
The conclusion follows from the ergodic decomposition theorem.

$(ii)\Rightarrow (i)$:
The construction in the proof of Theorem~\ref{thm:A} ensures that if there exist $f$-invariant measures $\mu_1,\mu_2$
so that $\int \varphi ~d \mu_1 \neq \int \varphi ~d \mu_2 $ then
 $X_{\varphi, f}\neq\emptyset$.

$(i)\Rightarrow (iii)$:
If the limit $\lim_{n\to\infty} \frac1n \sum_{j=0}^{n-1} \varphi(f^j(x))$ does not exist for some $x\in X$
then the empirical measures $( \frac1n \sum_{j=0}^{n-1} \delta_{f^j(x)})_{n\ge 1}$ accumulate on $f$-invariant probability measures $\mu_1, \mu_2$ so that $\int \varphi\, d\mu_1 \neq \int \varphi\, d\mu_2$. Now, the result follows as a simple consequence of the weak$^*$ convergence and the fact that periodic measures are dense in the space of $f$-invariant probability measures 
(cf. Lemma~\ref{lemma:per-measures}).
\end{proof}

\begin{lemma}\label{exist-medmu}
Given $\psi\in C^0(X,\mathbb R)$and  $\gamma > 0$ there are  $\mu_1,\mu_2^{}\in \mathcal  M_1(f)$ so that $\mu_1$ is ergodic, $\int \varphi d\mu_1^{} \neq \int \varphi d\mu_2^{}$ and $h_{\mu_i}^{} (f) + \int \psi d\mu_i> P_{top}^{} (f, \psi)- \gamma$,  for $i=1, 2$.
\end{lemma}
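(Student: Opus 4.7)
The strategy is to combine a near-maximizing ergodic measure for the topological pressure with a small convex perturbation by an ergodic measure obtained from Lemma~\ref{obsat}, exploiting the affinity of the metric entropy.

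First, by the variational principle $P_{top}(f,\psi)=\sup\{h_\mu(f)+\int\psi\,d\mu:\mu\in\mathcal M_{inv}(f)\}$, and by the ergodic decomposition theorem together with the affinity of both $\mu\mapsto h_\mu(f)$ and $\mu\mapsto\int\psi\,d\mu$, this supremum is attained on ergodic measures. Hence there exists $\mu^{*}\in\mathcal M_e(f)$ with
$$
h_{\mu^{*}}(f)+\int\psi\,d\mu^{*}\;>\;P_{top}(f,\psi)-\gamma/2.
$$
Next, since $X_{\varphi,f}\neq\emptyset$, Lemma~\ref{obsat} provides ergodic measures $\nu_{1},\nu_{2}\in\mathcal M_e(f)$ with $\int\varphi\,d\nu_{1}\neq\int\varphi\,d\nu_{2}$. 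At least one of them must satisfy $\int\varphi\,d\nu_{i}\neq\int\varphi\,d\mu^{*}$; after relabelling we may assume this holds for $\nu_{1}$.

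Set $\mu_{1}:=\mu^{*}$, which is ergodic and already satisfies the pressure lower bound. For the second measure, consider the convex combination $\mu_{2}:=(1-t)\mu^{*}+t\,\nu_{1}\in\mathcal M_{inv}(f)$, for $t\in(0,1)$ to be fixed. Linearity of integration gives
$$
\int\varphi\,d\mu_{2}-\int\varphi\,d\mu_{1}=t\Big(\int\varphi\,d\nu_{1}-\int\varphi\,d\mu^{*}\Big)\neq 0,
$$
so $\int\varphi\,d\mu_{1}\neq\int\varphi\,d\mu_{2}$. For the pressure of $\mu_{2}$, the affinity of $h_\cdot(f)$ and linearity of $\int\psi$ yield
$$
h_{\mu_{2}}(f)+\int\psi\,d\mu_{2}
=(1-t)\Big(h_{\mu^{*}}(f)+\int\psi\,d\mu^{*}\Big)+t\Big(h_{\nu_{1}}(f)+\int\psi\,d\nu_{1}\Big).
$$
Using $h_{\nu_{1}}(f)\geq 0$ and $\int\psi\,d\nu_{1}\geq-\|\psi\|_{\infty}$, the right-hand side is at least $(1-t)(P_{top}(f,\psi)-\gamma/2)-t\|\psi\|_{\infty}$. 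Choosing $t\in(0,1)$ small enough so that
$$
t\,\big(P_{top}(f,\psi)-\gamma/2+\|\psi\|_{\infty}\big)\;<\;\gamma/2
$$
ensures $h_{\mu_{2}}(f)+\int\psi\,d\mu_{2}>P_{top}(f,\psi)-\gamma$, which completes the construction.

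The only subtlety is the tacit assumption $P_{top}(f,\psi)<\infty$, without which the choice of $t$ in the last step is not uniform; in the case $P_{top}(f,\psi)=\infty$ the statement is interpreted as producing measures with arbitrarily large $h_{\mu_{i}}(f)+\int\psi\,d\mu_{i}$, and the same argument works by first selecting $\mu^{*}$ ergodic with $h_{\mu^{*}}(f)+\int\psi\,d\mu^{*}$ larger than any prescribed threshold (using that the set of ergodic measures with finite entropy is nonempty and that the supremum in the variational principle is ergodic). No further obstacle arises.
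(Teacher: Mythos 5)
Your proof is correct and follows essentially the same route as the paper: pick a near-maximizing ergodic measure $\mu^{*}$ via the variational principle, use Lemma~\ref{obsat} to find an ergodic $\nu_1$ with a different $\varphi$-average, and form $\mu_2$ as a convex combination close to $\mu^{*}$, invoking the affinity of entropy. The only differences are cosmetic (the paper parametrizes with $t$ close to $1$ rather than small) and that you spell out the ergodic-decomposition step and the $P_{top}(f,\psi)=\infty$ subtlety, which the paper leaves implicit here and addresses separately at the end of the proof of Theorem~\ref{thm:B}.
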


\begin{proof}
By the variational principle  there exists an ergodic $\mu_1^{}\in  \mathcal M_1(f) $ so that  $h_{\mu_1}^{} (f) + \int \psi d\mu_1> P_{top}^{} (f, \psi)- \gamma$. As $X_{\varphi, f}^{}\neq\emptyset$
there is $\nu\in \mathcal M_1(f)$ satisfying  $\int \varphi d\mu_1^{} \neq \int \varphi d\nu$ (recall Lemma~\ref{obsat}). Consider the family of measures
\begin{eqnarray}\label{defdemu2}
\mu_2^{t} = t\mu_1^{} + (1-t)\nu, \quad t\in (0, 1)
\end{eqnarray}
and observe that, by convexity,
$
h_{\mu_2^t}^{}(f)  + \int \psi d\mu_2^t
	> P_{top}^{} (f, \psi)- \gamma,
$
provided that the constant $t=t(\gamma,\psi)\in (0,1)$ is sufficiently close to one. 
Note that $t\to 1$ as $\gamma \to 0$ and that the probability measure $\mu_2:=\mu_2^t$ satisfies the requirements of the lemma.
\end{proof}

Although the previously defined measures $\mu_1, \mu_2$ depend on the potential $\psi \in C^0(X,\mathbb R)$ 
and $t\in (0,1)$ close to one, we shall omit its dependence
for notational simplicity when possible.

\subsubsection{Exponential growth of points with averages close to $\int \varphi\, d\mu_i$, $i=1,2$}

Take $\gamma\in (0,1)$ arbitrary and take $t\in (0,1)$ and the probability measures $\mu_1^{}, \nu$ and $\mu_2$ given by Lemma \ref{exist-medmu}. Consider the sequence $\{\zeta_k^{}\}_k$ of real numbers
\begin{eqnarray} \label{defzetak}
\zeta_k^{} = \max \Big\{\frac{\|\int \varphi d\mu_1^{} - \int \varphi d\nu\|}{2^k}, \var(\varphi, \frac\vep{2^k})\Big\},
\end{eqnarray}
which tend to zero as $k\to\infty$, and take $m_k=m(\frac{\vep}{2^k})$. By Birkhoff's ergodic theorem one can choose $n_k^1 \gg m_k$ so that $\mu_1(Y_{k, 1}) \ge 1-\gamma$, where
\begin{eqnarray}
Y_{k, 1}= \Big\{x\in X : \Big\|\frac{1}{n}\sum_{j = 0}^{n-1}\varphi(f^{j}(x))- \int \varphi d\mu_{1}\Big\|<\zeta_k^{} \; \text{ for every } n\ge n_k^1\Big\}.
\end{eqnarray}
We make the previous choice in such a way that 
\begin{equation}\label{eqmko}
m_k / n_k^1 \to 0 \quad \text{as $k\to\infty$}.
\end{equation}
The following lemma will be instrumental.

\begin{lemma}\label{constnkesk} There exists $\vep_0^{} > 0$ so that for any $0< \vep < \vep_0^{} $, there is a collection $\{S_k^1\}_k$  so that every $S_k^1$ is a $(n_k^1, 6\vep)$ separated subset of
$Y_{k, 1}$ and $M_k^1:=\sum_{x\in S_k^1}  \exp (\sum_{i=0}^{n^1_k -1} \psi(f^i (x)))$ satisfies $M_k^1\geq \exp\; (n_k^1(P_{top}^{}(f,\psi,\vep) - 4\gamma))$.
\end{lemma}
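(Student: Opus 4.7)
The plan is to invoke Katok's formula for topological pressure (Proposition~\ref{TMP}) applied to the ergodic measure $\mu_1$ given by Lemma~\ref{exist-medmu}, and then take $S_k^1$ to be a maximal $(n_k^1,6\vep)$-separated subset of $Y_{k,1}$. The key observation is that any maximal separated set is automatically a spanning set, so $S_k^1$ qualifies as a competitor in the infimum defining $N^{\mu_1}(\psi,\gamma,6\vep,n_k^1)$, and Katok's formula then delivers the claimed lower bound.

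Concretely, I would first combine the inequality $h_{\mu_1}(f) + \int \psi\, d\mu_1 > P_{\topp}(f,\psi) - \gamma$ supplied by Lemma~\ref{exist-medmu} with the monotonicity $P_{\topp}(f,\psi,\vep) \le P_{\topp}(f,\psi)$ and Proposition~\ref{TMP} to choose $\vep_0>0$ so small that for every $0<\vep<\vep_0$
$$
\liminf_{n\to\infty} \frac{1}{n} \log N^{\mu_1}(\psi,\gamma, 6\vep, n) \;\ge\; h_{\mu_1}(f) + \int \psi\, d\mu_1 - \gamma \;\ge\; P_{\topp}(f,\psi,\vep) - 2\gamma.
$$
The integer $n_k^1$ is being chosen in the construction so that $\mu_1(Y_{k,1})\ge 1-\gamma$ and $m_k/n_k^1 \to 0$; I would simply impose the additional requirement that $n_k^1$ be large enough that
$$
\tfrac{1}{n_k^1} \log N^{\mu_1}(\psi,\gamma, 6\vep, n_k^1) \;>\; P_{\topp}(f,\psi,\vep) - 3\gamma,
$$
which is compatible with all the earlier demands.

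Next, I would let $S_k^1 \subset Y_{k,1}$ be any maximal $(n_k^1, 6\vep)$-separated subset. By maximality, every point $y\in Y_{k,1}\setminus S_k^1$ satisfies $d_{n_k^1}(x,y)\le 6\vep$ for some $x\in S_k^1$, so $S_k^1$ is $(n_k^1, 6\vep)$-spanning for $Y_{k,1}$. Since $\mu_1(Y_{k,1})\ge 1-\gamma$, this makes $S_k^1$ an admissible competitor in the infimum defining $N^{\mu_1}(\psi,\gamma, 6\vep, n_k^1)$, and therefore
$$
M_k^1 \;=\; \sum_{x\in S_k^1} e^{S_{n_k^1}\psi(x)} \;\ge\; N^{\mu_1}(\psi,\gamma, 6\vep, n_k^1) \;\ge\; \exp\bigl( n_k^1\,(P_{\topp}(f,\psi,\vep) - 3\gamma)\bigr),
$$
which is even stronger than the $-4\gamma$ bound in the lemma. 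The extra slack of $\gamma$ is left for the later needs of the argument.

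The main delicate point is merely bookkeeping: keeping track of the three distinct $\vep$-scales (the $6\vep$ used in the separation of $S_k^1$, the $\vep$ appearing in $P_{\topp}(f,\psi,\vep)$, and whatever scale is used inside Katok's formula) and verifying that one can pick $\vep_0$ uniformly so that Katok's formula applied at scale $6\vep$ still dominates $P_{\topp}(f,\psi,\vep) - 3\gamma$; this is immediate from $\vep \mapsto P_{\topp}(f,\psi,\vep)$ being non-increasing together with the fact that Katok's limit as $\vep\to 0$ equals $h_{\mu_1}(f)+\int\psi\,d\mu_1$ independently of the scale at which it is read. No further analytic input is needed beyond Proposition~\ref{TMP}.
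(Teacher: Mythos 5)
Your proof is correct and matches the paper's intent: the paper's proof is simply the one-line remark that the lemma is a ``standard consequence of Proposition~\ref{TMP}'', and you have spelled out exactly that standard argument — invoke Katok's pressure formula for the ergodic measure $\mu_1$ from Lemma~\ref{exist-medmu}, use monotonicity of $\vep\mapsto P_{\topp}(f,\psi,\vep)$ and of the liminf in Katok's formula to choose $\vep_0$, and observe that a maximal $(n_k^1,6\vep)$-separated subset of $Y_{k,1}$ is $(n_k^1,6\vep)$-spanning and hence competes in the infimum defining $N^{\mu_1}$. The bookkeeping of the scales and the extra $\gamma$ of slack are both handled cleanly.
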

\begin{proof}
The proof is a standard consequence of Proposition~\ref{TMP}.
\end{proof}

For any $k\ge 1$, we now construct large sets of points $S_k^2$ with time averages close to $\int\varphi\, d\mu_2$ at large instants $n_k^2 \ge 1$ (to be defined below). First, as $\nu$ is ergodic, there exists $\widetilde{x_{k}^{}}\in X $ and $\tilde n_k\ge 1$ so that
$
\big\|\frac{1}{\widetilde{n}_k }\sum_{j = 0}^{\widetilde{n}_k -1}\varphi(f^{j}(\widetilde{x_{k}^{}}))- \int \varphi d\nu\big\|<\zeta_k^{}.
$
Choose two sequences of integers $r_k^{},s_k^{} \ge 1$ satisfying
\begin{eqnarray}\label{relrknk}
~~\frac{r_k^{} n_k^1}{s_k^{} \widetilde{n}_k^{}} \to~\frac{t}{1-t} \qquad \text{as}~~ k\to \infty,
\end{eqnarray}
where $t\in (0,1)$ is as above.

For any fixed $k\geq 1$, any string $(x_1^k, \dots , x_{r_k}^{k})\in (S_k^1)^{r_k^{}}$ and $s_k^{}$ copies of the point $\widetilde{x_k}$, by the gluing orbit property there exists $y=y(x_1^k, \dots , x_{r_k}^{k}) \in X$ satisfying
\begin{eqnarray*}
d_{n_k^{1}}  (f^{a_i^{}}(y), x_{i}^{k} ) < \vep,
	\qquad\text{and}\qquad
	d_{\widetilde{n}_k^{}} (f^{b_j^{}}(y), \widetilde{x_{k}} )<\vep
\end{eqnarray*}
for every $i=1, 2, \dots , r_k^{}$ and $j=1, 2, \dots , s_k^{}$, where
\begin{equation*}
a_i^{} =\begin{cases}
0 & \text{if $i=1$}\\
(i - 1)n_{k}^{1}  + \sum_{r=1}^{i-1} p_{k,r}^{} & \text{if $i= 2, \dots , r_k^{} $}
\end{cases}
\end{equation*}
\noindent and
\begin{equation*}
b_j^{} =\begin{cases}
 a_{r_k^{}}^{} + p_{k,r_k^{}}^{} & \text{if $j=1$}\\
(j - 1)\widetilde{n}_{k}^{} + \sum_{r=0}^{j-1} p_{k,r_k^{}+r}^{}+a_{r_k^{}} & \text{if $j= 2, \dots , s_k^{} $}
\end{cases}
\end{equation*}
where $0\leq p_{k,r}^{}\leq m(\vep)$ are the transition time functions defined similarly as in the proof of Theorem \ref{thm:A}. We define the auxiliary set $\widehat{S_k^2}$ as the set of points $y$ obtained by the previous process.

\begin{remark}
For every point $x\in~\widehat{S_k^2}$ we associate the size
$$
n_k^2 (\cdot) := r_k^{} n_k^{1} + s_k^{} \widetilde{n}_k^{} +  \sum_{r=1}^{r_k^{}+s_k^{}-1} p_{k,r}^{}(\cdot),
$$
of the finite piece of orbit,  which is a function of $(x_1^k, \dots , x_{r_k}^k, \widetilde{x_k}, s_k)$. In strong contrast with the case when $f$ satisfies the specification property, at this moment we can not claim that the cardinality of $\widehat{S_k^2}$ is large. Indeed, since  $n_k^2$ varies with the elements in $\widehat{S_k^2}$ then the $(n_k^1,4\vep)$-separability of the points in $S_k^1$ is not sufficient to ensure the shadowing point map $(S_k^1)^{r_k} \times \{\widetilde{ x_k}\}^{s_k} \to X$ to be injective. This issue is solved by Lemma~\ref{lemaestimSK2}.
\end{remark}

Now, for any $\underline{j}=(j_1, j_2, \dots , j_{r_k+s_k-1})\in \mathbb Z_{+}^{r_k+s_k-1}$ so that
$0\leq j_i\leq m(\vep)+1$  define the set
$
S_k^2(\underline{j}):=\{x\in\widehat{S_k^2}~: ~p_{k, 1}=j_1,~p_{k, 2}=j_2, \dots , p_{k, r_k+s_k-1}=j_{r_k+s_k-1}\}.
$
The size of the finite orbit of all points in $S_k^2 (\underline{j^{}})$ is constant and, by some abuse of notation,
we will denote it by
\begin{eqnarray}\label{defdenk2}
n_k^2  (\underline{j^{}}) := r_k^{} n_k^{1} + s_k^{} \widetilde{n}_k^{} +  \sum_{r=1}^{r_k^{}+s_k^{}-1} j_{r}^{}.
\end{eqnarray}
It is not hard to check that ~\eqref{relrknk} implies
\begin{eqnarray}\label{zeronk2}
\frac{r_k^{} n_k^1}{r_k^{} n_k^1+s_k^{} \widetilde{n}_k^{}}
	\to t\quad\text{and, consequently, }\quad
	\frac{r_k^{} n_k^1}{n_k^2  (\underline{j^{}})} \to t
\end{eqnarray}
as $k\to\infty$.  Moreover,
\begin{eqnarray}\label{skrk}
\frac{r_k^{} +s_k}{n_k^2} \le \frac{1}{n_k^1} + \frac{1}{\widetilde{n}_k}
	\to 0 \quad \text{as $k\to\infty$.}
\end{eqnarray}
The next lemma says that one can choose a large set $S_k^2$ of points whose $n_k^2$-time average is close to the one determined
by $\mu_2$. More precisely:

\begin{lemma} \label{lemaestimSK2}
For every large $k\geq 1$ there exists
$
\underline{j_k^{}}= (j_1^{k}, \dots , j_{r_k^{k}+s_k^{}-1}^{k})$  so that if $S_k^2:=S_k^2 (\underline{j_k^{}})
$
and
$n_k^2=n_k^2  (\underline{j_k^{}})$ then the following hold:
\begin{enumerate}
\item $S_k^2$ is $(n_k^2, 4\vep)$-separated,
\item  if $M_k^2:=\sum_{x\in S_k^2} \exp \{\sum_{i=0}^{n_k^2 -1} \psi(f^i (x))\}$ then
    $$
    M_k^2 \geq \exp\; (n_k^2 [ t P_{top}^{}(f,\psi,\vep) - \var(\psi,\vep) - 6\gamma ] ),
    $$
	\item there exists a sequence $(a_k)_{k\ge 1}$ converging to zero so that
	$$
	 \big\|\frac{1}{n_k^2}\sum_{j = 0}^{n_k^2-1}\varphi(f^{j}(y))- \int \varphi d\mu_2\big\|
	\le \var(\varphi, \vep) + a_k \|\varphi\|_\infty,
	$$
	for every $k\ge 1$ and every $y\in S_k^2$.
\end{enumerate}
\end{lemma}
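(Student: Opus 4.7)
The plan is to apply a pigeonhole argument on the transition-time vector $\underline{j}\in \{0,\dots,m(\vep)+1\}^{r_k+s_k-1}$ and then harvest the separation and exponential mass of $S_k^1$ coming from Lemma~\ref{constnkesk}. Writing $\widehat{S_k^2} = \bigsqcup_{\underline{j}} S_k^2(\underline{j})$, the total number of cells is at most $(m(\vep)+2)^{r_k+s_k-1}$; by \eqref{skrk} one has $\frac{1}{n_k^2}(r_k+s_k-1)\log(m(\vep)+2)\to 0$, so this count is negligible on the exponential scale. I choose $\underline{j_k}$ as the vector maximizing $\sum_{y\in S_k^2(\underline{j})}\exp(S_{n_k^2}\psi(y))$.

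For item (1), for a fixed $\underline{j_k}$ the partial-orbit gluing assigns to each $(x_1^k,\dots,x_{r_k}^k)\in (S_k^1)^{r_k}$ a point $y\in S_k^2(\underline{j_k})$, and the time marks $a_i$ are the same for all such $y$. If two input strings differ at some index $i$, then the $(n_k^1,6\vep)$-separation of $S_k^1$ together with the shadowing bound $d_{n_k^1}(f^{a_i}(y),x_i^k)<\vep$ yields $d_{n_k^2}(y,y')\ge d_{n_k^1}(f^{a_i}(y),f^{a_i}(y'))>4\vep$ by the triangle inequality, so the map is injective and its image is $(n_k^2,4\vep)$-separated.

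For item (2), shadowing gives the block-by-block comparison $|S_{n_k^1}\psi(f^{a_i}(y))-S_{n_k^1}\psi(x_i^k)|\le n_k^1 \var(\psi,\vep)$ on the $r_k$ blocks shadowing $x_i^k$, and analogously on the $s_k$ blocks shadowing $\widetilde{x_k}$, with at most $(r_k+s_k-1)m(\vep)\|\psi\|_\infty$ loss on the gap intervals. Summing $\exp$ over strings and using $(M_k^1)^{r_k}\ge \exp(r_k n_k^1(P_{top}(f,\psi,\vep)-4\gamma))$ from Lemma~\ref{constnkesk} yields
\[
M_k^2(\underline{j_k}) \ge (m(\vep)+2)^{-(r_k+s_k-1)}(M_k^1)^{r_k} \exp\bigl(-s_k\widetilde{n}_k\|\psi\|_\infty - n_k^2\var(\psi,\vep) - (r_k+s_k-1)m(\vep)\|\psi\|_\infty\bigr).
\]
Dividing by $n_k^2$ and applying \eqref{zeronk2}, \eqref{skrk}, \eqref{eqmko} reduces the exponent to $tP_{top}(f,\psi,\vep)-\var(\psi,\vep)-4t\gamma-(1-t)\|\psi\|_\infty + o(1)$. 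Item (3) follows from an analogous block-by-block comparison for $\varphi$, using the $\zeta_k$-closeness of $S_{n_k^1}\varphi(x_i^k)$ to $n_k^1\int\varphi \,d\mu_1$ (since $x_i^k\in Y_{k,1}$) and the corresponding bound for $\widetilde{x_k}$, so that $\frac{1}{n_k^2}S_{n_k^2}\varphi(y)$ is within $\var(\varphi,\vep)+a_k\|\varphi\|_\infty$ of $\frac{r_k n_k^1}{n_k^2}\int\varphi\,d\mu_1+\frac{s_k\widetilde{n}_k}{n_k^2}\int\varphi\,d\nu$, and \eqref{zeronk2} identifies the limit with $\int\varphi\,d\mu_2$.

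The main obstacle is absorbing the $\widetilde{x_k}$-block contribution in item (2): because we only have ergodic-type control for $\varphi$ (not for $\psi$) along $\widetilde{x_k}$, the term $-s_k\widetilde{n}_k\|\psi\|_\infty$ contributes $-(1-t)\|\psi\|_\infty$ to the exponent. To absorb this into the $6\gamma$ slack I would strengthen the choice in Lemma~\ref{exist-medmu} and fix $t=t(\gamma,\psi)\in(0,1)$ so close to $1$ that simultaneously $(1-t)\|\psi\|_\infty<2\gamma$ and $h_{\mu_2^t}(f)+\int\psi\,d\mu_2^t>P_{top}(f,\psi)-\gamma$; both conditions are compatible because the second holds on an open set of $t$ near $1$. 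With such a $t$ the bound reduces to $tP_{top}(f,\psi,\vep)-\var(\psi,\vep)-6\gamma+o(1)$, finishing the proof for $k$ large.
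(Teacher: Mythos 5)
Your proof follows the same route as the paper: for item (1), a triangle-inequality argument off the $(n_k^1,6\vep)$-separation of $S_k^1$ and the $\vep$-shadowing bound; for item (3), a block-by-block $\var(\varphi,\vep)$- and $\zeta_k$-comparison together with the convergence in \eqref{zeronk2}--\eqref{skrk}; for item (2), first bounding the mass of $\widehat{S_k^2}$ using $(M_k^1)^{r_k}$ and then pigeonholing over the transition-time vectors $\underline{j}$, whose number $(m(\vep)+2)^{r_k+s_k-1}$ is subexponential by \eqref{skrk}. This matches the paper's proof.

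The one place where you go beyond the paper is the observation about absorbing the $\widetilde{x}_k$-block cost $\frac{s_k\widetilde{n}_k}{n_k^2}\|\psi\|_\infty\to(1-t)\|\psi\|_\infty$ into the $\gamma$-slack. The paper's displayed inequality passing to $\exp(n_k^2[tP_{top}(f,\psi,\vep)-\var(\psi,\vep)-5\gamma])$ indeed requires $(1-t)\|\psi\|_\infty$ to be small relative to $\gamma$; the paper signals this implicitly by writing $t=t(\gamma,\psi)$ in Lemma~\ref{exist-medmu} and noting $t\to 1$, but does not make the needed quantitative constraint explicit. Your proposed strengthening of the choice in Lemma~\ref{exist-medmu}, requiring simultaneously $(1-t)\|\psi\|_\infty<2\gamma$ and $h_{\mu_2^t}(f)+\int\psi\,d\mu_2^t>P_{top}(f,\psi)-\gamma$ (both hold for $t$ near $1$, since the second is an open condition), makes the estimate airtight and is the correct way to close the argument for large $k$.
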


\begin{proof}
In order to prove item (1), let
$\underline{j}=(j_1, j_2, \dots , j_{r_k+s_k-1})\in \mathbb Z_{+}^{r_k+s_k-1}$ be arbitrary so that  $0\leq j_i\leq m(\vep)+1$.
Let $y_1^{}\neq y_2^{}\in S_k^2(\underline j)$ shadow the orbits of points in the strings $(x_1^{k}, \dots , x_{r_k^{}}^{k}) \neq (z_1^{k}, \dots , z_{r_k^{}}^{k}) \in (S_k^1)^{r_k}$ and also $s_k$ times the finite piece of orbit of $\widetilde{x_k}$, respectively.
There exists $1\le i \le r_k$ such that $x_i^{} \neq z_i^{}$ and, using that $S_k^1$ is $(n_k^1,6\vep)$-separated,
\begin{eqnarray*}
d_{{n_k^{2}(\underline j)}}^{} (y_{1}, y_{2})
	\geq  d_{n_k^{1}}(x_i^{k}, z_i^{k}) -  d_{n_k^{1}}(y_{1}, x_i^{k}) - d_{n_k^{1}}(z_i^{k}, y_{2})
	\ge 4\vep.
\end{eqnarray*}
Therefore,  $S_k^2(\underline j)$ is $(n_k^2(\underline j), 4\vep)$-separated for every $\underline j$. This implies (1).

\medskip
Now we prove (3). Take $j_0^{k}=0$ and  write the Birkhoff sum $\sum_{j = 0}^{n_k^2-1}\varphi(f^{j}(x))$ by
\begin{align}
\sum_{j = 0}^{n_k^2-1}\varphi(f^{j}(x))
	& = \sum_{l = 1}^{r_k^{}}\sum_{j = 0}^{n_k^1-1}\varphi(f^{j+(l-1)n_k^1 +\sum_{t\leq l-1}\;j_t^{k}}~(x)) \nonumber \\
	& + \sum_{l = 1}^{s_k^{}}\sum_{j = 0}^{\widetilde{n}_k^{}-1}\varphi(f^{j+ (l-1)\widetilde{n}_k^{} +r_k^{} n_k^1 +\sum_{t\leq r_k^{} + l-1}\;j_t^{k} }~(x))  \nonumber \\
	& + \sum_{i=1}^{r_k^{}+s_k^{}-1} \sum_{j=0}^{j_i^{k} -1} \varphi(f^{j+\chi_i+\sum_{t\leq i-1} j_t^{k}}(x)),
	\label{eq:3rd}
\end{align}
where
\begin{equation*}
\chi_i = \left\{
\begin{array}{ll}
i n_k^1~~~~~~~~ ~ & ~\hbox {~ if }~ 1\leq i \leq r_k^{} \\
r_k n_k^1+(i-r_k^{})\widetilde{n}_k^{} & ~\hbox {~ if }~ r_k^{} < i \leq r_k^{}+s_k^{}-1.
\end{array}
\right.
\end{equation*}
The third expression in the right hand-side of \eqref{eq:3rd} satisfies
$$
\Big\| \sum_{i=1}^{r_k^{}+s_k^{}-1} \sum_{j=0}^{j_i^{} -1} \varphi(f^{j+\chi_i+\sum_{t\leq i-1}~j_t^{}}(x)) \Big\|
	\leq (r_k+s_k-1)(m(\vep)+1)\|\varphi\|_\infty.
$$
Using ~\eqref{skrk} one can estimate the Birkhoff sums in terms of the periods of shadowing and the remainder terms
as follows:
\begin{align*}
\Big\| \sum_{j = 0}^{n_k^2-1}\varphi(f^{j}(x)) & - n_k^2\, \int \varphi d\mu_2\Big\|
	\leq (r_k^{} n_k^1 + s_k^{} \widetilde{n}_k) \var(\varphi, \vep) \\
	&+ \Big\|\sum_{l = 1}^{r_k}\Big(\sum_{j = 0}^{n_k^1-1} \varphi(f^{j+(l-1)n_k^1
	+\sum_{t\leq l-1} j_t^{}} (x_l^k)) - n_k^1\int \varphi d\mu_1\Big)\Big\|\\	
	&+ \Big\|\sum_{l = 1}^{s_k^{}} \Big( \sum_{j = 0}^{\widetilde{n}_k^{}-1}\varphi(f^{j+r_k^{} n_k^1 +\sum_{t\leq l-1}j_{r_k^{} + t}
	+(l-1)\widetilde{n}_k^{}}~(\widetilde{x}_k^{})) -  \widetilde{n}_k^{} \int \varphi d\nu\Big)\Big\|\\
	& + \Big\| n_k^2 \int \varphi d\mu_2 - r_k^{} n_k^1 \int \varphi d\mu_1 - s_k \widetilde{n}_k^{} \int \varphi d\nu\Big\|\\
	& + (r_k+s_k-1)(m(\vep)+1)\|\varphi\|_\infty\\
	&\leq n_k^2 \, (\var(\varphi, \vep) +  \zeta_k^{})
	 + |t n_k^2 - r_k n_k^1| \| \varphi\|_\infty + |(1-t) n_k^2 - s_k \widetilde{n}_k^{} | \| \varphi\|_\infty \\
	& +(r_k+s_k-1)(m(\vep)+1)\|\varphi\|_\infty.
\end{align*}
Dividing all terms in the previous estimate by $n_k^{2}$
and using ~\eqref{zeronk2} - \eqref{skrk} we conclude that item (3) holds.

\medskip
We are now left to prove item (2).  First, computations similar to \eqref{eq:3rd} for the potential $\psi \in C^0(X,\mathbb R)$ yield
\begin{align}
\sum_{x\in \widehat{S_k^2}} \exp \{\sum_{i=0}^{n_k^2 -1} \psi(f^i (x))\}
	& \ge \big[\sum_{z\in  S_k^1} \exp \{\sum_{i=0}^{n_k^1 -1} \psi(f^i (z))\} \Big]^{r_k} \nonumber \\
	& \quad \times e^{-n_k^2 \big[  \var(\psi,\vep) + \frac{s_k \widetilde{n}_k}{n_k^2} |\psi|_\infty + \frac{r_k+s_k -1}{n_k^2}  |\psi|_\infty
	\big]} \nonumber \\
	& \ge  \exp\; (n_k^2 [ t P_{top}^{}(f,\psi,\vep) - \var(\psi,\vep) - 5\gamma ] ) \label{estimateMk2}
\end{align}
for every large $k\ge 1$.
Here we used equations ~\eqref{skrk}, \eqref{zeronk2} and Lemma~\ref{constnkesk}.
Recall the definition of $\widehat{S_k^2}$ and consider the shadowing point map
\[
\begin{array}{ccc}
\mathcal S : \{0,1, \dots, m(\vep)\}^{r_k+s_k-1} \times (S_k^1)^{r_k} \times \{\widetilde{x_k}\}^{s_k} & \to &  \widehat{S_k^2} \subset X \\
	(\underline j, \underline x, (\widetilde{x_k}, \dots, \widetilde{x_k})) & \mapsto &  y(\underline j, \underline x,
                            	 \widetilde{x_k},s_k).
\end{array}
\]
Observe that
$$
\widehat{S_k^2}
		=  \bigsqcup_{\underline j} S_k^2(\underline j)
		=  \bigsqcup_{\underline j} \text{Image}(\mathcal S(\underline j, \cdot))
$$
where the union is over all possible $\underline j \in \{0, 1, \dots, m(\vep)\}^{r_k}$. Now, equations ~\eqref{skrk} and  ~\eqref{estimateMk2}, the separability condition proved in item (1) and the pigeonhole principle ensure that there exists a string
$\underline{j_k^{}}= (j_1^{k}, \dots , j_{r_k^{}+s_k^{}-1}^{k})$  such that
\begin{align*}
\sum_{x\in S_k^2(\underline j_k)} \exp \{\sum_{i=0}^{n_k^2 -1} \psi(f^i (x))\}
	& \ge \frac{1}{(m(\vep)+1)^{r_k+s_k-1}}
		\sum_{x\in \widehat{S_k^2}} \exp \{\sum_{i=0}^{n_k^2 -1} \psi(f^i (x))\} \\
	& \ge \exp\; (n_k^2 [ t P_{top}^{}(f,\psi,\vep) - \var(\psi,\vep) - 6\gamma ] )
\end{align*}
for every large $k\ge 1$. The set $S_k^2= S_k^2 (\underline{j}_k)$ satisfies the requirements of item (2). This proves the lemma.
\end{proof}

\subsubsection{Construction of sets of points with oscillatory behavior}

Consider the sequences $\{S_k\}_k$ and $\{n_k\}_k$ given by
\begin{equation*}
S_k = \left\{
\begin{array}{rl}
S_k^1~,~ & \hbox { if } k ~\text{is odd} \\
S_k^2~,~ & \hbox { if } k ~\text{is even},
\end{array}
\right.
\quad\text{ and}\quad
n_k = \left\{
\begin{array}{rl}
n_k^1 ~,~ & \hbox { if } k ~\text{is odd} \\
n_k^2 ~,~ & \hbox { if } k ~\text{is even}.
\end{array}
\right.
\end{equation*}
Lemmas \ref{constnkesk} and \ref{lemaestimSK2} ensure that
\begin{equation}~\label{cardsk}
M_k := \sum_{x\in S_k} \exp \{\sum_{i=0}^{n_k -1} \psi(f^i (x))\} \geq \exp\; (n_k [ t P_{top}^{}(f,\psi,\vep) - \var(\psi,\vep) - 6\gamma ])
\end{equation}
for every large $k\geq 1$. Since we will construct sets of points that interpolate between those in the sets $S_k$
within a $\frac{\vep}{2^k}$-distance (in the Bowen metric) we need the transition times $m_k=m(\frac{\vep}{2^k})$
to be negligible in comparison with the total size of the orbits. For that, choose
a strictly increasing sequence of integers  $\{N_k^{}\}_{k\geq 0}$ so that $N_0^{} = 1$,
\begin{align}\label{limdosN}
& \lim_{k \to~\infty}\frac{n_{k+1}^{} + m_k}{N_{k}^{}}=0,~~~~~~ \quad\text{and}\quad ~~~~~~~~~\nonumber \\
& \lim_{k \to~\infty}\frac{1+N_1 (n_{1}^{}+m_k )+ \dots + N_k^{}  (n_{k}^{}+m_k )}{N_{k+1}^{}}=0.
\end{align}

For any fixed $k\ge 1$ and any string $\underline x =(x_1^k, x_2^k, \dots, x_{N_k}^k) \in  S_{k}^{N_k^{}}$ there exists a point
$y=y (\underline{x}) \in X$ which satisfies
\begin{eqnarray*}
d_{n_k^{}}(f^{a_j^{}}(y), x_{i_j^{}}^{k} )<\frac{\vep}{2^k},~~~~\forall~
j=1, 2, \dots , N_k^{}
\end{eqnarray*}

\noindent where
\begin{equation*}\label{defaj}
a_j =\begin{cases}
0 & \text{, ~~if $j=1$}\\
(j - 1)n_{k}^{} + \sum_{r=1}^{j - 1} p_{k,r}^{} & \text{,~~if $j= 2, \dots , N_k $}
\end{cases}
\end{equation*}
and $p_{k,r}^{}$ are the transition time functions, bounded by $m_k$.

Define
$$
C_{k} = \big\{y(\underline x ) \in X : \underline x = (x_1^k, x_2^k, \dots, x_{N_k}^k) \in  S_{k}^{N_k^{}}\big\}
$$
and
$
c_{k}^{}= N_{k}^{} n_{k}^{} + \sum_{r=1}^{N_k^{} - 1} p_{k,r}^{}
$~
(it is a function on $C_k$).
Proceeding as before, it is not hard to check that for any fixed $\underline s=(s_1, \dots, s_{N_k-1})$ (with all coordinates bounded by $m_k$) the subset  $C_k(\underline s) \subset C_k$ with these prescribed transition times is a $(3\vep,N_{k}^{} n_{k}^{} + \sum_{i=1}^{N_k^{} - 1} s_i)$-separated set.
Using ~\eqref{cardsk} and the pigeonhole principle, there exists
$\underline{s_k^{}}= (s_1^{k}, \dots , s_{N_k-1}^{k})$  so that the set
$$
{C}_{k}^{}(\underline s_k)
	=\big\{y(\underline{x})\in C_k^{} \colon \underline{x}\in S_k^{N_k} \;\text{and} \;
	p_{k, 1}^{}=s_1^{k}, ~ \dots ~,~p_{k, N_{k}-1}^{} = s_{N_k-1}^{k} \big\}
$$
satisfies
\begin{align}
\sum_{x\in {C}_{k}^{}(\underline s_k)} \exp \{\sum_{i=0}^{c_k^{} -1} \psi(f^i (x))\}
	& \ge \frac{1}{(m_k^{}+1)^{N_k^{}}}
		\sum_{x\in C_k} \exp \{\sum_{i=0}^{c_k -1} \psi(f^i (x))\} \nonumber  \\
	& \ge \exp\; (n_k N_k [ t P_{top}^{}(f,\psi,\vep) - \var(\psi,\vep) - 6\gamma ] )\nonumber  \\
	& \quad \times e^{-c_k \big[  \var(\psi,\frac\vep{2^k}) + \frac{N_k -1}{c_k}  |\psi|_\infty + \frac{N_k \log m_k}{c_k}
	\big]}   \nonumber \\
	& \ge \exp\; (c_k [ t P_{top}^{}(f,\psi,\vep) - \var(\psi,\vep) - 7\gamma ] ) \label{descardck}
\end{align}
for every large $k\ge 1$, where
$
c_k^{} = n_k^{} N_k^{} + \sum_{i=1}^{N_k-1} s_i^k
$
is constant for all points of the set ${C}_{k}^{}(\underline s_k)$.
We used that $ \log m_k^{}/ n_k^{} \to 0$ (cf.  \eqref{defdenk2}) and $(n_k N_k)/ c_k \to1$
as $k\to\infty$.
As before we will denote ${C}_{k}^{}(\underline s_k)$ simply by $C_k$.

\medskip
We now construct points whose averages
oscillate between $\int \varphi \, d\mu_1$ and $\int \varphi \, d\mu_2$. Define $T_1 = C_1$ and $t_1^{}=c_1^{}$, and we define the families $(T_k)_{k\ge 1}$ and $(t_k)_k$ recursively.
If $x \in T_k$ and $y \in C_{k+1}$ there exists a point $z := z(x, y) \in X$ and $0\le p_{k+1}\le m_{k+1}$ such that
\begin{equation*}
d_{t_k^{}}(x, z) < \frac{\vep}{2^{k+1}}  \quad\text{and}\quad d_{c_{k+1}^{}}(f^{t_k+p_{k+1}}(z), y)<\frac{\vep}{2^{k+1}}.
\end{equation*}

Define the set
\begin{align*}
T_{k+1}= \left\{ z=z(x, y)~\in X  ~: x\in~T_k,~y\in~C_{k+1} \right\}
\end{align*}
and  $t_{k+1}^{} = t_k^{} + p_{k+1}^{} + c_{k+1}^{}$ (it is a function on $T_k$).
Using the previous argument once more as above we conclude that there exists {$0\le \underline p_{k+1} \le m_{k+1}$}
such that $T_{k+1}(\underline p_{k+1}) \subset T_{k+1}$ is a $(2\vep, t_k^{} + \underline p_{k+1}+ c_{k+1})$-separated set.
We will keep denoting $T_{k+1}(\underline p_{k+1})$ by $T_{k+1}$ for notational simplicity.
In particular, if $z=z(x,y) \in T_{k+1}$ then
\begin{equation*}
d_{t_k^{}}(x, z) < \frac{\vep}{2^{k+1}} ~ \quad\text{and}\quad ~d_{c_{k+1}^{}} \big(f^{t_k+\underline p_{k+1} }(z), y\big)<\frac{\vep}{2^{k+1}}.
\end{equation*}

\subsubsection{Construction of a fractal set with large topological pressure}\label{subsec:entropy}

Define
\begin{eqnarray*}
F_k^{} = \bigcup_{z \in  {T}_k^{}} ~\overline{B_{t_k^{}}^{}\big(z, \frac{\vep}{2^{k}}\big)}
	\quad\text{and}\quad
	F = \bigcap_{k \geq 1} F_k^{}.
\end{eqnarray*}

The previous set $F$ depends on $\vep$, but we shall omit its dependence for notational simplicity.
As $F_{k+1}^{}\subset F_k^{}$ for all $k\ge 1$ then $F$ is the (non-empty) intersection of a sequence of compact and nested subsets.
In the present subsection we will prove the following:
\begin{equation}\label{eqboundF}
P_F(f,\psi, \vep) \ge  t P_{top}^{}(f,\psi,\vep) - \var(\psi,\vep) - 9\gamma .
\end{equation}

\begin{remark}\label{rmunicrepp}
Every point $x \in F$ can be uniquely represented by an itinerary $\underline{x} = (\underline{x_1}, \underline{x_2}, \underline{x_3},
\dots)$ where each $\underline{x_i}= (x_{1}^i, \dots , x_{N_k^{}}^{i}) \in {S}_i^{N_i}$. We will keep denoting by
$y(\underline{x}_i^{}) \in {C}_i$ the point in ${C}_i$ determined by the sequence $\underline x_i$ with a sequence
$\underline{s}_i=(s_1^{i}, \dots , s_{N_i-1}^{i})$ of transition times, and by
$z_{i}^{}(\underline{x}) = z(z_{i-1}^{}(\underline x), y(\underline{x}_{i}^{})) \in {T}_{i}$ the element constructed using the points
$z_{i-1}^{}(\underline x) \in T_{i-1}$ and  $y(\underline{x}_{i}^{}) \in C_i$, and with transition time $\underline p_i$.
\end{remark}

We will use the following pressure distribution principle:

\begin{proposition}\cite[Proposition 2.4]{TMP} \label{exiseq-s}
Let $f : X \to X$ a continuous map on a compact metric space $X$ and let $Z \subset X$ be a Borel set.
Suppose there are $\vep > 0$,  $s \in \mathbb R$, $K>0$ and a sequence of probability measures  $(\mu_k^{})_k$
satisfying:
\begin{itemize}
\item[(i)] $\mu_k \to \mu$ and $\mu (Z)> 0$, and
\item[(ii)] $ \limsup_{k \to \infty} \mu_k (B_{n}(x, \vep))\leq K \exp\{-n s+\sum_{i=0}^{n-1} \psi(f^i(x)\}$ for every large $n$ and every ball $B_n^{}(x, \vep)$ such that $B_{n}(x, \vep)\cap Z \neq \emptyset$.
\end{itemize}
Then, $P_{Z}^{}(\psi, \vep) \geq s$.
\end{proposition}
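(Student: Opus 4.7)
The plan is to verify directly that $m(Z,\psi,s,\vep)>0$, and then to upgrade this to $P_Z(f,\psi,\vep)\ge s$ by exploiting the monotonicity in the exponent of the Carath\'eodory sums. Since $P_Z(f,\psi,\vep)=\sup\{s'\in\re\colon m(Z,\psi,s',\vep)=\infty\}$, the key observation is that if $M(Z,\psi,s,\vep,N)\ge c>0$ uniformly in $N$, then for any $s'<s$ and any admissible cover $\Gamma=\{B_{n_i}(x_i,\vep)\}$ with $n_i\ge N$ one has
\[
Q(Z,\psi,s',\Gamma)\ge e^{(s-s')N}Q(Z,\psi,s,\Gamma),
\]
so $M(Z,\psi,s',\vep,N)\ge e^{(s-s')N}\cdot c\to\infty$ as $N\to\infty$, giving $m(Z,\psi,s',\vep)=\infty$ and thus $P_Z(f,\psi,\vep)\ge s$.

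The main step is therefore to produce the uniform lower bound $M(Z,\psi,s,\vep,N)\ge \mu(Z)/K$ for all $N\ge 1$. Fix an arbitrary countable cover $\Gamma=\{B_{n_i}(x_i,\vep)\}_i$ of $Z$ by Bowen dynamical balls with $n_i\ge N$; without loss of generality every ball meets $Z$, so hypothesis (ii) applies to each of them. Since each Bowen ball $B_{n_i}(x_i,\vep)$ is an open set in $X$, the Portmanteau theorem gives $\mu(B_{n_i}(x_i,\vep))\le \liminf_{k\to\infty}\mu_k(B_{n_i}(x_i,\vep))$, and combining this with hypothesis (ii) yields
\[
\mu(B_{n_i}(x_i,\vep))\le \limsup_{k\to\infty}\mu_k(B_{n_i}(x_i,\vep))\le K\exp\!\Big\{-s\, n_i+S_{n_i}\psi(B_{n_i}(x_i,\vep))\Big\},
\]
where the right-hand side uses the supremum of $S_{n_i}\psi$ over the ball, exactly as in the definition of $Q(Z,\psi,s,\Gamma)$. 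Summing over $i$ and using that $\Gamma$ covers $Z$,
\[
\mu(Z)\le \sum_i \mu(B_{n_i}(x_i,\vep))\le K\,Q(Z,\psi,s,\Gamma).
\]
Taking the infimum over all admissible covers with $n_i\ge N$ gives $M(Z,\psi,s,\vep,N)\ge \mu(Z)/K$, and letting $N\to\infty$ produces $m(Z,\psi,s,\vep)\ge \mu(Z)/K>0$ thanks to hypothesis (i).

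The step I expect to require the most care is the passage from the pointwise hypothesis (ii), which gives only a $\limsup$ estimate for each fixed ball, to a global bound valid for the (possibly countable) cover $\Gamma$ and for the limit measure $\mu$. The $\limsup$ is upgraded to a statement about $\mu$ for free because each Bowen ball is open, so that $\mu(B_{n_i}(x_i,\vep))\le \liminf_k\mu_k(B_{n_i}(x_i,\vep))\le \limsup_k\mu_k(B_{n_i}(x_i,\vep))$; this is what allows the ball-by-ball bound to be summed without needing any uniformity in $k$ across the cover. Once this point is handled, the rest of the argument is a routine application of the Carath\'eodory formalism together with the elementary monotonicity bound on $Q$ in $s$ used in the first paragraph.
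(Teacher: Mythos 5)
The paper does not prove this proposition; it simply cites it from \cite[Proposition 2.4]{TMP}. Your argument is correct and is the standard pressure distribution principle proof, which is also the argument given in the cited source: you use the openness of Bowen balls together with Portmanteau to pass the $\limsup$ bound in hypothesis (ii) to the limit measure $\mu$, sum over an arbitrary admissible cover to get $\mu(Z)\le K\,Q(Z,\psi,s,\Gamma)$, and conclude $m(Z,\psi,s,\vep)\ge\mu(Z)/K>0$, which forces $P_Z(f,\psi,\vep)\ge s$. Two points you handle correctly and which are worth noting: (a) hypothesis (ii) gives the bound with $S_n\psi$ evaluated at the center $x$, while $Q$ uses the supremum over the ball, and the needed inequality goes the right way since the center lies in the ball; (b) since (ii) is stated only ``for every large $n$'', one must restrict to $N$ at least as large as that threshold before taking the infimum over covers, which is harmless because $m(Z,\psi,s,\vep)$ is the limit of $M(Z,\psi,s,\vep,N)$ as $N\to\infty$. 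Your opening paragraph, establishing that $m(Z,\psi,s',\vep)=\infty$ for every $s'<s$, is a clean way to finish without invoking any unproven facts about where the Carath\'eodory jump occurs, though the direct observation that $m(Z,\psi,s,\vep)>0$ already rules out $s>P_Z(f,\psi,\vep)$ would also suffice.
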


Assume first that $\htop(f)<\infty$ (hence $P_{top}(f,\psi)<\infty$, by the variational principle). We use the previous proposition to estimate $P_F(f,\psi,\vep)$. Consider a sequence $(\mu_k)_k$ of measures on $F$ as follows: take
$\nu_k^{} = \sum_{z\in {T}_k }  \Psi(z)\, \delta_z^{}$ and its normalization

\begin{eqnarray*}
\mu_k^{} = \frac{1}{Z_k} \nu_k^{}
	\quad \text{where}\quad
	Z_k=\sum_{z\in T_k} \Psi(z),
\end{eqnarray*}
and for every $z = z(\underline{x}_1,  \dots ,  \underline{x}_k) \in T_k$ and  $\underline{x}_i =(x_1^{i}, \dots , \underline{x}_{N_i}^{i})\in  S_i^{N_i}$ we set
$$
\Psi(z) = \prod_{i=1}^{k} \, \prod_{l=1}^{N_i}\, \exp S_{n_i}\, \psi(x_{l}^{i}).
$$
We will prove that $(\mu_k^{})_k^{}$ satisfies the hypothesis of Proposition \ref{exiseq-s}. Given $n\geq 1$, let
$B=B_{n}^{}(q, \vep / 2)$ be a dynamical ball that intersects $F$, let $k\ge 1$ be such that $t_k^{} \leq n < t_{k+1}^{}$, and let
$0 \le j \le N_{k+1}^{}-1$ be so that
\begin{equation}\label{choiceN}
t_k^{} + j n_{k+1}^{} + \sum_{1\le i \le j} s_i^{k+1}
	\leq n
	< t_k^{} + (j+1) n_{k+1}^{} + \sum_{1\le i \le j+1} s_i^{k+1}.
\end{equation}

\begin{lemma}\label{lem-muB}
If $\mu_{k+1}(B) >0$ then
$$
\nu_{k+1}(B) \le e^{S_n\psi(q) + n  \var (\psi,\vep)  + ( \sum_{i=1}^{k} N_i m_i + j\,m_{k+1})  |\psi|_\infty} \, M_{k+1}^{N_{k+1}-j}.
$$
\end{lemma}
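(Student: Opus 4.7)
The plan is to bound $\nu_{k+1}(B) = \sum_{z \in T_{k+1} \cap B} \Psi(z)$ by combining a rigidity step (which restricts how many itineraries can contribute) with the multiplicative structure of $\Psi$.

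\textbf{Step 1: rigidity of itineraries.} By Remark \ref{rmunicrepp}, every $z \in T_{k+1}$ is uniquely encoded by $(\underline x_1, \dots, \underline x_{k+1})$ with the prescribed transition data $(\underline s_i, \underline p_i)$. The hierarchical shadowing yields, for $0 \le r < t_i$, the bound $d(f^r(z), f^r(w_i)) < \sum_{s=i+1}^{k+1}\vep/2^s < \vep/2^i$, where $w_i \in T_i$ is the level-$i$ projection of $z$. Inductively, $T_i$ is $(2\vep, t_i)$-separated (inherited from the $(4\vep, n_i)$- or $(6\vep, n_i)$-separation of $S_i$ which propagates to $(3\vep, c_i)$-separation of $C_i$). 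Hence two points of $T_{k+1}$ in $B_n(q, \vep/2)$ with distinct level-$k$ projections would satisfy $d_{t_k}(z_1, z_2) \ge 2\vep - 2\vep/2^{k+1} > \vep$, contradicting $t_k \le n$. So the prefix $(\underline x_1, \dots, \underline x_k)$ is common to all $z \in T_{k+1} \cap B$. Repeating the argument one level deeper inside $C_{k+1}$: by \eqref{choiceN} the time window $[0,n)$ fully contains the first $j$ shadowing slots of $C_{k+1}$, so any disagreement at some $l \le j$ would produce separation at least $4\vep - 2\vep/2^{k+1} > \vep$ at a time $\le n$. Therefore $(x_1^{k+1}, \dots, x_j^{k+1})$ is also forced, and only $(x_{j+1}^{k+1}, \dots, x_{N_{k+1}}^{k+1})$ remains free.

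\textbf{Step 2: factorization.} With the prefix fixed, $\Psi(z)$ factors as
\[
\Psi(z) = \Psi_{\mathrm{pref}} \cdot \prod_{l=j+1}^{N_{k+1}} \exp S_{n_{k+1}}\psi(x_l^{k+1}),
\qquad
\Psi_{\mathrm{pref}} = \prod_{i=1}^{k}\prod_{l=1}^{N_i}\!\exp S_{n_i}\psi(x_l^i)\cdot\!\prod_{l=1}^{j}\!\exp S_{n_{k+1}}\psi(x_l^{k+1}).
\]
Summing over the free entries, each factor is bounded by $\sum_{x\in S_{k+1}}\exp S_{n_{k+1}}\psi(x) = M_{k+1}$, giving $\nu_{k+1}(B) \le \Psi_{\mathrm{pref}} \cdot M_{k+1}^{N_{k+1}-j}$.

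\textbf{Step 3: comparison with $S_n\psi(q)$.} Pick any $z \in T_{k+1} \cap B$. Decompose $[0,n)$ into the shadowing windows (within each of which $f^r(z)$ is $\vep$-close to the shadowed orbit, so the Birkhoff sums agree up to $\var(\psi,\vep)$ per iterate) and the transition intervals. The total length of transition intervals up to time $n$ is bounded by $\sum_{i=1}^{k} N_i m_i + j\, m_{k+1}$, and each such iterate contributes at most $|\psi|_\infty$. Consequently
\[
\log \Psi_{\mathrm{pref}} \;\le\; S_n\psi(z) + n\var(\psi,\vep) + \Big(\sum_{i=1}^{k} N_i m_i + j\, m_{k+1}\Big)|\psi|_\infty,
\]
and since $z \in B_n(q,\vep/2)$ one has $|S_n\psi(z) - S_n\psi(q)| \le n\var(\psi,\vep)$ (absorbing the factor $2$ into a single $\var$). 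Combining with Step 2 yields the stated inequality.

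The main obstacle is the rigidity step: one must carefully track that the shadowing errors accumulated across the nested construction stay strictly smaller than the separation margins inherited from $S_i$ and $C_i$, so that membership in a single Bowen ball of radius $\vep/2$ really does force agreement on the full prefix. The bookkeeping of transition times in Step 3 is then routine, as every transition block is bounded above by $m_i$ and there are at most $N_i$ of them per level.
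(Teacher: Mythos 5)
Your proposal is correct and follows essentially the same route as the paper: you use the separation of $T_k$ and $S_{k+1}$ against the Bowen ball membership to force the prefix $(\underline x_1,\dots,\underline x_k,x_1^{k+1},\dots,x_j^{k+1})$, factor $\Psi$, sum the free tail to get $M_{k+1}^{N_{k+1}-j}$, and compare $\Psi_{\mathrm{pref}}$ with $e^{S_n\psi(q)}$ via shadowing/transition bookkeeping. This matches the paper's argument step for step.
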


\begin{proof}
If $\mu_{k+1}(B)>0$ then ${T}_{k+1} \cap B \neq \emptyset$. Let $z=z(x, y)\in {T}_{k+1} \cap B$ determined by $x\in {T}_k$
and $y=y(\underline x_1^{}, \dots, \underline x_{N_{k+1}^{}}^{})\in {C}_{k+1}$ and let $\underline p_{k+1}$ be so that
\begin{equation*}
d_{t_k^{}}(z, x) < \frac{\vep}{2^{k+1}}  \quad\text{and}\quad d_{c_{k+1}^{}}(f^{t_k^{} +  \underline p_{k+1}}(z), y)<\frac{\vep}{2^{k+1}}.
\end{equation*}

Since $z\in B_{n}^{}(q, \vep / 2)$ and $n\ge t_k$ then $d_{t_k}(x, q) \le d_{t_k}(x, z) + d_{t_k}(z, q) <\vep$.
Using the definition of $n$ and the fact that $d_{n}(z, q)<\frac{\vep}{2}$ we have that
$$
d_{n_{k+1}^{}}^{} ( f^{t_k^{} + (l-1) n_{k+1}^{} + \sum_{0\le i \le l-1} s_i^{k+1} }(z),f^{t_k^{} + (l-1) n_{k+1}^{} + \sum_{0\le i \le l-1} s_i^{k+1} }(q))
<\frac{\vep}{2}
$$
for all $l = 1, \dots , j.$
Moreover, by construction $d_{c_{k+1}^{}}(f^{t_k^{} + \underline p_{k+1}}(z), y)<\frac{\vep}{2^{k+1}}$.  This implies on the following
estimates for blocks of size $n_{k+1}$:
$$
d_{n_{k+1}^{}}^{}( f^{t_k^{} + \underline p_{k+1} + (l-1)\;n_{k+1}^{} + \sum_{0\le i \le l-1} s_i^k }(z),
	f^{ (l-1)\;n_{k+1}^{} + \sum_{0\le i \le l-1} s_i^k }(y))<\frac{\vep}{2^{k+1}}
$$
for all $l = 1, \dots , N_{k+1}$.  Using that $y=y(\underline x_1^{}, \dots, \underline x_{N_{k+1}^{}}^{})\in {C}_{k+1}$ we also have
$$
d_{n_{k+1}^{}}^{}( f^{ (l-1)\;n_{k+1}^{} + \sum_{0\le i \le l-1} s_i^k }(y),~x_{l^{}}^{k+1})<\frac{\vep}{2^{k+1}}
$$
for all $l = 1, \dots , j.$
Altogether the previous estimates imply
\begin{equation}\label{eq:imply-sep}
d_{n_{k+1}^{}}^{}( f^{t_k^{} + \underline p_{k+1} + (l-1)\;n_{k+1}^{} + \sum_{0\le i \le l-1} s_i^k }(z),
~x_{l}^{k+1} )<2\;\vep
\end{equation}
for all $l = 1, \dots , j.$

We remark that
if $\hat z=z(\hat x, \hat y) \in T_{k+1} \cap B$ then $d_{t_k}(\hat x, q)<\vep$ and, consequently, $d_{t_k}(\hat x, x)<2\vep$.
 Since ${T}_k$ is $(t_k, 2\vep)$ separated and $n \geq t_k$  then $x = \hat x$. Moreover, the previous estimates also ensure (cf. \eqref{eq:imply-sep})
that
$
d_{n_{k+1}^{}}^{}(x_{l}^{k+1}, {\hat x}_{l}^{k+1} )<4\;\vep
$
for all $l = 1, \dots, j$.
However, as $x_{i^{}}^{k+1}$ and ${\hat x}_{i^{}}^{k+1} $ belong to $S_{k+1}$, which is a $(n_{k+1}^{}, 4\vep)$-separated set then $x_{i^{}}^{k+1}= \hat{x}_{i^{}}^{k+1}$ for every $i=1, \dots, j$.

The previous argument implies that all elements $z=z(x,y) \in T_{k+1} \cap B$ with
$x\in T_k$ and $y=(\underline x_1, \dots, \underline x_{N_{k+1}}) \in C_{k+1}$ may only differ in the last
$N_{k+1}-j$ elements of $S_{k+1}$. Therefore, by the choice of $k$ and $j$ in \eqref{choiceN},
\begin{align*}
\nu_{k+1}(B)
	& = \sum_{z\in {T}_{k+1} \cap B}  \Psi(z) \nonumber \\
	&\le \Psi(x) \big[ \prod_{l=1}^{j} \exp S_{n_{k+1}} \psi (x_{l}^{k+1}) \big]
		\, \sum_{l= j+1}^{N_{k+1}} \exp(S_{n_{k+1}} \psi (x_{l}^{k+1})) \nonumber  \\
	&= \Psi(x) \big[ \prod_{l=1}^{j} \exp S_{n_{k+1}} \psi (x_{l}^{k+1}) \big]
		\prod_{l=j+1}^{N_{k+1}} \sum_{ \tilde x \in S_{k+1}} \exp(S_{n_{k+1}} \psi (\tilde x )) \nonumber  \\
	&= \Psi(x) \big[ \prod_{l=1}^{j} \exp S_{n_{k+1}} \psi (x_{l}^{k+1}) \big] \, M_{k+1}^{N_{k+1}-j}  \label{eq:upbnu} \\
	& \le e^{S_n\psi(q) + n  \var (\psi,\vep)  + ( \sum_{i=1}^{k} N_i m_i + j\,m_{k+1})  |\psi|_\infty} \, M_{k+1}^{N_{k+1}-j}
\end{align*}
which proves the lemma.
\end{proof}

\begin{lemma}\label{lemadesigwk}
$
Z_k \,(M_{k+1})^j \geq \exp (n (t\,P_{top}(f, \psi,\vep) - \var(\psi,\vep)- 8\gamma))
$
for all $k \gg 1$.
\end{lemma}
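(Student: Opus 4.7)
The plan is to exploit the product structure of $Z_k$ arising from the construction. By Remark~\ref{rmunicrepp} together with the $(2\vep, t_k)$-separability of $T_k$, the shadowing map $(\underline x_1,\ldots,\underline x_k)\in \prod_{i=1}^k S_i^{N_i}\mapsto z\in T_k$ (with the fixed transition times $\underline s_i, \underline p_i$) is a bijection onto its image. Since $\Psi(z)=\prod_{i=1}^k\prod_{l=1}^{N_i}\exp S_{n_i}\psi(x_l^i)$ depends only on the itinerary, the sum factorizes as
$$Z_k=\sum_{z\in T_k}\Psi(z)=\prod_{i=1}^k \Big(\sum_{x\in S_i}\exp S_{n_i}\psi(x)\Big)^{N_i}=\prod_{i=1}^k M_i^{N_i}.$$

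Next I will combine the pressure bounds for the $M_i$. Lemma~\ref{constnkesk} gives $M_i\geq\exp(n_i(P_{top}(f,\psi,\vep)-4\gamma))$ for odd $i$, while Lemma~\ref{lemaestimSK2}(2) yields $M_i\geq\exp(n_i[tP_{top}(f,\psi,\vep)-\var(\psi,\vep)-6\gamma])$ for even $i$. Since $t\in(0,1)$ and $\var(\psi,\vep),\gamma\geq 0$, the second inequality is the weaker of the two; setting $\alpha:=tP_{top}(f,\psi,\vep)-\var(\psi,\vep)-6\gamma$ we obtain $M_i\geq e^{n_i\alpha}$ uniformly for all $i$ large enough. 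Taking the product and multiplying by $M_{k+1}^{j}$ gives
$$Z_k(M_{k+1})^j\geq \exp\Big(\alpha\,\big(\textstyle\sum_{i=1}^k N_i n_i + j\,n_{k+1}\big)\Big).$$

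The final step is to convert the exponent to $n\cdot(\text{something})$. From \eqref{choiceN} and the identity
$$t_k=\sum_{i=1}^k N_i n_i+\sum_{i=1}^k\sum_{l=1}^{N_i-1}s_l^i+\sum_{i=2}^k \underline p_i,$$
one has $n\leq t_k+(j+1)n_{k+1}+\sum_{l=1}^{j+1}s_l^{k+1}$, and the growth conditions \eqref{limdosN} imposed on $(N_i)$ ensure that the accumulated transition contributions and the extra $n_{k+1}$ term are $o(\sum_{i=1}^k N_i n_i)$. Hence $\sum_{i=1}^k N_i n_i+j n_{k+1}\geq n(1-\delta_k)$ with $\delta_k\to 0$. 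Under the standing finiteness assumption $P_{top}(f,\psi)<\infty$ in Subsection~\ref{subsec:entropy}, $\alpha$ is bounded, so $\delta_k|\alpha|<2\gamma$ for all $k\gg 1$, whence
$$Z_k(M_{k+1})^j\geq \exp(n(\alpha-2\gamma))=\exp\bigl(n[tP_{top}(f,\psi,\vep)-\var(\psi,\vep)-8\gamma]\bigr),$$
as claimed.

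The main technical obstacle is the third step: carefully keeping track of the many error terms (the transition times $s_l^i$ and $\underline p_i$, the possibly truncated final block of length $j n_{k+1}$, and the gap between $\sum_{i=1}^k N_i n_i+jn_{k+1}$ and $n$) and absorbing them into the fixed slack $2\gamma$. This is precisely where the precise choice of $(N_i)_i$ made in \eqref{limdosN} and the finiteness of the pressure at scale $\vep$ enter in an essential way.
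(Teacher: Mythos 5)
Your proof is correct and follows essentially the same route as the paper: factorize $Z_k=\prod_{i=1}^k M_i^{N_i}$ via the itinerary bijection, apply the uniform lower bound $M_i\geq \exp(n_i[tP_{top}(f,\psi,\vep)-\var(\psi,\vep)-6\gamma])$ coming from Lemmas~\ref{constnkesk} and~\ref{lemaestimSK2}, and absorb the transition-time and truncation errors into the extra $2\gamma$ using the growth conditions in \eqref{limdosN}. (The paper writes ``$Z_k=M_k^{N_k}$'' as an apparent shorthand but immediately uses the same product formula $M_1^{N_1}\cdots M_k^{N_k}M_{k+1}^j$ as you do, and its intermediate step inserting a factor $t$ is just a slightly less transparent way of carrying out your $\delta_k|\alpha|\le 2\gamma$ absorption, which does correctly require the finiteness of $P_{top}(f,\psi,\vep)$ as you note.)
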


\begin{proof}
By the variational principle and the fact that $\psi$ is bounded away from zero and infinity assumption (i) is equivalent
to $P_{top}(f,\psi)<\infty$. A simple computation shows that $Z_k={M_k}^{N_k}$ for every $k\ge 1$. Moreover, using
\begin{align*}
n  & < t_k^{} + (j+1) (n_{k+1}^{} + m_{k+1}) \\
	& = \sum_{i=1}^k n_i^{} N_i^{}+ \sum_{i=1}^k \big(\underline p_i + \sum_{l=1}^{N_i-1} s^i_l \big) + (j+1) (n_{k+1}^{} + m_{k+1}) \\
	& \le \sum_{i=1}^k [(n_i^{} +m_i) N_i^{}  + m_i]  + (j+1) (n_{k+1}^{} + m_{k+1})
\end{align*}
equation ~\eqref{cardsk}, and that $m_i \ll n_i \ll N_i$ for every $1\leq i \leq k$ we get
\begin{align*}
Z_k\,M_{k+1}^j &= M_{1}^{N_1} \dots M_{k}^{N_k} M_{k+1}^{j}\\
	&\geq  \exp\; \big(\, (\sum_{i=1}^k N_i n_i  + j n_{k+1}) [ t P_{top}^{}(f,\psi,\vep) - \var(\psi,\vep) - 6\gamma ]\,\big) \\
	& \geq  \exp (t(\sum_{i=1}^{k} N_i( n_i+ m_i) + j(n_{k+1}+m_{k+1}))
	[ t P_{top}^{}(f,\psi,\vep) - \var(\psi,\vep) - 7\gamma ] )\\
	& \geq  \exp (n [ t\, P_{top}^{}(f,\psi,\vep) - \var(\psi,\vep) - 8\gamma ])
\end{align*}
for all large $k$, proving the lemma.
\end{proof}

\begin{corollary}\label{lem-muB2}
The following holds:
$$
\limsup_{k \to \infty} \mu_k^{} (B_{n}^{}(q, \vep/2))\leq
\exp (- n (t\,P_{top}(f, \psi,\vep) - \var(\psi,\vep)- 9\gamma) + S_n\psi(q)).
$$
\end{corollary}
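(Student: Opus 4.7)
The plan is to chain Lemma~\ref{lem-muB} with Lemma~\ref{lemadesigwk} and then absorb residual error terms into one extra $\gamma$ of slack. Fix $q\in X$ and $n\ge 1$, set $B:=B_n(q,\vep/2)$, and take the unique $k\ge 1$ and $0\le j\le N_{k+1}-1$ satisfying~\eqref{choiceN}. Writing $\mu_{k+1}(B)=\nu_{k+1}(B)/Z_{k+1}$ and using the multiplicative identity $Z_{k+1}=Z_k\cdot M_{k+1}^{N_{k+1}}$ to cancel the factor $M_{k+1}^{N_{k+1}-j}$ produced by Lemma~\ref{lem-muB}, one arrives at
$$\mu_{k+1}(B)\le \frac{1}{Z_k\,M_{k+1}^{j}}\exp\!\Big(S_n\psi(q)+n\,\var(\psi,\vep)+R_{k,n}\Big),$$
with $R_{k,n}:=\big(\sum_{i=1}^{k}N_i m_i + j\,m_{k+1}\big)\,|\psi|_\infty$. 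Substituting the lower bound of Lemma~\ref{lemadesigwk} then gives $\mu_{k+1}(B)\le \exp(S_n\psi(q)-n(tP_{top}(f,\psi,\vep)-\var(\psi,\vep)-8\gamma)+R_{k,n})$.

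Next I would verify that $R_{k,n}/n\to 0$ as $k\to\infty$ (equivalently, as $n\to\infty$, since $t_k\le n<t_{k+1}$). Using $n\ge t_k\ge \sum_{i=1}^{k}N_in_i$ together with $m_i/n_i\to 0$ (which follows from~\eqref{eqmko} for odd $i$ and from~\eqref{defdenk2} combined with~\eqref{eqmko} for even $i$), a weighted-average argument yields $\sum_{i=1}^{k}N_im_i=o(\sum_{i=1}^{k}N_in_i)=o(n)$; similarly $j\,m_{k+1}\le (n/n_{k+1})\,m_{k+1}=o(n)$. For all sufficiently large $k$ this gives $R_{k,n}\le \gamma\,n$, upgrading the $8\gamma$ in the exponent to $9\gamma$ and establishing the claimed bound for $\mu_{k+1}(B)$.

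Finally, the estimate must be transferred from $\mu_{k+1}$ to every $\mu_{k'}$ with $k'>k+1$. The recursive construction of $T_{k'}$ provides a canonical parent map $\pi:T_{k'}\to T_{k+1}$ satisfying $d_{t_{k+1}}(z',\pi(z'))<\sum_{i\ge k+2}\vep/2^i<\vep/2^{k+1}$; moreover, because $\Psi(z')$ is a product over the full itinerary of $z'$ (cf. Remark~\ref{rmunicrepp}), $\pi$ pushes $\mu_{k'}$ forward to $\mu_{k+1}$. Since $n\le t_{k+1}$, any $z'\in B\cap T_{k'}$ has $\pi(z')\in B_n(q,\vep/2+\vep/2^{k+1})$, so the previous two paragraphs apply to this slightly enlarged ball, with the extra $\vep/2^{k+1}$ absorbed into the $\var$ term (hence into $\gamma$). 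Taking $\limsup_{k'\to\infty}$ completes the proof. The main obstacle will be this transfer step: one must exploit the multiplicative structure of $\Psi$ to verify the pushforward identity and to control the accumulated Bowen errors uniformly in $k'$, rather than re-running the Lemma~\ref{lem-muB} argument at every level.
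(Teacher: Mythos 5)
Your core computation is exactly the paper's: use $\mu_{k+1}=\nu_{k+1}/Z_{k+1}$ with $Z_{k+1}=Z_k M_{k+1}^{N_{k+1}}$ so that the $M_{k+1}^{N_{k+1}-j}$ from Lemma~\ref{lem-muB} cancels to leave $1/(Z_k M_{k+1}^{j})$, then feed in Lemma~\ref{lemadesigwk} and absorb $R_{k,n}$ via $R_{k,n}/n\to 0$. (You also inherit the paper's slight bookkeeping slip: the $n\var(\psi,\vep)$ coming out of Lemma~\ref{lem-muB} and the $\var(\psi,\vep)$ inside the lower bound of Lemma~\ref{lemadesigwk} really add up to $2\var(\psi,\vep)$ rather than $\var(\psi,\vep)$; this is immaterial in the eventual application where $\psi\equiv 0$, and one can always shift the surplus into $\gamma$, but worth noting.)

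Where you genuinely go beyond the printed proof is the final paragraph. The paper's three-line proof only bounds $\mu_{k+1}(B)$ for the particular $k=k(n)$ fixed by~\eqref{choiceN}, and then asserts the corollary, whereas the stated conclusion, a $\limsup$ over $k\to\infty$ for fixed $n$ (exactly what Proposition~\ref{exiseq-s}(ii) demands), requires the same bound for every $\mu_{k'}$ with $k'$ large. You are right that this transfer must be supplied. The mechanism you propose is the correct one: the parent map $\pi:T_{k'}\to T_{k+1}$, together with the multiplicativity of $\Psi$ over the itinerary and the identity $Z_{k'}=\prod_{i\le k'}M_i^{N_i}$ (which the paper itself uses in Lemma~\ref{lemadesigwk}), gives $\pi_*\mu_{k'}=\mu_{k+1}$; and the Bowen-ball radius only grows by $\sum_{i\ge k+2}\vep/2^{i}<\vep/2^{k+1}$, which is far below the $2\vep$- and $4\vep$-separation thresholds used inside Lemma~\ref{lem-muB}, so the earlier estimate applies verbatim to the enlarged ball. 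So your proposal is the paper's argument together with the repair of its terse last step, and the repair is correct.
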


\begin{proof} By Lemmas \ref{lem-muB} and \ref{lemadesigwk} we get
\begin{align*}
\mu_{k+1}^{}(B)
	& \le \frac{1}{Z_k \,M_{k+1}^{N_{k+1}}}
	e^{S_n\psi(q) + n  \var (\psi,\vep)  + ( \sum_{i=1}^{k} N_i m_i + j\,m_{k+1})  |\psi|_\infty} \, M_{k+1}^{N_{k+1}-j} \\
	& = \frac{1}{Z_k \,M_{k+1}^{j}}
	e^{S_n\psi(q) + n  \var (\psi,\vep)  + ( \sum_{i=1}^{k} N_i m_i + j\,m_{k+1})  |\psi|_\infty} \\
	& \le \exp (- n (t\,P_{top}(f, \psi,\vep) - \var(\psi,\vep)- 9\gamma) + S_n\psi(q))
\end{align*}
for all large $k$,
proving the corollary.
\end{proof}

Now, an argument similar e.g. to \cite[p.1200]{CKL} ensures that any accumulation point $\mu$ of $\mu_k^{}$ satisfies $\mu(F)=1$. Since the hypothesis of Proposition~\ref{exiseq-s}
are satisfied we conclude that
$
P_F(f,\psi,\vep) \geq t\,P_{top}(f, \psi,\vep) - \var(\psi,\vep)- 9\gamma
$
proving equation \eqref{eqboundF}.

\medskip

Finally, by the variational principle for the topological entropy, in the case that
$\sup_{\mu\in \mathcal M_1(f)} h_\mu(f)=\htop(f)=+\infty$ (hence $P_{top}(f,\psi)=+\infty$) the argument follows with minor modifications. Indeed,
one can repeat the previous arguments and prove that
for any $K>0$ and $t\in (0,1)$ there exist invariant probability measures $\mu_1,\mu_2$ so that $\mu_1$ is ergodic,
$h_{\mu_1}(f) +\int \psi \,d\mu_1>K$, $h_{\mu_2}(f) +\int \psi \,d\mu_2> tK$
and $\int \varphi \, d\mu_1 \neq \int \varphi \, d\mu_2$.
The same argument as before shows that for any given $\vep,\gamma>0$ there exists a fractal set $F\subset X_{\varphi,f}$ such that
$$
P_{X_{\varphi,f}}(f,\psi,\vep) \ge P_F(f,\psi, \vep) \ge t K-\var(\psi,\vep)-9\gamma,
$$
leading to the conclusion that $P_{X_{\varphi,f}}(f,\psi) \ge K$. Since $K>0$ is arbitrary and $\psi$ is bounded above and below
then
$$P_{X_{\varphi,f}}(f,\psi)=P_{top}(f,\psi) = h_{X_{\varphi,f}}(f) = \htop(f)=+\infty$$
as claimed.

\subsubsection{$F$ is formed by points with historic behavior}

In order to complete the proof of Theorem~\ref{thm:B} it suffices to prove that ${F} \subset X_{\varphi,f}$.

\begin{proposition}\label{finirreg}~~~ $F\subset X_{\varphi, f}$.
\end{proposition}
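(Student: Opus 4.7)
The plan is to show that for every $x\in F$, the Birkhoff averages of $\varphi$ along the orbit of $x$ have at least two distinct accumulation vectors, namely $\int \varphi\, d\mu_1$ and $\int \varphi\, d\mu_2$. Since these differ by the construction in Lemma~\ref{exist-medmu}, this forces $x \in X_{\varphi,f}$. The natural subsequence of times along which to evaluate is $(t_k)_{k\ge 1}$, where $t_k$ is the total shadowing time associated with the level-$k$ construction, and the parity of $k$ determines which invariant measure governs the average at time $t_k$.

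Fix $x\in F$ and take the itinerary representation $\underline{x}=(\underline x_1,\underline x_2,\dots)$ of Remark~\ref{rmunicrepp}, with $\underline x_i=(x_1^i,\dots,x_{N_i}^i)\in S_i^{N_i}$. First I would decompose the Birkhoff sum at time $t_k$ as
\begin{equation*}
\sum_{j=0}^{t_k-1}\varphi(f^j(x))=\sum_{i=1}^{k}\sum_{l=1}^{N_i}\sum_{r=0}^{n_i-1}\varphi\bigl(f^{\alpha_{i,l}+r}(x)\bigr)+R_k(x),
\end{equation*}
where $\alpha_{i,l}$ denotes the starting time of the $l$-th shadowing block of level $i$ and $R_k(x)$ collects the contribution from the transition intervals, whose total length is bounded by $\sum_{i\le k}(N_i-1)m_i+\sum_{i\le k}\underline p_i$, hence by $O\bigl(\sum_{i\le k}N_i m_i\bigr)$. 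Because $z_k(\underline x)$ is shadowed by $x$ within $\vep/2^k$ in the Bowen metric on $[0,t_k-1]$, the inner sums along $x$ differ from the corresponding sums along the reference points $x_l^i\in S_i$ by at most $n_i\,\var(\varphi,\vep/2^i)$ in norm.

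Next I would apply the defining estimates for the blocks. For $i$ odd, $x_l^i\in S_i^1\subset Y_{i,1}$, so the $n_i^1$-time average along $x_l^i$ is within $\zeta_i$ of $\int\varphi\, d\mu_1$; for $i$ even, Lemma~\ref{lemaestimSK2}(3) gives that the $n_i^2$-time average along any $x_l^i\in S_i^2$ is within $\var(\varphi,\vep)+a_i\|\varphi\|_\infty$ of $\int\varphi\, d\mu_2$. Combining these estimates with the previous bookkeeping, one obtains
\begin{equation*}
\Bigl\|\frac{1}{t_k}\sum_{j=0}^{t_k-1}\varphi(f^j(x))-\int\varphi\, d\mu_{\iota(k)}\Bigr\|\le \var(\varphi,\vep)+\zeta_k+a_k\|\varphi\|_\infty+\frac{t_{k-1}+N_k m_k}{t_k}\,\|\varphi\|_\infty,
\end{equation*}
where $\iota(k)=1$ if $k$ is odd and $\iota(k)=2$ if $k$ is even. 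The growth condition \eqref{limdosN} forces $t_{k-1}/t_k\to 0$ and $(N_k m_k)/t_k\to 0$, while $\zeta_k,a_k\to 0$, so the right-hand side tends to $\var(\varphi,\vep)$ as $k\to\infty$.

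Finally, letting $\vep\to 0$ is not actually needed: the two subsequences $(t_{2k})_k$ and $(t_{2k+1})_k$ of Birkhoff averages of $\varphi$ along $x$ accumulate on balls of radius $\var(\varphi,\vep)$ around $\int\varphi\, d\mu_2$ and $\int\varphi\, d\mu_1$ respectively, and choosing $\vep$ small from the outset (smaller than a fraction of $\|\int\varphi\, d\mu_1-\int\varphi\, d\mu_2\|$, which is positive by Lemma~\ref{exist-medmu}) ensures that these two accumulation regions are disjoint. Hence the Birkhoff averages of $x$ do not converge, so $x\in X_{\varphi,f}$. The main subtlety is just the bookkeeping of error terms across the hierarchical construction, but everything is controlled by the defining rates \eqref{limdosN} and the separation of $\int\varphi\, d\mu_1$ from $\int\varphi\, d\mu_2$.
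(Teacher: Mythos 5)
Your proof is correct and follows essentially the same route as the paper: decompose the Birkhoff sum at the times $t_k$, use the hierarchical shadowing estimates to compare with the reference points in $S_k$, absorb the transition intervals and lower-level blocks into an error that vanishes by \eqref{limdosN}, and conclude that the time-$t_k$ averages oscillate between neighborhoods of $\int \varphi\, d\mu_1$ (odd $k$) and $\int \varphi\, d\mu_2$ (even $k$). You are in fact slightly more careful than the paper in noting that the residual error from Lemma~\ref{lemaestimSK2}(3) for even $k$ is of order $\var(\varphi,\vep)$ rather than vanishing with $k$ — which is why $\vep$ must be taken small relative to $\|\int\varphi\, d\mu_1-\int\varphi\, d\mu_2\|$ from the outset — whereas the paper's equation \eqref{exp1-FinI} elides this point, though the conclusion is the same.
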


\begin{proof}

 Let $x \in F$, and set $\chi(k)^{} =1$ if $k$ odd, and $\chi(k)^{} = 2$ otherwise. By Remark~\ref{rmunicrepp} let $y_k^{} := y(\underline{x_k}) \in C_k$ and $z_k^{} = z_k^{}(\underline{x})\in T_k.$
First we prove that points in $C_k$ have time averages close to $\int \varphi d\mu_{\chi(k)}^{}$.
More precisely, we claim that
\begin{eqnarray}\label{exp1-FinI}
\Big\| \frac{1}{c_k^{}} \sum_{j = 0}^{c_k^{}-1}\varphi(f^{j}(y_k^{})) - \int \varphi d\mu_{\chi(k)}^{}\Big\|  ~\to 0~~~\text{as}~~k \to \infty.
\end{eqnarray}

Recalling that $c_k^{}=N_{k}^{} n_{k}^{} + \sum_{i=1}^{N_k^{} - 1} s_i^k$ and
$0\le s_i^k\le m_k^{}$
for every $i$, one can write
\begin{align*}
\Big\|\sum_{j = 0}^{c_k^{}-1} \varphi(f^{j}(y_k^{})) & - c_k^{}\int \varphi d\mu_{\chi(k)}^{}\Big\| \\
	 & \leq \Big\|\sum_{j = 1}^{N_k^{}} \sum_{i = 0}^{n_k^{}-1} \varphi(f^{i+ (j-1)n_k + \sum_{i=1}^{j - 1} s_i^k}(y_k^{})) - n_k^{} N_k\int \varphi d\mu_{\chi(k)}^{}\Big\| \\
	& + 2 m_k^{} (N_k^{} -1) \|\varphi\|_\infty\\
	&\leq \sum_{j = 1}^{N_k^{}}\sum_{i = 0}^{n_k^{}-1} \| \varphi(f^{i+ (j-1)n_k + \sum_{i=1}^{j - 1} s_i^k}(y_k^{})) -  \varphi(f^{i}(x_{j^{}}^{k})) \| \\
	&+ \Big\| \sum_{j = 1}^{N_k^{}} \big[\sum_{j = 0}^{n_k^{}-1} \varphi(f^{i}(x_{j^{}}^{k}))  - n_k^{}\;\int \varphi \, d\mu_{\chi(k)}^{}\big] \Big\| 	
	\\
	& + 2 m_k^{} (N_k^{} -1) \|\varphi\|_\infty\\
&\leq N_k^{} n_k^{}\Big(\var(\varphi, \frac{\vep}{2^k}) + \zeta_k^{} \Big) + 2 m_k^{} (N_k^{} -1) \|\varphi\|_\infty.
\end{align*}

Using that $\lim_{k\to \infty} \frac{n_k^{} N_k^{}}{c_k^{}}=1$ and $\lim_{k\to \infty} \frac{m_k^{} N_k^{}}{c_k^{}}=0$ we conclude that
\begin{eqnarray*}
\Big\|\frac{1}{c_k^{}}\sum_{j = 0}^{c_k^{}-1} \varphi(f^{j}(y_k^{})) - \int \varphi d\mu_{\chi(k)}^{}\Big\|
	\leq \frac{N_k^{} n_k^{}}{c_k^{}}\Big(\var(\varphi, \frac{\vep}{2^k}) +\zeta_k^{}\Big)
	+ \frac{2 m_k^{} (N_k^{} -1) }{c_k^{}} \|\varphi\|_\infty
\end{eqnarray*}
tends to zero as $k\to\infty$, which proves the claim.

\medskip
Now, take any point $x\in F$. By definition for every $k\ge 1$ there exists $z_k=z(z_{k-1},y_k)\in T_k$ so that $d_{t_k^{}}^{}(x, z_k^{})\le \frac{\vep}{2^{k}}$. Using that $t_k=c_k+ \underline p_k+t_{k-1}$ and triangular inequality we get
$$
d_{c_k^{}}^{}(f^{t_k^{}-c_k^{}}(x), y_k^{})
	\leq d_{t_k^{}}^{}(f^{t_k^{}-c_k^{}}(x), f^{t_k^{}-c_k^{}}(z_k^{}))
	+d_{c_k^{}}^{}(f^{t_k^{}-c_k^{}}(z_k^{}), y_k^{})
	< \frac{\vep}{2^{k-1}}.
$$
In particular, 
\begin{align*}
\Big\|\frac{1}{c_k^{}}\sum_{j = 0}^{c_k^{}-1} & \varphi(f^{t_k^{}-c_k^{}+j}(p))  - \int \varphi d\mu_{\chi(k)}^{}\Big\|\\
	& \leq \var\Big(\varphi, \frac{\vep}{2^{k-1}}\Big)
	+ \Big\|\frac{1}{c_k^{}}\sum_{j = 0}^{c_k^{}-1} \varphi(f^{j}(y_k^{})) - \int \varphi d\mu_{\chi(k)}^{}\Big\|
\end{align*}
tends to zero as $k\to\infty$.
Using that $\lim_{k \to \infty} \frac{c_k^{}}{t_k^{}}=1$ and dividing the $t_k$-time average in their first $t_k-c_k$ summands and
the second $c_k$ summands, a simple computation shows
\begin{align*}
\Big\|\frac{1}{t_k^{}}\sum_{j = 0}^{t_k^{}-1} \varphi(f^{j}(x)) & - \frac{1}{c_k^{}}\sum_{j = 0}^{c_k^{}-1} \varphi(f^{j+t_k^{}-c_k^{}}(x)) \Big\|
	\leq2\frac{t_k^{} - c_k^{}}{t_k^{}} \|\varphi\|_\infty \to 0
\end{align*}
as $k\to\infty$. Altogether we get that
$
\lim_{k\to \infty}~\big\|\frac{1}{t_k^{}}\sum_{j = 0}^{t_k^{}-1} \varphi(f^{j}(p)) - \int \varphi d\mu_{\chi(k)}^{} \big\| = 0,
$
which proves the proposition.
\end{proof}

\subsubsection{Proof of Theorem \ref{thm:B}}

We will consider the case of topological entropy and metric mean dimension, as the argument that proves that the historic set carries full topological pressure is completely analogous. We note that
$$
0\le \underline{mdim}(f) \le \overline{mdim}(f) \le \htop(f) \le +\infty
$$
and that $\overline{mdim}(f)=0$ whenever $\htop(f) < +\infty$. For that reason we distinguish the following cases:

\medskip
\noindent \emph{Case 1:} $0=mdim(f) < \htop(f) < +\infty$.
\smallskip

It is immediate that $\text{mdim}_{X_{\varphi, f}}^{} (f) = \text{mdim} (f)=0$. It remains to prove that
$h_{X_{\varphi, f}}(f)=\htop(f)$.  
Let $\gamma,\vep>0$ be arbitrary and small and let $t\in (0,1)$ be close to one, given by Lemma~\ref{exist-medmu}
(when $\psi\equiv 0$).
Corollary \ref{lem-muB2} ensures that $(\mu_k)_k$ satisfies the hypothesis
of Proposition~\ref{exiseq-s} with $s= t\;h_{top}^{}(f, 2\vep) - 9\gamma$ and $K=1$. Since $F\subset X_{\varphi,f}$ then
Proposition~\ref{exiseq-s}  ensures that
$$
h_{X_{\varphi,f}}(f,\vep)
	\ge h_{F}(f, \vep)
	\geq t\; h_{top}^{}(f, 2\vep) - 9\gamma.
$$
Using that $\gamma>0$ is arbitrary and that $t\to 1$ as $\gamma\to 0$  we conclude that
\begin{equation}\label{eq:finalent}
h_{X_{\varphi,f}}(f,\vep)
	\geq h_{top}^{}(f, 2\vep).
\end{equation}
Taking the limit as $\vep\to 0$ we get the desired equality  $h_{X_{\varphi,f}}(f) = h_{top}^{}(f)$.

\medskip
\noindent \emph{Case 2:} $0\le \underline{mdim}(f) \le \overline{mdim}(f) < \htop(f) = +\infty$.
\smallskip

The argument which ensures that $h_{X_{\varphi,f}}(f) = +\infty$ was explained at the end of Subsection~\ref{subsec:entropy}.
We are left to prove that
$\underline{\text{mdim}}_{X_{\varphi, f}}^{} (f) = \underline{\text{mdim}} (f)$ and
$\overline{\text{mdim}}_{X_{\varphi, f}}^{} (f) = \overline{\text{mdim}} (f)$.
This is now immediate because inequality \eqref{eq:finalent} guarantees that
$$
\frac{ h_{top}^{}(f, \vep) }{-\log\vep}
	\ge 
	\frac{h_{X_{\varphi,f}}(f, \vep)}{-\log\vep}
	\geq \frac{ h_{top}^{}(f, 2\vep) }{-\log\vep}
$$
for all 
$\vep>0$.
 This proves the theorem.
\subsection{Proof of Proposition~\ref{p:A}}

Let $X$ be a compact Riemannian manifold. Theorem~1 in \cite{PPSS} ensures that there exists 
$C^0$-Baire residual subset $\mathfrak{R} \subset \text{Homeo}(X)$ such that every $f\in \mathfrak{R}$
has infinitely many periodic points of some finite period (actually such  periodic points are uncountable, 
cf. pp 246 in \cite{PPSS}).

Fix $f\in \mathfrak{R}$ and let $n=n(f)\ge 1$ be such that the set of periodic points $Per_n(f)$ is infinite.
Choose a sequence $(p_i)_{i\ge 1}$ of points in $Per_n(f)$ that generate pairwise disjoint periodic orbits.
Then $E_f:=\{ \varphi \in C^0(X,\mathbb R^d)  \colon \mu \mapsto \int \varphi\, d\mu \; \text{is constant} \}$
is contained in the countable intersection
\begin{align*}
\bigcap_{\substack{ (i,j) \in \mathbb N  \times \mathbb N \\ i \neq j}}
		\Big\{ \varphi \in C^0(X,\mathbb R^d) \colon  \frac1n \sum_{s=0}^{n-1} \varphi (f^s(p_i))
		= \frac1n \sum_{s=0}^{n-1} \varphi (f^s(p_j)) \Big\}
\end{align*}
of $C^0$-closed sets with empty interior. 
The set $\mathfrak{R}_f= C^0(X,\mathbb R^d) \setminus E_f$ satisfies the requirements of the proposition.

\subsection{Proof of Corollary~\ref{cor:A}}

The proof of the corollary relies on the genericity of the gluing orbit property on isolated chain recurrent classes of
 the non-wandering set.
Let $\widetilde{\mathcal{R}_0}$ be as in the proof of Corollary \ref{lem1} and let $\mathfrak{R} \subset \text{Homeo}(X)$
and $\mathfrak{R}_f \subset C^0(X,\mathbb R^d)$, $f\in \mathfrak{R}$, be given by Proposition~\ref{p:A}. 
Notice that $\widetilde{\mathcal{R}_0} \cap \mathfrak{R}\subset \text{Homeo}(X)$ 
and 
$$
\widehat{\mathfrak R}:=\bigcup_{f\in \widetilde{\mathcal R_0} \cap \mathfrak R} 
		\{f\} \times \mathfrak{R}_f \subset \text{Homeo}(X) \times C^0(X,\mathbb R^d)
$$
are $C^0$-Baire generic subsets.

Fix $f\in \widetilde{\mathcal{R}_0} \cap \mathfrak{R}$. 
If $\Gamma\subset CR(f)$ is a isolated chain recurrent class then $f\mid_\Gamma$ satisfies the gluing orbit property
(cf. Corollary \ref{lem1}). Hence, Theorem~\ref{thm:B} (applied to the map $f\mid_\Gamma$) implies that 
$\htop(f\mid_{\Gamma\cap X_{\varphi,f}})=\htop(f\mid_\Gamma)$ for any $\varphi\in C^0(X,\mathbb R^d)$ such that 
$\Gamma\cap X_{\varphi,f} \neq \emptyset$. This proves item (1) is satisfied 
by pairs $(f,\varphi)\in \widehat{\mathfrak R}$.

Now, take $f\in \widetilde{\mathcal{R}_0} \cap \mathfrak{R}$ and assume that $CR(f)=X$.
Since $f\in \widetilde{\mathcal{R}_0}$ then the set of periodic points is dense in $CR(f)=\Omega(f)=X$. 
Hence, using Corollary \ref{lem1} once more, $f$ satisfies the gluing orbit property. 
Moreover,  $X_{\varphi,f}\neq \emptyset$ 
for every $(f,\varphi) \in \widetilde{\mathfrak R}$ (by Lemma~\ref{obsat} and Proposition~\ref{p:A}).
Item (2) in the corollary follows also as a consequence of Theorem~\ref{thm:B}. 
This completes the proof of the corollary.

\section{Some comments and further questions}\label{sec:questions}

To finish we will make some comments on related concepts and future perspectives. First, the general concept of multifractal analysis is to decompose the phase space  in subsets of points which have a similar dynamical behavior and to describe the size of each of such subsets from the geometrical or  topological viewpoint. We refer the reader to the introduction of \cite{Olsen} and references therein for a great historical account. The study of the topological pressure or Hausdorff dimension of the level and the irregular sets can be traced back to Besicovitch. Such a multifractal analysis program has been carried out successfully to deal with self-similar measures and Birkhoff averages~\cite{Olsen,OlsenWinter,PW97,ZC}, among other applications. We expect our methods to be applied in other related problems as the multifractal analysis of level sets
for Birkhoff averages.

A different question that can be endorsed concerns the concept of localized entropy. In \cite{14KW}, studied the directional $H(v)$
entropy (in the direction of a rotation vector $v$) introduced in \cite{Jenkinson} (we refer the reader to \cite{Jenkinson, 14KW}
for the definition).  They prove that,
if the localized entropy satisfies some mild continuity assumptions, the localized entropy
associated to locally maximal invariant set  of $C^{1+\alpha}$-diffeomorphisms is entirely determined by the exponential
growth rate of periodic orbits whose rotation vectors are sufficiently close to $v$
(cf. \cite[Theorem 5]{14KW} for the precise statement).
While it is not hard to check that any fixed rotation vector $v$ there exist points whose pointwise rotation
set coincides with $v$ in the case of maps with the gluing orbit property, we expect that the inequality
$H(v) \le \lim_{\vep\to 0} \limsup_{n\to\infty} \frac1n \log \# Per(v,n,\vep)$
holds.

One different question concerns the Hopf ratio ergodic theorem. More precisely, although we did not pursue this here, it is
most likely that our results can describe the set of points with historic behavior for quotients of Birkhoff sums in the spirit of \cite{BLV,TMP}, with possible applications to the case of suspension flows over continuous maps with the gluing orbit property, considered in \cite{BV}.

Theorems~\ref{thm:D} and ~\ref{thm:E}, dealing with isolated chain recurrent classes, 
can be thought as a first step in the understanding of rotation sets for $C^0$-generic
homeomorphisms isotopic to the identity on the torus. While the dynamics of topologically generic homeomorphisms
is rather complex \cite{AHK}, the general picture still remains out of reach. A natural question which could contribute to the
understanding of the global picture is wether all chain recurrent classes of generic homeomorphisms satisfy the gluing orbit
property. 

Finally, the convexity of the rotation set played a key role on the rotation theory for homeomorphisms on the $2$-torus. Hence, we expect Theorem~\ref{thm:E} to contribute for the development of the rotation theory for generic conservative homeomorphisms on tori. In particular, taking into account \cite{P14}, an interesting open question is wether the rotation set of a $C^0$-generic homeomorphisms on $\mathbb T^d$ homotopic to the identity is a rational polyhedron. This has been announced recently in 
\cite{BLV}.

\subsection*{Acknowledgements} This work is part of the PhD thesis of the first author at UFBA and it was partially supported by
CNPq-Brazil and CAPES-Brazil. It is a pleasure to thank  A. Koropecki and F. Tal for many useful comments,
and to {P.~Oprocha} for a discussion that led us to consider isolated chain recurrent classes in Theorem~\ref{thm:D}.

\end{document}